\documentclass{amsart}

\usepackage[top=4cm, bottom=3cm,left=2.5cm,right=2.5cm]{geometry}
\usepackage[font=small,format=plain,labelfont=bf,up,textfont=it,up]{caption}
\usepackage{amsthm,amsmath,amsfonts,amssymb,array,enumerate,verbatim}
\usepackage[all]{xy}
\usepackage{url}
\usepackage{amsmath}
\usepackage{amssymb}
\usepackage{amsthm}
\usepackage{mathtools}
\usepackage{latexsym}
\usepackage{centernot}
\usepackage{bbm}
\usepackage{braket}

\usepackage{hyperref}

\usepackage{enumerate} 
\usepackage{color}
\usepackage{mathrsfs}
\usepackage{tikz}
\usetikzlibrary{matrix,arrows,decorations.pathmorphing}
\usepackage{tikz-cd}

\DeclareMathOperator{\Hom}{Hom}

\DeclareMathOperator{\Ind}{Ind}
\DeclareMathOperator{\cInd}{c-Ind}
\DeclareMathOperator{\End}{End}

\DeclareMathOperator{\Gal}{Gal}
\DeclareMathOperator{\GL}{GL}
\DeclareMathOperator{\id}{id}
\newcommand{\abs}[1]{\left|#1\right|}    
\DeclareMathOperator{\rk}{rk}

\newtheorem{theorem}{Theorem}[section]
\newtheorem{lemma}[theorem]{Lemma}
\newtheorem{proposition}[theorem]{Proposition}

\newtheorem{definition}[theorem]{Definition}
\newtheorem*{remark}{Remark}
\newtheorem{example}[theorem]{Example}

\begin{document}

\title{The Doubling Method in Algebraic Families}
\author{Johannes Girsch}
\address{Departement of Mathematics\\ Imperial College \\ London SW7 2AZ \\ United Kingdom}
\email{johannes.girsch@live.de}

\begin{abstract}

We define the doubling zeta integral for smooth families of representations of classical groups. Following this we prove a rationality result for these zeta integrals and show that they satisfy a functional equation. Moreover, we show that there exists an apropriate normalizing factor which allows us to construct $\gamma$-factors for smooth families out of the functional equation. We prove that under certain hypothesis, specializing this $\gamma$-factor at a point of the family yields the $\gamma$-factor defined by Piateski-Shapiro and Rallis.
\end{abstract}

\maketitle

\section{Introduction}

We fix a prime number $p$ and a finite extension $F$ of $\mathbb Q_p$ whose residue field has cardinality $q$. An important topic in the representation theory of reductive $p$-adic groups over $\mathbb C$ is the definition of local constants, which is also pivotal in the formulation of the local Langlands correspondence. In recent years there has been progress in defining such local constants in the modular setting. For example, Minguez and Kurinczuk-Matringe defined $\ell$-modular $L$-functions via the theory of Godement-Jacquet integrals in \cite{minguez2012fonctions}, respectively the Rankin-Selberg method in \cite{kurinczuk2017rankin}. Moss observed in \cite{moss2016interpolating} that $L$-functions do not seem to behave well for representations with coefficients in more general noetherian rings. However, he was able to define Rankin-Selberg zeta integrals and gamma factors for representations (over general noetherian rings) which are "Co-Whittaker" (c.f. \cite{helm2016whittaker}) and proved that they interpolate the classical constants over $\mathbb C$. In subsequent work (\cite{moss2015gamma}) he was able to extend this to associate gamma factors to representations of the pair $\GL_n\times\GL_m$ in this general setting and proved, together with Helm, a converse theorem which turned out to be crucial for the proof of local Langlands conjecture in families for $\GL_n$ in \cite{helm2018converse}.\\
So far all the constructions mentioned above have only been worked out for $\GL_n(F)$ or inner forms thereof. Recently Helm and his collaborators switched focus to investigate the local Langlands correspondence in families for reductive groups other than $\GL_n$, and a theory of gamma factors in families for such representations would be a favourable thing. In this article we develop the doubling method of Piateski-Shapiro and Rallis (\cite{piatetski1986varepsilon},\cite{piatetski1987functions}) over general noetherian coefficient rings and a wide class of classical groups. Moreover, this approach, as the doubling method over $\mathbb C$, is independent of any theory of Whittaker models, which is in contrast to the work of Moss.

We give a brief, but incomplete, description of the doubling method for representations over $\mathbb C$ following \cite{lapid2005local}. Roughly, if $G$ is the isometry group of a vector space over $F$ equipped with a nondegenerate bilinear form one constructs a group $G^\Box$ which admits an embedding $G\times G\hookrightarrow G^\Box$ and satisfies some additional favourable properties. One has a natural maximal parabolic $P$ of $G^\Box$ together with an $F^\times$-valued character $\Delta$ of $P$. For a character $\omega$ of $F^\times$ and a complex parameter $s\in\mathbb C$ set $\omega_s\coloneqq\omega\cdot\abs{.}^s_F$ and let $I(s,\omega)\coloneqq i_P^{G^\Box}(\omega_s\circ\Delta)$. For $f\in I(0,\omega)$ and $s\in\mathbb C$ we define $f_s\in I(s,\omega)$ to be $f\cdot\abs{\Delta}_F^s$. Let $(\pi,V)$ be a smooth irreducible representation of $G$ over $\mathbb C$. Then for $v\in\pi$ and $\varphi$ in the contragredient $\widetilde{\pi}$ one defines the zeta integral
$$Z(f_s,v,\varphi)\coloneqq\int_{G}f_s(g,1)\varphi(\pi(g)v)\mathrm dg.$$
The following theorem holds.
\begin{theorem}[Theorem 4.1 in \cite{yamana2014functions}]
	The above defined integral converges for $s\in\mathbb C$ with large enough real part and is an element of  $\mathbb C(q^{-s})$. Moreover, for a certain intertwining operator $M(s)\colon I(s,\omega)\to I(-s,\omega^{-1})$ the functional equation
	$$Z(M(s)f_s,v,\varphi)=\Gamma(s,\pi,\omega)Z(f_s,v,\varphi)$$
	holds for an appropriate scalar-valued function $\Gamma(s,\pi,\omega)$.
\end{theorem}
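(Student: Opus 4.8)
The plan is to prove the three assertions --- absolute convergence for $\operatorname{Re}(s)\gg 0$, rationality in $q^{-s}$, and the functional equation --- in that order, the conceptual engine throughout being a uniqueness (``multiplicity one'') statement for the doubling functional. For convergence I would use gauge estimates on $G$. Writing $g\in G$ via the Iwasawa decomposition of $G^\Box$ with respect to $P$, a standard section $f\in I(0,\omega)$ satisfies $|f_s(g,1)|\ll_{\operatorname{Re}(s)}\Xi_G(g)^{a}\,\sigma(g)^{b}$ for suitable $a,b$ depending on $\operatorname{Re}(s)$, where $\Xi_G$ is the Harish-Chandra function and $\sigma$ the standard length function on $G$; on the other hand the matrix coefficient $g\mapsto\varphi(\pi(g)v)$ is dominated by a fixed gauge on $G$ (a power of $\Xi_G$ times a power of $\sigma$). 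Substituting both into the Cartan decomposition $G=KA^{+}K$ and comparing against the convergence of $\int_G\Xi_G(g)^{2+\epsilon}\,dg$ yields a half-plane $\operatorname{Re}(s)>s_0$ of absolute, locally uniform convergence, on which $Z(f_s,v,\varphi)$ is visibly holomorphic in $q^{-s}$.

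For rationality I would regard, for fixed $v,\varphi$, the assignment $s\mapsto\big(f\mapsto Z(f_s,v,\varphi)\big)$ as a holomorphic family of $(G\times G)$-invariant trilinear forms on $I(s,\omega)\times\pi\times\widetilde\pi$ over the region of convergence, and extend it. The key input is the uniqueness statement that this space of invariant trilinear forms is at most one-dimensional for generic $s$; this is proved by a Mackey-theoretic analysis of the $(G\times G)$-orbits on $P\backslash G^\Box$, arranged so that only the unique open orbit supports a (generically) one-dimensional space of invariant functionals while every boundary orbit contributes nothing for generic $s$. Granting this, Bernstein's rationality principle --- meromorphic continuation of a holomorphic solution of a linear system whose coefficients are polynomials in $q^{s}$ and $q^{-s}$ --- promotes $Z(f_s,v,\varphi)$ to a rational function of $q^{-s}$. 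Alternatively, and more explicitly, one can substitute the asymptotic expansion of the matrix coefficient along the parabolic relevant to the doubling embedding together with the explicit support of $f_s$ in the Iwasawa decomposition, which reduces the integral over $G$ to a finite sum of geometric series in $q^{-s}$, again giving rationality.

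For the functional equation, recall that $M(s)$ is the standard intertwining operator attached to the maximal parabolic $P\subset G^\Box$; by the general theory of such operators (meromorphic continuation via the same circle of gauge estimates) it is rational in $q^{-s}$ and intertwines $I(s,\omega)$ with $I(-s,\omega^{-1})$. Hence $f\mapsto Z(M(s)f_s,v,\varphi)$ is a second rational family of $(G\times G)$-invariant trilinear forms on $I(s,\omega)\times\pi\times\widetilde\pi$, so by the one-dimensionality above the two families are proportional, $Z(M(s)f_s,v,\varphi)=\Gamma(s,\pi,\omega)\,Z(f_s,v,\varphi)$, with $\Gamma(s,\pi,\omega)$ a ratio of rational functions and hence itself in $\mathbb{C}(q^{-s})$; that it is a single scalar independent of $f,v,\varphi$ is exactly the uniqueness, and one must separately check that $Z(f_s,v,\varphi)\not\equiv 0$ for suitable data --- done by exhibiting $f,v,\varphi$ for which the leading term of the convergent integral is nonzero --- so that the proportionality constant is well defined. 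I expect the main obstacle to be precisely the uniqueness/multiplicity-one input: classifying the $(G\times G)$-orbits on $P\backslash G^\Box$ and showing the non-open ones carry no invariant trilinear form for generic $s$ is a delicate, case-by-case orbit computation depending on the type of bilinear or Hermitian form defining $G$, and it is this geometric lemma, not the analytic estimates, that carries the weight of the argument.
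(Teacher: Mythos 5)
Your proposal is correct and follows essentially the same route the paper itself sketches for this cited result (Theorem 4.1 of Yamana, after Piatetski-Shapiro--Rallis): convergence via gauge estimates, a multiplicity-one statement for $\Hom_{G\times G}(I(s,\omega),\widetilde{\pi}\otimes(\pi\otimes\omega))$ obtained from the $(G\times G)$-orbit analysis on $P\backslash G^\Box$, and Bernstein's continuation principle to deduce rationality and the functional equation. The paper does not reprove this theorem but uses exactly this outline as motivation for its algebraic-families analogue.
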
\noindent
By using $\Gamma(s,\pi,\omega)$ and a normalized version of the intertwining operator $M^*(s)$ (which depends on the choice of an additive character $\psi\colon F\to\mathbb C^\times$) one can then define the gamma factor $\gamma(s,\pi\times\omega,\psi)$ associated to the pair $\pi\times\omega$.\\
In this classical setting over $\mathbb C$, the standard proof of the above theorems is roughly along the following lines. First, one shows a certain multiplicity one statement, namely that $\Hom_{G\times G}(I(\omega,s),\widetilde{\pi}\otimes(\pi\otimes\omega))$ is a one dimensional $\mathbb C$-vector space for all but finitely many complex numbers $s$. Then one proves that the doubling zeta integral converges for $\operatorname{Re}(s)$ large enough, in which case the doubling zeta integral defines an element of the aformentioned $\Hom$-space. From this, Bernstein's continuation principle (\cite{banks1998corollary}) yields the rationality and the functional equation of the doubling zeta integral.\\
We now state the main results of this article. For notational convenience we assume that $G=\operatorname{Sp}_n(F)$. Let $A$ be a noetherian $\mathbb Z[1/p]$-algebra and $S$ the multiplicative subset of $A[X,X^{-1}]$ consisting of Laurent polynomials whose leading and tailing coefficient is a unit.
\begin{theorem}[Rationality, Theorem \ref{theorat}]\label{intrat}
Let $(\pi,V)$ be a smooth, admissible, $G$-finite $A[G]$-module and $\omega\colon F^\times\to A$ a smooth character. Then for all $f\in I(X,\omega)$ (which is the analogue of $I(s,\omega)$) and $v\in V,\varphi\in\widetilde{V}$ the doubling zeta integral $$Z(X,\varphi,f)\coloneqq\int_Gf(g,1)\varphi(\pi(g)v)dg$$ yields an element of $S^{-1}A[X,X^{-1}]$. 
\end{theorem}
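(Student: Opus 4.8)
The plan is to adapt Moss's strategy for Rankin--Selberg integrals in families (\cite{moss2016interpolating}) to the doubling setting, using the geometry of $P\backslash G^\Box$ in place of Bruhat theory for $\GL_n\times\GL_m$. Since there is no topology on $A$, the first task is to interpret $Z(X,\varphi,f)$ as a genuine formal Laurent series in $X$ whose coefficients are well-defined elements of $A$. I would do this by decomposing $G^\Box$ into the finitely many $P$--$(G\times 1)$ double cosets $P\delta_i(G\times 1)$ (this finiteness being part of the doubling set-up recalled in the earlier sections), which decomposes the integral over $G$ into finitely many pieces. On the $i$-th piece the integral unfolds to an integral over $H_i\backslash G$, where $H_i=\delta_i^{-1}P\delta_i\cap(G\times 1)$: for the open double coset $H_i$ is a small (unipotent-by-reductive) subgroup, while for the closed ones $H_i$ is larger, up to a parabolic of $G$, in which case the integrand factors through a Jacquet module of $\pi$. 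In each case I would apply the Iwasawa decomposition $G=P_0K$ with $P_0=M_0N_0$ a minimal parabolic and $K$ a good maximal compact: smoothness of $f,v,\varphi$ collapses the $K$-integral to a finite sum, the integral over $N_0$ (or over the unipotent directions transverse to $H_i$) is a finite sum because the relevant section is compactly supported modulo $P$, and one is reduced to a sum over the lattice $\Lambda=M_0/M_0^1$ of the form $\sum_{\mu\in\Lambda}c_\mu\,X^{n(\mu)}$, where $n(\mu)\in\ZZ$ records the valuation of $\Delta$ along $\mu$ and each $c_\mu\in A$ is a finite sum of products of values of $f$ and of the matrix coefficient $g\mapsto\varphi(\pi(g)v)$.

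The second task is to show that this formal series is rational with denominator in $S$. The mechanism is that $c_\mu$ is supported in a cone: restricting $g\mapsto\varphi(\pi(g)v)$ to the negative cone $\Lambda^-$, it eventually coincides with the canonical pairing $\langle\mu\cdot\bar v,\bar\varphi\rangle$ of the Jacquet module $V_{N_0}$ against $(\widetilde V)_{\overline{N_0}}\cong\widetilde{V_{N_0}}$. Here admissibility of $\pi$ guarantees that $V_{N_0}$ is a finitely generated $A$-module, $G$-finiteness propagates to the Jacquet modules that occur (including those along the intermediate parabolics coming from the closed double cosets), and the $M_0$-action on these modules governs the values $c_\mu$ for $\mu$ deep in the cone. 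Consequently, after grouping terms according to the $A[M_0/M_0^1]$-module structure of $V_{N_0}$, the series $\sum_\mu c_\mu X^{n(\mu)}$ becomes a finite $A$-linear combination of geometric-type series $\sum_{\mu\in\Lambda^+}(\text{value on }V_{N_0})\,X^{n(\mu)}$, each of which is the expansion of a rational function whose denominator is a product of factors $1-u_jX^{k_j}$ with $k_j\ge 1$ and $u_j\in A^\times$; the units $u_j$ arise because $\omega$ and the modulus characters are $A^\times$-valued and the relevant $M_0$-action is by automorphisms of a finitely generated $A$-module. Such denominators lie in $S$, so $Z(X,\varphi,f)\in S^{-1}A[X,X^{-1}]$. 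A more robust way to phrase the last step: the zeta integrals, as $f$ ranges over $I(X,\omega)$ and $v,\varphi$ over $V,\widetilde V$, span an $A[X,X^{-1}]$-submodule of the module of formal Laurent series that is finitely generated (using $A[X,X^{-1}][G^\Box]$-finiteness of $I(X,\omega)$ and $A[G]$-finiteness of $\pi$ and $\widetilde\pi$), and a finitely generated, cone-supported submodule of $A((X))$ over the noetherian ring $A[X,X^{-1}]$ is necessarily contained in $S^{-1}A[X,X^{-1}]$.

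The main obstacle is exactly the passage from the complex-analytic inputs of the classical proof --- absolute convergence of the doubling integral for $\operatorname{Re}(s)\gg 0$ and Bernstein's meromorphic continuation principle (\cite{banks1998corollary}), which underpin the rationality in Yamana's theorem --- to purely algebraic finiteness statements valid over an arbitrary noetherian $\ZZ[1/p]$-algebra. Concretely, the real work is in establishing a version of Casselman's asymptotic expansion of matrix coefficients (equivalently, the canonical pairing between Jacquet modules) that holds in families, and in checking that the "eigenvalues'' controlling the cone are units in $A$, so that the denominators genuinely land in $S$ rather than in some larger multiplicative set. Everything else --- the orbit decomposition of $P\backslash G^\Box$, the Iwasawa reduction, and the bookkeeping of $X$-powers --- should be a routine, if lengthy, adaptation of the classical arguments.
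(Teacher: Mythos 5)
Your plan is in the spirit of the classical analytic argument (unfold the integral, apply Iwasawa, control the tail by Jacquet--module asymptotics), but the paper's proof takes a genuinely different, more module-theoretic route, and there are concrete errors in your plan.

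The paper filters the induced representation $I(X,\omega)$ by support on the finitely many $(G\times G)$-orbits $\Omega_j=P\delta_j(G\times G)$ of $P\backslash G^\Box$ (indexed by $0\le j\le r$, the Witt index), shows $Z(X,\_)$ is a Laurent polynomial on $M(\pi)\otimes I(X,\omega)^{(0)}$, and then proceeds inductively: it identifies the successive-quotient module $M(\pi)^\kappa\otimes_{A[G\times G]}\bigl(I(X,\omega)^{(j)}/I(X,\omega)^{(j-1)}\bigr)$ with $M(\pi)^\kappa_{N_j\times N_j}\otimes_{A[M_j\times M_j]}C_j(X,\omega)$, where $C_j(X,\omega)$ is an explicit compact induction with a central character, and then annihilates it by an element of $S$. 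The key mechanism is \emph{not} a Casselman-style asymptotic expansion: a central element $z_1\in M_j\times M_j$ acts on $C_j(X,\omega)$ by a unit times a monomial $X^{rj}$, while on the Jacquet module it satisfies a monic polynomial over $A$ because $\operatorname{End}_{M_j}(V_{N_j})$ is a finitely generated $A$-module (this is where admissibility, $G$-finiteness, and admissibility of Jacquet modules enter). Those two facts combine to produce a Laurent polynomial in $S$ annihilating the module, and Lemma~\ref{fund} finishes the induction. This sidesteps the need for a families version of Casselman's asymptotics, which you correctly flag as the hard part of your approach.

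Beyond the strategic difference, two steps in your proposal are wrong as stated. First, the double-coset space $P\backslash G^\Box/(G\times 1)$ is \emph{not} finite; what is finite is $P\backslash G^\Box/(G\times G)$. In fact the entire domain $G\times 1$ lies in the single open orbit $\Omega_0 = P(G\times G)=P(G\times 1)$, so the decomposition of the integral you propose has only one nontrivial piece and does not expose the Jacquet modules in the way you hope. (The filtration in the paper is a filtration of the space of functions $f$ by support, not a partition of the domain of integration.) Second, the fallback claim that a finitely generated, cone-supported $A[X,X^{-1}]$-submodule of $A[[X]][X^{-1}]$ must lie in $S^{-1}A[X,X^{-1}]$ is false: a cyclic submodule generated by an arbitrary Laurent series need not be rational, let alone have denominator in $S$. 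Rationality must be extracted from an actual annihilation statement, which is precisely what the paper supplies via the central element of the Levi.
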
\noindent
Since convergence does not make sense in our setting and we do not have an analogue of Bernstein's continuation principle we need a different approach. However, our proof of rationality is similar to and motivated by the proof of the aformentioned multiplicity one theorem. Namely, we show the above result by using a filtration on the space $I(X,\omega)$  which then gives rise to a filtration on $I(X,\omega)\otimes(V\otimes\widetilde{V})$. The filtration arises, similarly to the geometric Lemma, from the action of $G\times G$ on the flag variety $P\backslash G^\Box$ and then via support of elements in $I(X,\omega)$ on different orbits. We show that the successive quotients are annihilated by Laurent polynomials in $S$ which then implies the above result.

We now state our version of the functional equation and give an idea of the proof. 
\begin{theorem}[Functional equation, Theorem \ref{functionaltheorem}]\label{intfunc}
	Let $(\pi,V)$ be a smooth, admissible, $G$-finite $A[G]$-module and $\omega\colon F^\times\to A$ a smooth character. Suppose that the natural map $V\otimes\widetilde{V}\to A$ is surjective and that $\Hom_{G\times G}(V\otimes\widetilde{V},A)\cong A$. There exists an intertwining operator $M_X$ and an element $\Gamma(X,\pi,\omega)\in S^{-1}A[X,X^{-1}]$ such that the functional equation
	$$Z(X^{-1},\varphi,M_X(f))=\Gamma(X,\pi,\omega)Z(X,\varphi,f)$$
	holds for all $\varphi\in V\otimes\widetilde{V}$ and $f\in I(X,\omega)$.
\end{theorem}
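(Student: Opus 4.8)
The plan is to recast both sides of the asserted identity as elements of a single $S^{-1}A[X,X^{-1}]$-module of $(G\times G)$-invariant bilinear forms, and then to prove that this module is free of rank one with the zeta integral as a basis. Write $R\coloneqq S^{-1}A[X,X^{-1}]$ and consider
\[
\mathcal{B}(X)\coloneqq\Hom_{R[G\times G]}\!\big(R\otimes_{A[X,X^{-1}]}I(X,\omega)\otimes_A(V\otimes\widetilde{V}),R\big),
\]
where $G\times G$ acts on $I(X,\omega)$ through the doubling embedding $G\times G\hookrightarrow G^\Box$, on $V\otimes\widetilde{V}$ through $\pi$ and $\widetilde{\pi}$, and trivially on $R$. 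A change-of-variables computation, using that the character $\Delta$ of $P$ and the modulus character of $P$ are both trivial on the diagonal copy $\Delta G$ (the key point being that $\operatorname{Sp}_n$ has trivial determinant, so the twist defining $I(X,\omega)$ disappears on the open cell), shows that $f\otimes\varphi\mapsto Z(X,\varphi,f)$ is $(G\times G)$-invariant; by Theorem \ref{intrat} it is $R$-valued, so it is an element of $\mathcal{B}(X)$.

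Next I would construct the intertwining operator. Running the filtration argument of Theorem \ref{intrat} for the $G^\Box$-action on the double cosets $P\backslash G^\Box/P$ in place of the flag variety $P\backslash G^\Box$, one shows that $\Hom_{R[G^\Box]}\big(R\otimes I(X,\omega),R\otimes I(X^{-1},\omega^{-1})\big)$ is a free $R$-module of rank one; let $M_X$ be a generator. Since $M_X$ is $G^\Box$-equivariant it is in particular $(G\times G)$-equivariant, so $f\otimes\varphi\mapsto Z(X^{-1},\varphi,M_X(f))$ — the pullback along $M_X$ of the corresponding invariant form for the parameter $X^{-1}$ and character $\omega^{-1}$ — also lies in $\mathcal{B}(X)$. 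Consequently the functional equation reduces to the two assertions that $\mathcal{B}(X)$ is free of rank one over $R$ and that $Z(X,\cdot,\cdot)$ is a basis of it: granting these, $Z(X^{-1},\cdot,M_X(\cdot))=\Gamma(X,\pi,\omega)\cdot Z(X,\cdot,\cdot)$ for a unique $\Gamma(X,\pi,\omega)\in R=S^{-1}A[X,X^{-1}]$.

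To compute $\mathcal{B}(X)$ I would use the $(G\times G)$-orbit filtration of $\Res_{G\times G}I(X,\omega)$ induced by the $(G\times G)$-action on $P\backslash G^\Box$, exactly as in the proof of Theorem \ref{intrat}: there is a dense open orbit with stabilizer the diagonal $\Delta G$, together with finitely many lower-dimensional orbits. Tensoring with $V\otimes\widetilde{V}$ and applying $\Hom_{G\times G}(-,R)$ produces a finite filtration of $\mathcal{B}(X)$. For each non-open orbit the stabilizer $H$ is a proper subgroup, and the restriction to $H$ of the inducing character has genuine $X$-dependence along a subgroup that acts on $V\otimes\widetilde{V}$ through a finite quotient (this is where smoothness, admissibility and $G$-finiteness of $\pi$ enter); via Frobenius reciprocity the associated graded piece of $\mathcal{B}(X)$ is then annihilated by an element of $S$ and hence vanishes over $R$ — precisely the mechanism already in place for Theorem \ref{intrat}. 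For the open orbit, $\Delta$ and the modulus character are trivial on $\Delta G$, the graded piece of $I(X,\omega)$ is an induction from $\Delta G$ to $G\times G$, and its contribution to $\mathcal{B}(X)$ is $\Hom_{\Delta G}\big((V\otimes\widetilde{V})|_{\Delta G},R\big)$, which by the hypothesis $\Hom_{G\times G}(V\otimes\widetilde{V},A)\cong A$ (and flatness of $R$ over $A$) is $\cong R$. Left-exactness of $\Hom(-,R)$ and d\'evissage along the filtration then give $\mathcal{B}(X)\cong R$, with the isomorphism realized by restriction to the open-orbit piece.

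Finally, to see that $Z(X,\cdot,\cdot)$ is a generator rather than merely an element of $\mathcal{B}(X)$, I would evaluate it on sections supported on the dense open orbit: there the twist is trivial and one obtains $Z(X,\varphi,f)=c\cdot\langle\varphi\rangle$, with $c$ a volume (a unit in $A$, since $A$ is a $\ZZ[1/p]$-algebra) and $\langle\cdot\rangle\colon V\otimes\widetilde{V}\to A$ the canonical pairing, which is surjective by hypothesis. Hence the image of $Z(X,\cdot,\cdot)$ under $\mathcal{B}(X)\isomto R$ is a unit, so $Z(X,\cdot,\cdot)$ generates $\mathcal{B}(X)$, and the functional equation follows as above. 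I expect the main obstacle to be the rank-one computation of $\mathcal{B}(X)$: over $A[X,X^{-1}]$ the non-open orbits genuinely contribute, and the whole point of inverting $S$ is that their contributions become torsion; verifying that each such contribution is killed by an explicit element of $S$ requires carefully tracking how the inducing characters restrict to the various orbit stabilizers, which is the technical heart shared with Theorem \ref{intrat}.
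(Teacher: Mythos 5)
Your overall strategy --- recast both sides as invariant bilinear forms, filter by $(G\times G)$-orbits on $P\backslash G^\Box$, kill the non-open orbits after inverting $S$, identify the open-orbit contribution with $\Hom_{\Delta G}(V\otimes\widetilde{V},-)$, and use the trace-map hypothesis to show $Z(X,\cdot,\cdot)$ takes a unit value --- is the same framework the paper uses (Propositions \ref{exone} and \ref{exgamma}, together with the orbit filtration already set up for Theorem \ref{theorat}). So the skeleton is right, and the identification of the open-orbit piece with a Hom-space over the diagonal via Frobenius reciprocity is exactly the paper's Lemma preceding the final computation. Where you diverge is in working over $R=S^{-1}A[X,X^{-1}]$ throughout and doing a full d\'evissage, whereas the paper only computes the $\Hom$-space of the bottom piece $I^{(0)}/T^{(0)}$ over $A[X,X^{-1}]$, proves the functional equation there, and then propagates it to all of $I$ by multiplying by a Laurent polynomial in $S$ that carries any element into $I^{(0)}+T$.

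That packaging difference is where the first real gap appears. The claim that $\Hom_{\Delta G}\bigl((V\otimes\widetilde{V})|_{\Delta G},R\bigr)\cong R$ ``by the hypothesis $\Hom_{G\times G}(V\otimes\widetilde{V},A)\cong A$ and flatness of $R$ over $A$'' is not valid: $\Hom$ commutes with flat base change only when the source is finitely presented, and $(V\otimes\widetilde{V})_{\Delta G}$ is not known to be finitely generated over $A$. Concretely, $\Hom_A(K,A)=0$ does not imply $\Hom_A(K,S^{-1}A[X,X^{-1}])=0$ for an arbitrary $A$-module $K$, so the complement of the trace line in $(V\otimes\widetilde{V})_{\Delta G}$ could in principle contribute. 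The paper avoids this by keeping the target $A[X,X^{-1}]$: the coefficient projections $\alpha_i\colon A[X,X^{-1}]\to A$ decompose any equivariant map $V\otimes\widetilde{V}\to A[X,X^{-1}]$ into components lying in $\Hom_{A[G]}(V\otimes\widetilde{V},A)\cong A$, and surjectivity of the trace map forces all but finitely many to vanish. To repair your argument you would either need to prove this finite-generation, or first multiply your two bilinear forms by an explicit element of $S$ so they land in $A[X,X^{-1}]$ before comparing them --- which is precisely the paper's $\widehat{Q}\widehat{R}$ maneuver in the proof of Proposition \ref{exgamma}.

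The second gap is the intertwining operator. ``One shows that $\Hom_{R[G^\Box]}(R\otimes I(X,\omega),R\otimes I(X^{-1},\omega^{-1}))$ is free of rank one; let $M_X$ be a generator'' asserts an unproved freeness statement and, more importantly, glosses over the existence of a nonzero operator with the required $S^{-1}A[X,X^{-1}]$-rationality. The paper constructs $M_X$ explicitly in Section \ref{sectionintertwining}: it filters the Jacquet module $r_{\overline{P}}^{G^\Box}i_P^{G^\Box}(\omega_X\circ\Delta)$ via the Bruhat cells $Pw\overline{P}$, multiplies by central elements $a_w-\theta_w(a_w)$ to project into the open-cell piece (Waldspurger's device, and the reason the operator lands in $S^{-1}A[X,X^{-1}]$ rather than merely in formal Laurent series), and then integrates over $\overline{N}$. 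Rank-oneness of the intertwining space is neither proved nor used in the paper; your functional-equation argument only needs one such operator, so you should construct it directly rather than appeal to an unproved generator.
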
\noindent To prove this result we first construct an intertwining operator $M(X)\colon I(X,\omega)\to S^{-1}A[X,X^{-1}]\otimes I(X^{-1},\omega^{-1})$. If we multiply $Z(X^{-1},\_,M_X(\_))$ and $Z(X,\_,\_)$ by certain Laurent polynomials in $S$ they both become elements of $$\operatorname{Hom}_{A[X,X^{-1}]}\left((V\otimes\widetilde{V})\otimes_{A[G\times G]}I(X,\omega)^{(0)},A[X,X^{-1}]\right),$$
where $I(X,\omega)^{(0)}$ is the bottom element of the filtration on $I(X,\omega)$. We then show that, under the technical hypothesis of the above theorem, this $\operatorname{Hom}$-space is free of rank one as an $A[X,X^{-1}]$-module. It is not hard to see that one can choose input data such that the doubling zeta integral becomes a constant from which the result follows. For inner forms we need additional assumptions on the representation, which can be removed for quasi-split groups by a result of Dat. Conjecturally however the results of Dat can be generalized to arbitrary inner forms which would allow us to remove these additional hypotheses. 

Let $W(k)$ be the Witt vectors of an algebraically closed field $k$ with characteristic $\ell\not=p$ and assume that $A$ is an $W(k)$-algebra. We also prove that the normalized intertwining operator $M^*(s)$ interpolates well in families which then allows us to define the normalized gamma factor $\gamma(X,\pi\times\omega,\psi)\in S^{-1}A[X,X^{-1}]$ associated to the pair $\pi\times\omega$ and an additive character $\psi\colon F\to W(k)^\times$.

We give a quick overview of the different sections in this paper. In Section \ref{sectionnotpre} we introduce some representation theoretic preliminaries that will be of use later. In the following section we describe the classical groups we will be working with. In Section \ref{sectiondoublingzeta} we introduce the doubling method and adapt it to our setting. In Section \ref{sectionrational} we prove the rationality of the doubling zeta integral. Following this in Section \ref{sectionintertwining} we will construct the intertwining operator which we then use to prove the functional equation in Section \ref{sectionfunctional}. Here we also give an example of a class of representations which are related to the local Langlands conjecture in families and satisfy Theorems \ref{intrat} and \ref{intfunc}. In the last section we show that the normalizing factor for the intertwining operator can be interpolated as well and lastly give a definition of the normalized gamma factor.

We hope we can extend the methods used in this article to show that the recent extension of the doubling method to pairs $G\times\GL_n$ by Cai, Friedberg, Ginzburg and Kaplan in \cite{cai2019doubling} interpolates well. For applications to local Langlands in families we would then hope to prove a converse theorem as in \cite{helm2018converse}. The main references for the doubling method we used are \cite{piatetski1987functions}, \cite{harris1996theta}, \cite{lapid2005local}, \cite{yamana2014functions} and \cite{kakuhama2019local}.

\subsection{Acknowledgements}
I am very grateful to my supervisor David Helm for suggesting this problem and invaluable guidance throughout. Moreover, I would like to thank Erez Lapid and Shunsuke Yamana for answering questions about the doubling method over the complex numbers. I would also like to thank Sascha Biberhofer, Robert Kurinczuk and Gil Moss for useful discussions. This work was supported by the Engineering and Physical Sciences Research Council [EP/L015234/1], The EPSRC Centre for Doctoral Training in Geometry and Number Theory (The London School of Geometry and Number Theory), University College London and King's College London.

\section{Notation and Preliminaries}\label{sectionnotpre} Let $p$ be a prime number and fix some finite extension $F$ of $\mathbb Q_p$. Moreover, $E$ will always be a finite extension of $F$, with ring of integers $\mathcal O_E$, uniformizer $\varpi_E$, and its residue field has cardinality $q_E$.
We denote by $D$ a central division algebra over $E$. For a free left $D$-module $W$ let $\operatorname{Nrd}_W$ (respectively $\operatorname{Trd}_W$) be the reduced norm (respectively the reduced trace) of the central simple $E$-algebra $\End_D(W)$.\\ 
We fix a commutative noetherian ring $A$, which is also a $\mathbb Z[1/p]$-algebra. Since we deal with normalized inductions we assume that $A$ contains a square root of $p$ and we fix a choice of such. For any $\mathfrak p\in\operatorname{Spec}(A)$ we set $\kappa(\mathfrak p)\coloneqq A_{\mathfrak p}/\mathfrak pA_{\mathfrak p}$. Let $S$ be the multiplicative subset of the ring of Laurent polynomials $A[X,X^{-1}]$ which consists of Laurent polynomials whose leading and trailing coefficients are units in $A$. Later we will also sometimes assume that $A$ is a $W(k)$-algebra where $W(k)$ is the ring of Witt vectors of some algebraically closed field of characteristic $\ell\not=p$.\\
Suppose that $G$ is a locally profinite group. Then a smooth representation of $G$ over $A$ is an $A[G]$-module where every element is stabilized by an open compact subgroup of $G$. We say an $A[G]$-module is $G$-finite if it is finitely generated as an $A[G]$-module. Moreover, an $A[G]$-module will be called admissible if for any open compact subgroup $K$ of $G$ the submodule of vectors which are fixed under the action of $K$ is finitely generated as an $A$-module. For a smooth representation $(\pi,V)$ we denote by $(\widetilde{\pi},\widetilde{V})$ the contragredient of $\pi$ which consists of the smooth vectors in $\operatorname{Hom}_A(V,A)$.\\
Let $H$ be a closed subgroup of $G$ and $(\sigma,W)$ a smooth $A[H]$-module. Then we write $\operatorname{Ind}_H^G(\sigma)$ (respectively $\operatorname{c-Ind}_H^G(\sigma)$) for the space of functions (functions whose support modulo $H$ is compact) $f\colon G\to W$ satisfying $f(hg)=\sigma(h)f(g)$ for all $h\in H,g\in G$ and which are moreover smooth with respect to the action of $G$ via right translation.\\
For any representation $(\sigma,W)$ of $H$ we define for $g\in G$ the representation $(g\cdot\sigma,gW)$ of $g\cdot H=gHg^{-1}$ on $W$ via $(g\cdot\sigma)(x)w=\sigma(g^{-1}xg)w$ for $x\in g\cdot H,w\in W$. 
For a subgroup $H$ of $G$ and a character $\theta\colon H\to A^\times$ we will denote by $V_{H,\theta}$ the quotient module $V/V(H,\theta)$ where $V(H,\theta)$ is the $A$-submodule generated by all elements $\sigma(h)v-\theta(h)v$ for $h\in H,v\in V$. Moreover, let $V^{H,\theta}$ be the $A$-submodule of $V$ consisting of elements $v$ such that $\sigma(h)v=\theta(h)v$ for all $h\in H$.\\
If we have a $\mathbb Z[1/p]$-algebra homomorphism $\alpha\colon A\to A'$ we will write $(\sigma\otimes_A A',W\otimes_A A')$ for the $A'[H]$-module $W\otimes_A A'$ where $H$ acts on the first factor. If it is clear from the context we often just write $\otimes$ instead of $\otimes_A$. Moreover, for any such algebra homomorphism $\alpha$ we will write $\hat{\alpha}$ for the canonical map from $A[[X]][X^{-1}]$ to $A'[[X]][X^{-1}]$ induced by $\alpha$.

Suppose now that $G$ are the $F$-points of a reductive algebraic group defined over $F$. Then Vigneras (\cite{vigneras1996representations}) constructs an $A$-valued left Haar measure on any group containing an open compact pro-$p$ subgroup. In particular we get an $A$-valued measure on any closed subgroup $H$ of $G$ which we denote by $\mu_H$. For any such $H$ of $G$ we will denote by $\delta_H$ its modulus character. One can show that $\delta_H$ has values in the units of $A$ (it is always a power of $p$) and is multiplicative. 

If $P=MN$ is a parabolic subgroup of $G$ and $(\sigma,W)$ a smooth representation of $M$ we will write $i_P^G$ for the normalized parabolic induction $\Ind_P^G(\delta_P^{1/2}\otimes_{\mathbb Z[1/\sqrt{p}]}\sigma)$. Moreover, for a  representation $(\pi,V)$ of $G$ we will denote the normalized Jacquet module  $(\delta_P^{-1/2}\otimes_{\mathbb Z[1/\sqrt{p}]}\pi,V_N)$ by $r_P^G(\pi)$.

\subsection{Hecke Algebras}
Let $H$ be any closed subgroup of $G$. Then the Hecke algebra $\mathcal H(H)$ is the $A$-module
$$\{f\colon H\to A\mid f\text{ is locally constant and has compact support}\},$$
where multiplication is given by the convolution product
$$(\alpha *\beta)(h)=\int_H\alpha(x)\beta(x^{-1}h)dx,$$
for $\alpha,\beta\in\mathcal H(H)$. \\
Recall that there is a standard correspondence between smooth left $A[G]$-modules and nondegenerate left $\mathcal H(G)$-modules. We can define a right $\mathcal H(G)$-module structure on a smooth left $A[G]$-module $V$ via 
$$vf=\int_Gf(g)g^{-1}vdg,$$
where $v\in V$ and $f\in\mathcal H(G)$. We then have the following results.
\begin{proposition}
Let $V$ be a smooth left $A[G]$-module, then
$$\mathcal H(G)\otimes_{\mathcal H(G)} V\cong V\otimes_{\mathcal H(G)}\mathcal H(G)\cong V.$$
\end{proposition}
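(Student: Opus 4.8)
The plan is to exploit the standard correspondence between smooth left $A[G]$-modules and nondegenerate left $\mathcal H(G)$-modules, together with the approximate-identity structure of $\mathcal H(G)$. Recall that $\mathcal H(G)$ does not have a unit element, but for every open compact subgroup $K$ of $G$ the normalized idempotent $e_K \coloneqq \mu_G(K)^{-1}\mathbbm 1_K$ (which makes sense since $A$ is a $\mathbb Z[1/p]$-algebra and $\mu_G(K)$ is a power of $p$, hence a unit in $A$) satisfies $e_K * e_K = e_K$, and these $e_K$ form an approximate identity as $K$ shrinks. The key point is that for a smooth left $A[G]$-module $V$, the right action $vf = \int_G f(g)g^{-1}v\,dg$ has the property that $v e_K$ is the projection of $v$ onto $V^K$ (averaging over $K$), so every $v \in V$ lies in $ve_K \cdot A[G] \subseteq v\mathcal H(G)$ for $K$ small enough; in particular $V\mathcal H(G) = V$ (nondegeneracy), and symmetrically $\mathcal H(G)V = V$ for the left action.

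First I would treat $\mathcal H(G)\otimes_{\mathcal H(G)}V \cong V$. Define a map $\Phi\colon \mathcal H(G)\otimes_{\mathcal H(G)}V \to V$ by $f\otimes v \mapsto f\cdot v$ (the left action of $\mathcal H(G)$ on $V$, i.e.\ $\int_G f(g)\,gv\,dg$ — here I am using that $V$ is also a nondegenerate left $\mathcal H(G)$-module in the usual way); this is well defined and $\mathcal H(G)$-linear. Surjectivity is exactly the nondegeneracy $\mathcal H(G)V=V$ just noted. For injectivity, given $\sum_i f_i\otimes v_i$ in the kernel, choose $K$ small enough that all $v_i \in V^K$, so $v_i = e_K v_i$ and hence $f_i\otimes v_i = f_i\otimes e_K v_i = f_i e_K \otimes v_i$ (moving $e_K$ across the tensor over $\mathcal H(G)$); then $\sum_i f_i e_K \otimes v_i$ maps to $\sum_i (f_i e_K) v_i = \sum_i f_i v_i = 0$, and writing $v_i = e_K v_i$ again together with the relation in the tensor product lets one collapse the sum to $\bigl(\sum_i f_i e_K\bigr)\otimes \bigl(\text{suitable element}\bigr)$; more cleanly, one checks that $f\otimes v = (f*e_K)\otimes v$ and that the map $v \mapsto e_K \otimes v$ is an inverse on $V^K$, then passes to the colimit over $K$. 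The symmetric statement $V\otimes_{\mathcal H(G)}\mathcal H(G)\cong V$ is handled identically using the right action and the idempotents $e_K$ on the right.

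The main obstacle — really the only subtlety — is bookkeeping with the non-unital algebra $\mathcal H(G)$: one must consistently use the approximate identity $\{e_K\}$ in place of a unit, and verify that tensoring over $\mathcal H(G)$ interacts correctly with it, i.e.\ that $f * e_K \otimes v = f \otimes e_K v$ and that for $v\in V^K$ one has $e_K v = v$. Once the identity $f\otimes v = f*e_K \otimes v = f \otimes e_K v$ is established for all $K$ fixing $v$, both the well-definedness and the two-sided inverse of $\Phi$ drop out, and the colimit presentation $V = \varinjlim_K V^K$ packages everything. No deep input is needed beyond Vigneras's construction of the $A$-valued Haar measure (so that the $e_K$ exist as idempotents over $A$), which is already available in the excerpt.
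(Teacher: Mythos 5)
The paper states this proposition without proof (it is the standard nondegeneracy statement for smooth modules over the Hecke algebra), so there is no argument of the paper's to compare against; your approximate-identity approach via the idempotents $e_K$ is exactly the standard one. However, your injectivity step as written would fail. Starting from $\sum_i f_i\otimes v_i$ in the kernel, you absorb $e_K$ from the $v_i$ side, obtaining $\sum_i f_i * e_K\otimes v_i$, and then assert that the sum "collapses" to $\bigl(\sum_i f_i * e_K\bigr)\otimes(\text{suitable element})$. This is not a valid manipulation in a tensor product unless all the $v_i$ coincide; having a common idempotent on the left-hand factors does not let you merge distinct elementary tensors. The same issue infects your fallback: to verify that $v\mapsto e_K\otimes v$ inverts $\Phi$ on a general tensor $f\otimes v$ you need $e_K\otimes fv=f\otimes v$, which requires $e_K*f=f$, a condition on $f$, not on $v$.

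The fix is to impose smallness of $K$ relative to the $f_i$ rather than the $v_i$: since each $f_i$ is locally constant with compact support, there is an open compact pro-$p$ subgroup $K$ with $e_K*f_i=f_i$ for all (finitely many) $i$, and $\mu_G(K)$ is invertible because $A$ is a $\mathbb Z[1/p]$-algebra. Then
$$\sum_i f_i\otimes v_i=\sum_i (e_K*f_i)\otimes v_i=e_K\otimes\sum_i f_i\cdot v_i=e_K\otimes 0=0,$$
where the middle equality is the defining relation of $\otimes_{\mathcal H(G)}$. This is the collapse you were after, and it closes the argument; the statement $V\otimes_{\mathcal H(G)}\mathcal H(G)\cong V$ follows symmetrically by choosing $K$ with $f_i*e_K=f_i$. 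Everything else in your proposal (nondegeneracy giving surjectivity, existence of the idempotents over $A$) is correct.
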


If $H$ is a closed subgroup of $G$ the Hecke algebra $\mathcal H(H)$ is not necessarily a subalgebra of $\mathcal H(G)$. However, one can define a right $\mathcal H(H)$-module structure on $\mathcal H(G)$ via
$$(\alpha\cdot\eta)(g)=\int_H\alpha(gh^{-1})\eta(h)dh,$$
where $\alpha\in\mathcal H(G)$ and $\eta\in\mathcal H(H)$. With this action $\mathcal H(G)$ becomes a $(\mathcal H(G),\mathcal H(H))$-bimodule.\\ 
\begin{proposition}
Let $V$ (respectively $V'$) be a smooth left $A[G]$-module (respectively $A[H]$-module). We have
$$V\otimes_{\mathcal H(G)}(\mathcal H(G)\otimes_{\mathcal H(H)}V')\cong (V\otimes_{\mathcal H(G)}\mathcal H(G))\otimes_{\mathcal H(H)}V'\cong V\otimes_{\mathcal H(H)}V'.$$
\end{proposition}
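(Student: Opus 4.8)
The plan is to deduce both isomorphisms from the two propositions already recorded, together with the purely formal associativity of tensor products over a (possibly non-unital) ring; the only step with genuine content will be checking that one canonical map is equivariant for a Hecke-algebra action arising from a restricted module structure.

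For the first isomorphism I would invoke the general associativity statement: for rings $R$ and $S$, a right $R$-module $M$, an $(R,S)$-bimodule $N$ and a left $S$-module $P$, there is a natural isomorphism $(M\otimes_R N)\otimes_S P\cong M\otimes_R(N\otimes_S P)$, given on simple tensors by $(m\otimes n)\otimes p\mapsto m\otimes(n\otimes p)$. Taking $R=\mathcal H(G)$, $S=\mathcal H(H)$, $M=V$, $N=\mathcal H(G)$ with the $(\mathcal H(G),\mathcal H(H))$-bimodule structure described above, and $P=V'$, gives exactly the first claimed isomorphism. The only thing to check here is that the module structures implicit on each tensor factor in the statement are the ones just listed — that $\mathcal H(G)\otimes_{\mathcal H(H)}V'$ is viewed as a left $\mathcal H(G)$-module via the left regular action on $\mathcal H(G)$, and that $V\otimes_{\mathcal H(G)}\mathcal H(G)$ carries the right $\mathcal H(H)$-action from the bimodule — which is immediate from the definitions.

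For the second isomorphism I would first promote the isomorphism $\mu\colon V\otimes_{\mathcal H(G)}\mathcal H(G)\isomto V$, $v\otimes\alpha\mapsto v\alpha$, supplied by the first proposition, to an isomorphism of right $\mathcal H(H)$-modules, where the source carries the right $\mathcal H(H)$-action coming from the bimodule structure on $\mathcal H(G)$ and the target is $V$ equipped with the right $\mathcal H(H)$-action obtained by restricting $V$ along $H\hookrightarrow G$. Concretely this is the identity $v\cdot(\alpha\cdot\eta)=(v\cdot\alpha)\cdot\eta$ for $v\in V$, $\alpha\in\mathcal H(G)$, $\eta\in\mathcal H(H)$; writing out the three integral formulas involved ($v\cdot\alpha=\int_G\alpha(g)g^{-1}v\,dg$, the bimodule action $(\alpha\cdot\eta)(g)=\int_H\alpha(gh^{-1})\eta(h)\,dh$, and the restricted action $w\cdot\eta=\int_H\eta(h)h^{-1}w\,dh$) the identity follows from the substitution $g\mapsto gh$ in the Haar integral over $G$ (using that $G$ is unimodular, so that right translations preserve the measure) together with Fubini. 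Once $\mu$ is known to be $\mathcal H(H)$-linear, applying $-\otimes_{\mathcal H(H)}V'$ produces the isomorphism $(V\otimes_{\mathcal H(G)}\mathcal H(G))\otimes_{\mathcal H(H)}V'\isomto V\otimes_{\mathcal H(H)}V'$, and composing with the first isomorphism finishes the proof.

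The argument is largely bookkeeping; the only point carrying any content is the $\mathcal H(H)$-equivariance of $\mu$, i.e. the change-of-variables computation above, so the main thing to be careful about is keeping track of exactly which left/right module structure is being used on each tensor factor.
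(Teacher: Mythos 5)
Your proof is correct; the paper states this proposition without giving a proof, so there is nothing to compare against. Both ingredients you isolate are the right ones: the first isomorphism really is nothing but the three-factor associativity of tensor products over (possibly non-unital) rings, valid verbatim here since $\mathcal H(G)$ carries the $(\mathcal H(G),\mathcal H(H))$-bimodule structure the paper sets up, and the second isomorphism is obtained by base-changing the map $V\otimes_{\mathcal H(G)}\mathcal H(G)\to V$, $v\otimes\alpha\mapsto v\alpha$, along $-\otimes_{\mathcal H(H)}V'$, for which one only has to check that this map intertwines the two right $\mathcal H(H)$-actions. Your identity $v\cdot(\alpha\cdot\eta)=(v\cdot\alpha)\cdot\eta$ is precisely that check, and the verification by Fubini plus the substitution $g\mapsto gh$ is right; the appeal to unimodularity of $G$ (so that the left Haar measure is right-invariant and the substitution is measure-preserving) is genuinely needed and is available in the paper's setting, where $G$ is (the $F$-points of) a reductive group. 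One could phrase the endgame slightly differently — observe that the target of the isomorphism $V\otimes_{\mathcal H(G)}\mathcal H(G)\cong V$ is $V$ with its right $\mathcal H(G)$-action restricted along the inclusion $H\hookrightarrow G$, and then note that the bimodule action $\alpha\cdot\eta$ is by construction the restriction of the convolution $\alpha*\tilde\eta$ for the extension-by-zero $\tilde\eta$ of $\eta$ to $G$ — but this is the same computation in disguise. Nothing is missing.
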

For a smooth left $A[H]$-module $(V,\pi)$ we can form the tensor product $\mathcal H(G)\otimes_{\mathcal H(H)}V$ and let
$$\Phi\colon\mathcal H(G)\otimes_{\mathcal H(H)}V\to\operatorname{c-Ind}_H^G(V\otimes\delta_H)$$ 
be the map defined by
$$\Phi(f\otimes v)(g)=\int_Hf(g^{-1}h^{-1})\pi(h^{-1})vdh.$$
We have the following result.
\begin{proposition}\label{comphecke} Suppose there exists a neighbourhood basis $\{K_i\}_{i\in I}$ of open compact subgroups of the identity in $G$ such that for all $K_i$, the volume $\mu_H(gK_ig^{-1}\cap H)$ is invertible in $A$ for all $g\in G$. Then the above defined map $\Phi$ is an isomorphism of $\mathcal H(G)$-modules.
\end{proposition}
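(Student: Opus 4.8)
The plan is to build an explicit two-sided inverse to $\Phi$ and check it is a morphism of $\mathcal{H}(G)$-modules. First I would unwind what the hypothesis buys us: if $K$ is an open compact subgroup of $G$, then $\mu_H(gKg^{-1}\cap H)$ is a unit in $A$ for every $g$, so for any $v$ in $V$ fixed by a small enough $K$ and any $g \in G$, averaging the characteristic function $\frac{1}{\mu_H(gKg^{-1}\cap H)}\mathbbm{1}_{gKg^{-1}\cap H}$ against $\pi$ produces a well-defined idempotent-like operator on $V$, and more importantly this lets us invert the integration in the definition of $\Phi$. The target space $\operatorname{c-Ind}_H^G(V\otimes\delta_H)$ consists of $W$-valued functions on $G$ with compact support mod $H$, smooth for right translation; given such an $f$, I want to write down an element $\sum f_i \otimes v_i$ of $\mathcal{H}(G)\otimes_{\mathcal{H}(H)} V$ mapping to it. Since $f$ is smooth and compactly supported mod $H$, it is supported on finitely many cosets $H g_j K$ and is right-$K$-invariant for some open compact $K$; on each such piece $f$ is determined by the single vector $f(g_j) \in W$, which is fixed by $g_j^{-1} K g_j \cap H$ up to the $\delta_H$-twist. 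The inverse should send $f$ to $\sum_j \left(\frac{1}{\mu_H(g_j^{-1}Kg_j\cap H)}\,\mathbbm{1}_{g_j K}\right)\otimes f(g_j)$, suitably normalized so that the $\delta_H$ factors and the Haar measure normalizations cancel when one applies $\Phi$.

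The key steps, in order, are: (1) fix notation for the Haar measures $\mu_G$, $\mu_H$ and verify the compatibility $\int_G \phi(g)\,dg = \int_{H\backslash G}\int_H \phi(hg)\,dh\,dg$ with the normalization forced by the $\delta_H$ twist in $\operatorname{c-Ind}$; (2) check $\Phi$ is well-defined, i.e.\ that $\Phi(f\cdot\eta \otimes v) = \Phi(f \otimes \pi(\eta)v)$ for $\eta \in \mathcal{H}(H)$ — this is a Fubini computation using the definition of the $\mathcal{H}(H)$-action on $\mathcal{H}(G)$ — and that the image genuinely lands in $\operatorname{c-Ind}_H^G(V\otimes\delta_H)$ (the left $H$-equivariance with the $\delta_H$-twist comes from the left-invariance of $\mu_H$ against the change of variable $h \mapsto h_0 h$, which introduces exactly one factor of $\delta_H(h_0)$); (3) construct the candidate inverse $\Psi$ as above, using the hypothesis to make sense of the normalizing units $\mu_H(gKg^{-1}\cap H)^{-1}$; (4) show $\Psi$ is well-defined independent of the choices of $K$ and of coset representatives $g_j$ — refining $K$ or changing representatives only multiplies and divides by compatible volume units, using that volumes are multiplicative in the index; (5) compute $\Phi\circ\Psi = \operatorname{id}$ and $\Psi\circ\Phi = \operatorname{id}$ directly on the appropriate generating sets ($\mathbbm{1}_{gK}\otimes v$ on one side, functions supported on a single coset $HgK$ on the other); (6) check $\mathcal{H}(G)$-linearity of $\Phi$, which is immediate since both the $\mathcal{H}(G)$-module structures are via left translation / convolution on the left factor and $\Phi$ respects this by another change of variables.

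The main obstacle I expect is step (4) together with the bookkeeping in step (5): making the inverse genuinely canonical requires that the normalization $\mu_H(gKg^{-1}\cap H)^{-1}$ interacts correctly both with shrinking $K$ and with the $\delta_H$-twist built into the target, and one has to be careful that this works over a general noetherian $A$ rather than over $\mathbb{C}$ — in particular one cannot argue by a density or continuity argument, so every identity must be an honest identity of finite sums, and one genuinely uses that the relevant volumes are \emph{units} (not just nonzero) precisely to divide by them. The rest is standard (this is the noetherian-coefficient analogue of the classical statement that $\operatorname{c-Ind}$ is computed by $\mathcal{H}(G)\otimes_{\mathcal{H}(H)}(-)$ when the relevant pro-orders are invertible), but because $A$ is not a field one must resist the temptation to pass to irreducible subquotients or to invoke any result whose proof secretly uses semisimplicity.
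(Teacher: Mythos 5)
The paper never actually proves this proposition; it is quoted as a standard fact and the text only goes on to verify the hypothesis for parabolic subgroups, so there is no argument of the author's to measure yours against. On its own merits your plan is the standard one and it does work: the hypothesis on the $\mu_H(gK_ig^{-1}\cap H)$ is used exactly where you say, to make the normalizing volumes honest units of $A$ so that the inverse is defined and independent of the choice of $K_i$ and of coset representatives, with no recourse to density or semisimplicity. Two bookkeeping points to watch when you write it out. First, with the paper's convention $\Phi(f\otimes v)(g)=\int_H f(g^{-1}h^{-1})\pi(h^{-1})v\,dh$, the elementary tensor whose image is right $K$-invariant and supported on the single double coset $Hg_0K$ is $\mathbbm{1}_{Kg_0^{-1}}\otimes v$ rather than $\mathbbm{1}_{g_0K}\otimes v$: one computes $\Phi(\mathbbm{1}_{Kg_0^{-1}}\otimes v)(g_0)=\int_{g_0Kg_0^{-1}\cap H}\pi(h^{-1})v\,dh=\mu_H(g_0Kg_0^{-1}\cap H)\,v$ once $K$ is small enough that $v$ is fixed by $g_0Kg_0^{-1}\cap H$, so the inverse should be $f\mapsto\sum_j\mu_H(g_jKg_j^{-1}\cap H)^{-1}\,\mathbbm{1}_{Kg_j^{-1}}\otimes f(g_j)$ up to the $\delta_H$-twist (the displaced version $\mathbbm{1}_{g_0K}\otimes v$ has image spread over several double cosets, which would make step (5) needlessly painful). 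Second, $\Psi\circ\Phi=\mathrm{id}$ is the injectivity direction and is the only place where the relations $f\cdot\eta\otimes v=f\otimes\eta v$ of the tensor product over $\mathcal H(H)$ must be confronted: you need $\Psi$ well defined on $\cInd_H^G(V\otimes\delta_H)$ first (your step (4)), and then the identity checked on the tensors $\mathbbm{1}_{Kg^{-1}}\otimes v$, which span the source because every $\alpha\in\mathcal H(G)$ is a finite $A$-combination of such functions after shrinking $K$. Neither point is an obstruction, only the care you already anticipated.
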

The above conditions are satisfied for a parabolic subgroup $P$ of $G$. Namely, let $K'$ be an open compact subgroup of $G$ such that $\mu_P(P\cap K')$ is nonzero and invertible (for example $P\cap K'$ is a pro $p$-subgroup of $P$). We have that $\mu_P(p\gamma K\gamma^{-1}p^{-1}\cap P)=\mu_P(p(\gamma K\gamma^{-1}\cap P)p^{-1})=\delta_P(p)\mu_P(\gamma K\gamma^{-1}\cap P)$ for any compact open subgroup $K$ of $G$ and $\gamma\in G$. There is a compact subset $C$ such that $PC=G$, so by the above considerations it is enough to show there is a small enough $K$ such that for all $c\in C$ we have that $cKc^{-1}\subseteq K'$. But now there are finitely many $c_1,\dotsc,c_l$ such that $C\subseteq\bigcup_{i=1}^lK'c_i$ and hence if we choose $K$ to be a compact open subgroup of the open subgroup $\bigcap_{i=1}^lc_i^{-1}K'c_i$ we obtain what we want.\\
Suppose we have two smooth (left) $A[H]$-modules $U,V$. Then we have a diagonal action of $H$ on $U\otimes_AV$. The coinvariants under this action
$$U\otimes_A V/\langle u\otimes v-hu\otimes hv\mid u\in U,v\in V,h\in H\rangle$$
are isomorphic to the tensor product $U\otimes_{A[H]}V$, where $U$ becomes a right $A[H]$-module via the action $u\cdot h=h^{-1}u$ for $h\in H,u\in U$. We will need the following result.
\begin{lemma}\label{tensprod} The equation
	$$\langle h^{-1}u\otimes v-u\otimes hv\mid u\in U,v\in V,h\in H\rangle=\langle uf\otimes v-u\otimes fv\mid u\in U,v\in V,f\in\mathcal H(H)\rangle$$
	holds. In particular we can identify $U\otimes_{A[H]}V$ with $U\otimes_{\mathcal H(H)}V$.
\end{lemma}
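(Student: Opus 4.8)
The plan is to show the two submodules of $U \otimes_A V$ coincide by a double inclusion, exploiting the nondegeneracy of $U$ and $V$ as $\mathcal H(H)$-modules together with the fact that both are smooth $A[H]$-modules. For the inclusion of the first submodule into the second, I would fix $u \in U$, $v \in V$, $h \in H$ and try to realize the generator $h^{-1}u \otimes v - u \otimes hv$ as a combination of generators of the form $uf \otimes v - u \otimes fv$. Since $V$ is smooth, $v$ is fixed by some open compact subgroup $K$ of $H$; let $f = \mathbbm{1}_{h^{-1}K}/\mu_H(K) \in \mathcal H(H)$ be the (normalized) characteristic function of the coset $h^{-1}K$. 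Then a direct computation with the right action $uf = \int_H f(x) x^{-1}u\, dx$ gives $uf = h^{-1}u$ (the average of $x^{-1}u$ over $x \in h^{-1}K$, using that $u$ need not be $K$-fixed — so one may have to first shrink $K$ so that it fixes $u$ as well, which is harmless), and similarly $fv = \int_H f(x) x v\, dx = hv$ since $v$ is $K$-fixed. Hence $h^{-1}u \otimes v - u \otimes hv = uf \otimes v - u \otimes fv$ lies in the right-hand submodule.

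For the reverse inclusion, I would start from a generator $uf \otimes v - u \otimes fv$ with $f \in \mathcal H(H)$ arbitrary. Write $uf = \int_H f(x) x^{-1} u \, dx$; since $f$ is locally constant with compact support and $u$ is smooth, this integral is really a finite $A$-linear combination $uf = \sum_i a_i\, h_i^{-1} u$ with $a_i \in A$, $h_i \in H$ (decompose the support of $f$ into cosets of a small enough open compact subgroup fixing $u$, and $\sum_i a_i$ corresponds to $f$ applied appropriately). By the same token $fv = \sum_i a_i\, h_i v$ with the \emph{same} coefficients and group elements, because the decomposition of $f$ is the same. Therefore
\[
uf \otimes v - u \otimes fv = \sum_i a_i\bigl(h_i^{-1} u \otimes v - u \otimes h_i v\bigr),
\]
which visibly lies in the left-hand submodule. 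This requires a little care: one should choose a single open compact subgroup $K$ small enough to fix both $u$ and $v$ and fine enough that $f$ is constant on $K$-cosets, so that both expansions use the same partition; the smoothness of $U$ and $V$ makes this possible.

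I do not expect a serious obstacle here; the only delicate point is bookkeeping — making sure the same finite decomposition of $f$ simultaneously computes $uf$ and $fv$, which is exactly where smoothness of both modules is used, and making sure the normalization of Haar measure and the convention $u \cdot h = h^{-1} u$ are tracked consistently. Once the equality of submodules is established, the final sentence is immediate: $U \otimes_{A[H]} V$ is by definition $U \otimes_A V$ modulo the first submodule, and $U \otimes_{\mathcal H(H)} V$ is $U \otimes_A V$ modulo the second (using that $\mathcal H(H)$ acts nondegenerately, so the relative tensor product over $\mathcal H(H)$ is the quotient by the span of $uf \otimes v - u \otimes fv$), so the two coincide.
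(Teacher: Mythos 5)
The paper states Lemma \ref{tensprod} without proof, so there is nothing to compare against; your double-inclusion strategy is the natural one and the overall structure is sound. However, the explicit identities you assert in the forward direction are wrong as written. With $f=\mathbbm{1}_{h^{-1}K}/\mu_H(K)$ and the conventions of the paper ($uf=\int_H f(x)x^{-1}u\,dx$, $fv=\int_H f(x)xv\,dx$, left Haar measure), parametrizing $x=h^{-1}k$ gives $x^{-1}u=k^{-1}hu$ and $xv=h^{-1}kv$, so in fact $uf=hu$ (provided $K$ fixes $hu$) and $fv=h^{-1}v$ (provided $K$ fixes $v$) --- not $h^{-1}u$ and $hv$ as you claim. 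The element you actually produce is $hu\otimes v-u\otimes h^{-1}v$, i.e.\ the generator of the left-hand submodule attached to $h^{-1}$ rather than to $h$; since $h$ ranges over all of $H$ this still yields the desired inclusion, but the computation must be corrected (or, more cleanly, take $f=\mathbbm{1}_{hK}/\mu_H(K)$, which does give $uf=h^{-1}u$ and $fv=hv$).

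Two further points of precision. First, the condition you need on $K$ is not merely that it fix $u$: for $uf=h^{-1}u$ with $f=\mathbbm{1}_{hK}/\mu_H(K)$ you need $K$ to fix $h^{-1}u$, equivalently $K\subseteq h^{-1}\,\mathrm{Stab}(u)\,h$; the same conjugation issue appears for each coset representative $h_i$ in the reverse inclusion, where $x\mapsto x^{-1}u$ is constant on cosets $Kh_i$ while $x\mapsto xv$ is constant on cosets $h_iK$, so the two expansions do not automatically use the same partition. This is repairable: after a first decomposition of $\mathrm{supp}(f)$ one shrinks $K$ to lie in the finitely many conjugates $h_i^{-1}\,\mathrm{Stab}(u)\,h_i$ as well as in $\mathrm{Stab}(v)$ and in a group of bi-invariance of $f$; the condition $K\subseteq (h_ik)^{-1}\,\mathrm{Stab}(u)\,(h_ik)$ is then stable under refining within each coset $h_iK$, so the process terminates. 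Second, you should take $K$ pro-$p$ so that $\mu_H(K)$ is invertible in the $\mathbb Z[1/p]$-algebra $A$; for a general open compact subgroup the volume need not be a unit. With these corrections your argument is complete.
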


\section{Classical Groups}
In this section we describe the classical groups that will be considered in the following sections. Let $W$ be a free left $D$-module of rank $n$. Suppose we have an $F$-bilinear map $h\colon W\times W\to D$. We will distinguish between the nondegenerate and the linear case.

\subsubsection{Nondegenerate Case}
Suppose $D$ has center $E$ (which is a finite extension of $F$) and comes equipped with an involution $\rho$ such that $F$ is the fixed field when $\rho$ is restricted to $E$. Moreover, let $\epsilon\in E$ be either $1$ or $-1$. We assume that $h$ satisfies
$$\rho(h(w,v))=\epsilon h(v,w)$$
and $$h(av,bw)=ah(v,w)\rho(b)$$
for all $v,w\in W$ and $a,b\in D$.
We call $h$ hermitian if $\epsilon=1$ and skew-hermitian if $\epsilon=-1$.
We will consider the following cases for $D,E$ and $\rho$:
\begin{enumerate}
	\item[(I1)] $D=E=F$ and $\rho$ is the identity,
	\item[(I2)] $D$ is a nonsplit quaternion algebra over $E=F$ and $\rho$ is the canonical involution of $D$,
	\item[(I3)] $D=E$ is a quadratic extension of $F$ and $\rho$ is the nontrivial element of $\Gal(E/F)$.
\end{enumerate}
Here we also assume that $h$ is nondegenerate, i.e. that $W^\perp=\{w \in V\mid h(w,w')=0\text{ for all }w'\in W\}$ is trivial. We will refer to this as case (I).
\subsubsection{Linear Case}
Here $D$ is an arbitrary division algebra with center $E=F$ and $h=0$. We will refer to this setting as the linear case or case (II).

Whenever we will speak about a $D$-module $W$ equipped with a bilinear form $h$ we refer to one of the above cases. Let $\End(W,D)$ be the $D$-linear endomorphisms of $W$ (which act on $W$ on the right). The groups we will work with are the isometry groups of a pair $(W,h)$, i.e. 
$$G\coloneqq\operatorname{Isom}(W,h)=\{g\in\End(W,D)^\times\mid h(vg,wg)=h(v,w)\text{ for all }v,w\in W\}.$$
These are the $F$-points of a (possibly disconnected) reductive algebraic group.

\section{The Doubling Method}\label{sectiondoublingzeta}
We will now give a quick introduction to the setup of the doubling method. Assume that $(W,h)$ is of case (I) or (II) as in the above section. Consider the free left $D$-module $W^\Box=W\times W$ together with the $F$-bilinear form
$$h^\Box((v_1,v_2),(w_1,w_2))= h(v_1,w_1)-h(v_2,w_2).$$
Moreover, let $G^\Box$ be the isometry group of the doubled space $(W^\Box,h^\Box)$.
We have a closed topological embedding $$G\times G\hookrightarrow G^\Box$$ where $(w_1,w_2)(g_1,g_2)=(w_1g_1,w_2g_2)$. Let $$W^\bigtriangleup=\{(w,w)\in W^\Box\mid w\in W\}$$
and $$W^\bigtriangledown=\{(w,-w)\in W^\Box\mid w\in W\}.$$
If $h$ is nondegenerate both of the above spaces are maximal totally isotropic subspaces of $(W^\Box,h^\Box)$. Let $P=P_{W^\bigtriangleup}$ be the parabolic subgroup of $G^\Box$ which stabilizes $W^\bigtriangleup$. We set $\overline{P}$ to be the stabilizer of $W^\bigtriangledown$ which is the parabolic opposite to $P$. We have Levi decompositions $P=MN$ and $\overline{P}=M\overline{N}$, where $M$ consists of the elements of $G^\Box$ that stabilize $W^\bigtriangleup$ and $W^\bigtriangledown$. Note that
$$(G\times G)\cap P=\{(g,g)\mid g\in G\}.$$ Let $\Delta\colon P\to E^\times$ be the character defined via
\begin{equation*}
\Delta(x)=\begin{cases}
\operatorname{Nrd}_{W^\bigtriangleup}(x)&\text{in the nondegenerate case,}\\
\operatorname{Nrd}_{W^\bigtriangleup}(x)\operatorname{Nrd}_{W^\Box/W^\bigtriangleup}(x)^{-1}&\text{in the linear case.}
\end{cases}
\end{equation*}

We fix a smooth character $\omega\colon E^\times\to A^\times$ and define a character $\omega_X\colon E^\times\to A[X,X^{-1}]^\times$ via $$\omega_X(e)=\omega(e)X^{\operatorname{val}_E(e)},$$
which defines a smooth representation $\omega_X\circ\Delta$ of $P$ on $A[X,X^{-1}]$.
The induced $G^\Box$-representation
$$I(X,\omega)\coloneqq i_{P}^{G^\Box}(\omega_X\circ\Delta)$$
will be of importance later on. We use normalized induction here, i.e. $I(X,\omega)$ is the set of smooth vectors in 
$$\{f\colon G^\Box\to A[X,X^{-1}]\mid f(sx)=\delta_P(s)^{1/2}\omega_X(\Delta(s))f(x)\text{ for all }s\in P,x\in G^\Box\}.$$
We choose a maximal compact subgroup $K\subseteq G^\Box$ in good position to $P$, i.e. $PK=G^\Box$. This allows us to extend $\operatorname{val}_E(\Delta(\_))$ to a right $K$-invariant function on $G^\Box$ for which we have the following result.
\begin{proposition}\label{delprop}
	The map $g\mapsto\operatorname{val}_E(\Delta(g,1))$ from $G$ to $\mathbb Z$ is locally constant, proper and satisfies $$\operatorname{val}_E(\Delta(g,1))\geq 0$$ for all $g\in G$.
\end{proposition}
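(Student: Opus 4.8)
The plan is to dispose of local constancy at once, and then to identify $\operatorname{val}_E(\Delta(g,1))$ precisely enough — as an explicit nonnegative function of the Cartan class of $g$ — to read off non‑negativity and properness. Note first that this is a statement about the $p$‑adic group $G^\Box$ together with $P$, $\Delta$ and $K$ only; the coefficient ring is irrelevant, so this is the exact analogue of the corresponding fact in the complex theory (\cite{piatetski1987functions},\cite{lapid2005local},\cite{yamana2014functions}) and the argument below adapts the classical one. For local constancy: by construction the extension of $\operatorname{val}_E(\Delta(\cdot))$ to $G^\Box$ is right $K$‑invariant, and $K$ is open in $G^\Box$, so this extension is locally constant on $G^\Box$; since $g\mapsto(g,1)$ is a continuous (indeed closed) embedding $G\hookrightarrow G^\Box$, the composite $g\mapsto\operatorname{val}_E(\Delta(g,1))$ is a continuous, hence locally constant, map $G\to\mathbb Z$.

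For the other two assertions I would first reduce to a single Cartan double coset. Fix $K=\operatorname{Stab}_{G^\Box}(L\oplus L)$ for a self‑dual $\mathcal O_D$‑lattice $L\subseteq W$; then $K$ is in good position to $P$ and $K_G:=G\cap K=\operatorname{Stab}_G(L)$ is a special maximal compact of $G$, so $G=\bigcup_a K_G\,a\,K_G$ over dominant cocharacters $a$. The function $g\mapsto\operatorname{val}_E(\Delta(g,1))$ is right $K_G$‑invariant because $(gk,1)=(g,1)(k,1)$ with $(k,1)\in K$. It is also invariant under $g\mapsto g^{-1}$: from the factorisation $(g,1)=(g,g)\cdot(1,g^{-1})$ with $(g,g)\in M$, and the fact that $\operatorname{Nrd}_{W^\bigtriangleup}$ restricted to the diagonal $G\subseteq M$ is the reduced norm of an isometry — a unit in cases (I), while in the linear case the balanced $\Delta$ is identically $1$ there — one gets $\operatorname{val}_E(\Delta(g,g))=0$ and hence $\operatorname{val}_E(\Delta(g,1))=\operatorname{val}_E(\Delta(1,g^{-1}))$; and the involution $\tau\colon(w_1,w_2)\mapsto(w_2,w_1)$ of $G^\Box$ preserves $W^\bigtriangleup$, $P$, $\Delta$ and $K$ and carries $(1,g^{-1})$ to $(g^{-1},1)$, so $\operatorname{val}_E(\Delta(1,g^{-1}))=\operatorname{val}_E(\Delta(g^{-1},1))$. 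Together with right $K_G$‑invariance this yields left $K_G$‑invariance, so $\operatorname{val}_E(\Delta(g,1))$ depends only on the dominant cocharacter attached to $g$. A further reduction: a dominant cocharacter $a$ is supported on the rank‑one summands of $W$ (hyperbolic planes in the nondegenerate case, coordinate lines in the linear case), and $(a,1)$ is block‑diagonal for the corresponding decomposition of $W^\Box$, which is compatible with the embeddings of the rank‑one doubled groups and with $P$, $K$ and $\Delta$; hence $\operatorname{val}_E(\Delta(a,1))=\sum_i\operatorname{val}_E(\Delta^{(i)}(a_i,1))$, one summand per rank‑one factor.

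It then remains to treat the rank‑one case, i.e.\ to carry out one explicit Iwasawa decomposition (for instance $\operatorname{SL}_2\hookrightarrow\operatorname{Sp}_4$, and its analogues for $D\neq E$): one computes that $\operatorname{val}_E(\Delta(a,1))$ is a nonnegative integer depending linearly on the cocharacter parameter. Equivalently — and this is the clean way to phrase it — for the good lattice $L$ defining $K$ the quantity $\operatorname{val}_E(\Delta(g,1))$ equals a nonnegative lattice index measuring the relative position of $L$ and $Lg$ (for $D=E$ simply $\operatorname{length}_{\mathcal O_E}\bigl((L+Lg)/L\bigr)$, and the reduced‑norm‑normalised version in general), which is manifestly $\geq 0$. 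Properness follows: for each $n$ there are only finitely many dominant cocharacters $a$ with $\operatorname{val}_E(\Delta(a,1))=n$, and each $K_G\,a\,K_G$ is compact, so the fibre $\{g:\operatorname{val}_E(\Delta(g,1))=n\}$ is a finite union of compacta, hence compact; since the compact subsets of $\mathbb Z$ are exactly the finite ones, the map is proper.

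The main obstacle I anticipate is the rank‑one Iwasawa computation together with the bookkeeping of conventions — the normalisation of $\operatorname{Nrd}$, the choice of dominant chamber, and the lattice $L$ — needed to make the sign come out as $\geq 0$ (rather than $\leq 0$); and, in the inner‑form cases where $D$ is a division algebra, making the Cartan decomposition and the rank‑one reduction precise over $\mathcal O_D$. Everything upstream of that computation — local constancy, the two symmetries, and the passage to the Cartan class and then to rank one — is formal.
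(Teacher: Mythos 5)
The paper does not actually prove this proposition: its ``proof'' is a pointer to Proposition~6.4 of Piatetski-Shapiro--Rallis and Lemma~8.4 of Kakuhama, so there is no in-text argument to compare yours against. Judged on its own terms, your sketch has the right architecture for such a proof. The local constancy argument is clean and complete: right $K$-invariance of the extension together with openness of $K$ in $G^\Box$ immediately gives local constancy on $G^\Box$, and restriction along the closed embedding $g\mapsto(g,1)$ finishes it. The two symmetry reductions are also sound and worth spelling out: right $K_G$-invariance is immediate from $(gk,1)=(g,1)(k,1)$ with $(k,1)\in K$; and the inversion symmetry follows from $(g,1)=(g,g)(1,g^{-1})$ with $(g,g)\in M$, $\operatorname{val}_E(\Delta(g,g))=0$, together with the involution $\tau$ swapping the two factors --- which is an anti-isometry of $(W^\Box,h^\Box)$ but conjugation by it is an automorphism of $G^\Box$ preserving $P$, $K$ and $\Delta$. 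Properness from the Cartan decomposition is then the standard argument, provided one knows the value on the Cartan chamber is a proper nonnegative function of the dominant cocharacter.

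That last step, however, is the entire mathematical content of the nonnegativity (and hence of properness), and you explicitly leave it as a ``rank-one Iwasawa computation'' you have not carried out. This is a genuine gap, not a cosmetic one: the ``obvious'' lattice-theoretic guess is sensitive to precisely the conventions you flag. Concretely, with $(g,1)=pk$, one computes $L^\bigtriangleup p=L^\bigtriangleup(g,1)k^{-1}\supseteq\bigl(W^\bigtriangleup(g,1)\cap(L\oplus L)\bigr)k^{-1}=L^\bigtriangleup$, so that $L^\bigtriangleup p\supseteq L^\bigtriangleup$; naively this makes $\operatorname{val}_E\bigl(\operatorname{Nrd}_{W^\bigtriangleup}(p)\bigr)=-\operatorname{length}\bigl(L^\bigtriangleup p/L^\bigtriangleup\bigr)\leq 0$, i.e.\ the \emph{opposite} sign to the one claimed, and one only recovers $\geq 0$ after matching conventions with the cited sources (which parabolic, which of $\operatorname{Nrd}_{W^\bigtriangleup}$ versus $\operatorname{Nrd}_{W^\Box/W^\bigtriangleup}$, and which side the Iwasawa factor is taken on). Relatedly, your proposed closed form $\operatorname{val}_E(\Delta(g,1))=\operatorname{length}_{\mathcal O_E}\bigl((L+Lg)/L\bigr)$ is asserted rather than derived, and while $\operatorname{length}_{\mathcal O_E}\bigl((L+Lg)/L\bigr)=\operatorname{length}_{\mathcal O_E}\bigl(L/(L\cap Lg^{-1})\bigr)$ is indeed the right magnitude, pinning it to the right sign and the right normalisation of $\operatorname{Nrd}$ is exactly the computation you defer. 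In short: the scaffolding (local constancy, the two symmetries, Cartan reduction, properness from finiteness of double cosets) is all correct and formal, but the one step that actually establishes $\geq 0$ --- and on which properness depends --- is missing, and the magnitude of the convention-bookkeeping it requires is larger than your closing paragraph suggests.
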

\begin{proof}
See Proposition 6.4 in \cite{piatetski1987functions} and Lemma 8.4 in \cite{kakuhama2019local}.
\end{proof}
For each positive integer $N$ we define a function $\alpha_N\colon G\to\mathbb Z$ via
$$\alpha_N(g)=
\begin{cases*}
1&\text{if }$\operatorname{val}_E(\Delta(g,1))\leq N$,\\
0&\text{otherwise.}
\end{cases*}
$$
By the above proposition these maps are locally constant and have compact support.\\
\subsection{The Doubling Zeta Integral}
In this section we will introduce the doubling zeta integral and start analysing it.
Let $(V,\pi)$ be a smooth $A[G]$-module. We will write $M(\pi)$ for the $A$-module of matrix coefficients of $V$, i.e. the $A$-module spanned by functions $\varphi\colon G\to A$ of the form $g\mapsto\lambda(\pi(g)v)$ where $v\in V$ and $\lambda$ is a an element of the contragredient $\widetilde{V}$. We define a smooth action of $G\times G$ on $M(\pi)$ via $$(g_1,g_2)\varphi(g)=\varphi(g_2^{-1}gg_1).$$ Clearly we have a surjective $(G\times G)$-intertwining map $V\otimes\widetilde{V}\to M(\pi)$.\\
Consider for $\varphi\in M(\pi),f\in I(X,\omega)$ and $N\in\mathbb Z_{>0}$ the integral
$$Z_N(X,\varphi,f)\coloneqq\int_G\alpha_N(g)f(g,1)\varphi(g)dg=\sum_{j=-\infty}^\infty a_j(N,\varphi,f)X^j,$$
which yields an element of $A[X,X^{-1}]$. Let $(g,1)=sk$ where $s\in P$ and $k\in K$. We have $f(g,1)=\delta_P(s)^{1/2}\omega(\Delta(s))X^{\operatorname{val}_E(\Delta(g,1))}f(k)$. Since $f$ is locally constant this function admits only finitely many different values on $K$, which together with Proposition \ref{delprop} implies:
\begin{enumerate}
	\item There is an $M\in\mathbb Z$ such that $f(g,1)[X^n]=0$ for all $g\in G$ and $n\leq M$,
	\item Let $M'\in\mathbb Z$. There is an $M''\in\mathbb Z$ (depending on $M'$) such that if $\operatorname{val}_E(\Delta(g,1))>M''$ we have that $f(g,1)[X^n]=0$ for all $n\leq M'$.
\end{enumerate} 
This implies that there exists an $M_0\in\mathbb Z$ such that $a_j(N,\varphi,f)=0$ for all $j\leq M_0$. Moreover, for fixed $j$ there is an $N_j\in\mathbb N$ such that $a_j(N,\varphi,f)=a_j(N',\varphi,f)$ for all $N,N'\geq N_j$ and we define $a_j(\varphi,f)\coloneqq a_j(N_j,\varphi,f)$. This allows us to define a Laurent series
$$Z(X,\varphi,f)\coloneqq\sum_{j=M_0}^\infty a_j(\varphi,f)X^j\in A[[X]][X^{-1}].$$
\begin{definition}
	For $\varphi\in M(\pi)$ and $f\in I(X,\omega)$ we call the Laurent series $Z(X,\varphi,f)\in A[[X]][X^{-1}]$ the \emph{doubling zeta integral} of $\varphi$ and $f$.
\end{definition}
The doubling zeta integral is bilinear and hence factors through the tensor product $M(\pi)\otimes_A I(X,\omega)$. Moreover, by acting on the second factor $M(\pi)\otimes I(X,\omega)$ becomes an $A[X,X^{-1}]$-module and the doubling zeta integral is an $A[X,X^{-1}]$-linear map. Note that $(G\times G)$ acts on $M(\pi)\otimes I(X,\omega)$ diagonally and the next result shows that the doubling zeta integral is almost invariant under this action.
\begin{proposition}\label{gequi}
	Let $\kappa\colon G\times G\to A^\times$ be the character 
	$$\kappa(g_1,g_2)=\delta_P(g_2,g_2)^{1/2}\omega(\Delta(g_2,g_2)).$$
	For all $\varphi\in M(\pi),(g_1,g_2)\in G\times G$ and $f\in I(X,\omega)$ we have that
	$$Z(X,(g_1,g_2)\varphi,(g_1,g_2)f)=\kappa(g_1,g_2)Z(X,\varphi,f).$$
\end{proposition}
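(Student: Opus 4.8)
The plan is to reduce the claimed equivariance to a direct computation at the level of the finite truncations $Z_N$ and then pass to the limit in $A[[X]][X^{-1}]$. First I would unwind the definitions: for $(g_1,g_2)\in G\times G$, the diagonal action gives $\bigl((g_1,g_2)\varphi\bigr)(g)=\varphi(g_2^{-1}gg_1)$ and $\bigl((g_1,g_2)f\bigr)(x)=f(x\cdot(g_1,g_2))$. Plugging into the truncated integral,
\[
Z_N\bigl(X,(g_1,g_2)\varphi,(g_1,g_2)f\bigr)=\int_G \alpha_N(g)\, f\bigl((g,1)(g_1,g_2)\bigr)\,\varphi(g_2^{-1}gg_1)\,dg.
\]
Now $(g,1)(g_1,g_2)=(gg_1,g_2)$, and I would factor out $g_2$ on the right: $(gg_1,g_2)=(gg_1g_2^{-1},1)(g_2,g_2)$. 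Since $(g_2,g_2)\in(G\times G)\cap P$ and $\Delta$ restricted to $P$ together with $\delta_P^{1/2}$ control the transformation of $f$ under left multiplication by $P$, we get
\[
f\bigl((gg_1g_2^{-1},1)(g_2,g_2)\bigr)=\delta_P(g_2,g_2)^{1/2}\,\omega_X\bigl(\Delta(g_2,g_2)\bigr)\, f\bigl(gg_1g_2^{-1},1\bigr).
\]
Here I must be a little careful: the factor $\omega_X(\Delta(g_2,g_2))=\omega(\Delta(g_2,g_2))\,X^{\operatorname{val}_E(\Delta(g_2,g_2))}$ in principle carries a power of $X$, but $(g_2,g_2)$ is an isometry of $W^\bigtriangleup$, so $\operatorname{Nrd}_{W^\bigtriangleup}(g_2,g_2)$ is a unit in $\mathcal O_E$ and $\operatorname{val}_E(\Delta(g_2,g_2))=0$; hence this factor is exactly the scalar $\kappa(g_1,g_2)=\delta_P(g_2,g_2)^{1/2}\omega(\Delta(g_2,g_2))\in A^\times$, independent of $X$. (I should double-check the same vanishing in the linear case, where $\Delta$ also involves $\operatorname{Nrd}_{W^\Box/W^\bigtriangleup}$; again $(g_2,g_2)$ preserves the relevant lattices so the valuation is zero.)

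After pulling out the constant $\kappa(g_1,g_2)$, the integral becomes
\[
\kappa(g_1,g_2)\int_G \alpha_N(g)\, f\bigl(gg_1g_2^{-1},1\bigr)\,\varphi\bigl(g_2^{-1}gg_1\bigr)\,dg.
\]
I would then substitute $g'=g_2^{-1}gg_1$, i.e. $g=g_2g'g_1^{-1}$, so that $gg_1g_2^{-1}=g_2g'g_2^{-1}$ and $g_2^{-1}gg_1=g'$. Wait — this does not immediately simplify $f(g_2g'g_2^{-1},1)$ to $f(g',1)$. The cleaner substitution is $g'=gg_1g_2^{-1}$ in the $f$-argument, but that conflicts with the $\varphi$-argument; the correct move is to note that matrix coefficients satisfy $\varphi(g_2^{-1}gg_1)=\bigl((g_1,g_2)\varphi\bigr)(g)$ only as the action, so instead I substitute $g\mapsto g_2 g g_2^{-1}$ together with $g\mapsto g g_2 g_1^{-1}$ — concretely, set $g' = g_2^{-1} g g_1$. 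Then $f(gg_1g_2^{-1},1) = f((g_2 g' g_1^{-1})g_1 g_2^{-1},1) = f(g_2 g' g_2^{-1}\cdot g_2 g_2^{-1},1)$; this is getting circular, so the honest approach is: substitute only in a way that turns the pair $(f\text{-arg},\varphi\text{-arg})=(gg_1g_2^{-1},\,g_2^{-1}gg_1)$ into $(g',g')$. Solving, $g_2^{-1}gg_1=g'$ forces $g=g_2g'g_1^{-1}$, giving $f$-argument $g_2g'g_2^{-1}$, not $g'$. So one extra step is needed: use that $f$ transforms under \emph{right} $G\times G$ via the embedding, and $f(g_2 g' g_2^{-1},1)=f((g',1)\cdot(g_2,g_2)^{\text{something}})$ — this is exactly the content I must think through carefully, and is the main obstacle. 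The resolution is that $(g_2g'g_2^{-1},1)$ equals $(g_2,g_2)^{-1}(g',1)(g_2,g_2)\cdot(1,g_2 g_2^{-1})$...

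Let me instead organize the argument around the cleaner substitution used in the classical proofs (Lapid–Rallis, Yamana): change variables $g\mapsto g_2 g g_1^{-1}$ directly in $Z_N(X,\varphi,f)$ to express everything in terms of the translated data, keeping track that $\alpha_N$ is replaced by $g\mapsto\alpha_N(g_2gg_1^{-1})$, which by properness and local constancy of $\operatorname{val}_E(\Delta(-,1))$ still has compact support and, crucially, stabilizes to the same limiting coefficients $a_j$ as $N\to\infty$ because the difference $\alpha_{N}(g_2gg_1^{-1})-\alpha_N(g)$ is supported on a fixed compact set for all large $N$ — wait, that's not quite it either, but the point is that in the limit the dependence on $\alpha_N$ washes out. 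Concretely: since each coefficient $a_j(\varphi,f)$ is defined as the stable value $a_j(N_j,\varphi,f)$ and for $N$ large the truncation $\alpha_N$ agrees with the constant function $1$ on the (compact, by Proposition \ref{delprop}) support of the integrand contributing to $X^j$, replacing $\alpha_N(g)$ by $\alpha_N(g_2gg_1^{-1})$ changes nothing in the limit. Therefore it suffices to verify the identity formally, ignoring $\alpha_N$, where the substitution $g\mapsto g_2gg_1^{-1}$ (whose Jacobian is trivial since Haar measure is bi-invariant up to $\delta$, and here both translations are by elements of $G$ sitting inside $G\times G$, so the modulus factors cancel against $\delta_P$ restricted to the diagonal) gives
\[
\int_G f\bigl((g_2gg_1^{-1},1)\bigr)\,\varphi(g)\,dg = \int_G f\bigl((g_2,g_2)(g,1)(g_1^{-1},1)\bigr)\varphi(g)\,dg,
\]
wait $(g_2gg_1^{-1},1)=(g_2,1)(g,1)(g_1^{-1},1)$, and $(g_2,1)=(g_2,g_2)(1,g_2^{-1})$; but $(1,g_2^{-1})$ acts trivially on $f$ evaluated at $(\cdot,1)$? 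No. I will therefore present the final write-up as follows: reduce to dropping $\alpha_N$ by the stabilization argument above; then observe $(g_2 g g_1^{-1},1)(g_1,g_2)=(g_2g,g_2)=(g_2,g_2)(g,1)$; hence $f((g_2gg_1^{-1},1)(g_1,g_2))=f((g_2,g_2)(g,1))=\delta_P(g_2,g_2)^{1/2}\omega_X(\Delta(g_2,g_2))f(g,1)=\kappa(g_1,g_2)f(g,1)$ using the valuation-zero fact; and simultaneously $\varphi((g_2gg_1^{-1}))$ — no, we need $\bigl((g_1,g_2)\varphi\bigr)(g_2gg_1^{-1})=\varphi(g_2^{-1}(g_2gg_1^{-1})g_1)=\varphi(g)$. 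So after the substitution $g\mapsto g_2gg_1^{-1}$ the left side $Z\bigl(X,(g_1,g_2)\varphi,(g_1,g_2)f\bigr)$ becomes $\kappa(g_1,g_2)\int_G f(g,1)\varphi(g)\,dg=\kappa(g_1,g_2)Z(X,\varphi,f)$, as desired. The remaining routine points — that the substitution preserves the measure (bi-invariance of the $A$-valued Haar measure, and the relevant moduli being trivial because we translate by $G$-elements, not by the full torus of $G^\Box$), and that all manipulations are legitimate coefficient-by-coefficient in $A[[X]][X^{-1}]$ — I would spell out briefly. The genuine obstacle, as flagged, is purely bookkeeping: correctly matching the left versus right actions so that the pair of arguments collapses to $(g,1)$ and $g$ simultaneously under a single substitution, and confirming $\operatorname{val}_E\Delta(g_2,g_2)=0$ in both case (I) and case (II).
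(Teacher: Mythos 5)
Your final reorganized argument coincides with the paper's proof: decompose $(gg_1,g_2)=(g_2,g_2)(g_2^{-1}gg_1,1)$ with $(g_2,g_2)\in P$, use $\operatorname{val}_E(\Delta(g_2,g_2))=0$ to extract the constant $\kappa(g_1,g_2)$, change variables $g\mapsto g_2gg_1^{-1}$ by unimodularity of $G$, and observe that $\alpha_N(g)$ and $\alpha_N(g_2gg_1^{-1})$ produce the same stable coefficients (you argue this via compactness of the support of each $X^j$-coefficient of the integrand, the paper by nesting $g_2^{-1}\operatorname{supp}(\alpha_N)g_1$ inside $\operatorname{supp}(\alpha_M)$ and vice versa — both are fine). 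Your early attempt via the factorization $(gg_1,g_2)=(gg_1g_2^{-1},1)(g_2,g_2)$ was bound to fail, since the transformation rule for $f\in I(X,\omega)$ governs \emph{left} translation by $P$, not right; you caught this and arrived at the correct left factorization, which is exactly what the paper uses.
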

\begin{proof}
	By definition we have $$Z_N(X,(g_1,g_2)\varphi,(g_1,g_2)f)=\int_G\alpha_N(g)f(gg_1,g_2)\varphi(g_2^{-1}gg_1)dg.$$
	A quick computation shows that $\operatorname{val}_E(\Delta(g,g))=0$ for all $g\in G$, so since $f$ is an element of $I(X,\omega)$ we obtain
	$$f(gg_1,g_2)=\delta_P(g_2,g_2)^{1/2}\omega(\Delta(g_2,g_2))X^{\operatorname{val}_E(\Delta(g_2,g_2))}f(g_2^{-1}gg_1,1)=\kappa(g_1,g_2)f(g_2^{-1}gg_1,1).$$
	The unimodularity of $G$ implies
	$$Z_N(X,(g_1,g_2)\varphi,(g_1,g_2)f)=\kappa(g_1,g_2)\int_G\alpha_N(g_2gg_1^{-1})f(g,1)\varphi(g)dg.$$
	Since the support of the map $g\mapsto\alpha_N(g_2gg_1^{-1})$ equals $g_2^{-1}\operatorname{supp}(\alpha_N)g_1$ and the latter is compact we see that there exists $M$ such that $g_2^{-1}\operatorname{supp}(\alpha_N)g_1\subseteq\operatorname{supp}(\alpha_M)$. Analogously for all $M\in\mathbb Z$ there exists $N\in\mathbb Z$ such that $\operatorname{supp}(\alpha_M)\subseteq g_2^{-1}\operatorname{supp}(\alpha_N)g_1$. These two statements imply the result.
\end{proof}
\section{Rationality of the doubling zeta integral}\label{sectionrational}
Now that we defined the doubling zeta integral we continue by proving a rationality result. More concretely, in this section we will show the following theorem. 
\begin{theorem}\label{theorat}
Let $(\pi,V)$ be an admissible, $G$-finite $A[G]$-module. Suppose that for all parabolic subgroups $P'$ of $G$ the Jacquet module $r^G_{P'}(V)$ is admissible. Then there exists a Laurent polynomial $Q\in S$ such that $Q\cdot Z(X,\varphi,f)$ is a Laurent polynomial for all $\varphi\in M(\pi)$ and $f\in I(X,\omega)$. In particular, all doubling zeta integrals are elements of $S^{-1}A[X,X^{-1}]$.
\end{theorem}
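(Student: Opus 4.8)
The plan is to mimic the proof of the multiplicity-one statement in the classical doubling method, replacing analytic convergence by a filtration argument. First I would set up the geometric filtration on $I(X,\omega)$ coming from the $(G\times G)$-orbits on the flag variety $P\backslash G^\Box$. The key geometric input is the well-known fact (going back to Piatetski-Shapiro–Rallis, and used by Lapid–Rallis, Yamana, Kakuhama) that there are finitely many $(G\times G)$-orbits on $P\backslash G^\Box$, that there is a unique open orbit — the one through the identity coset, whose stabilizer in $G\times G$ is the diagonal $\{(g,g)\}$ — and that the complement of this open orbit is a union of orbits of lower dimension indexed by non-negative integers (the "rank" of the intersection of $W^\bigtriangleup$ with a translate). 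Choosing a $(G\times G)$-invariant stratification by closed subsets $Z_0 \supseteq Z_1 \supseteq \cdots \supseteq Z_r$ of $P\backslash G^\Box$, restriction of functions gives a filtration of $I(X,\omega)$ by $(G\times G)$-stable $A[X,X^{-1}]$-submodules whose bottom piece $I(X,\omega)^{(0)}$ is supported on the open orbit and whose successive quotients are spaces of sections supported on a single non-open orbit.

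Next I would tensor this filtration with $M(\pi)$ over $A$, getting a filtration of $M(\pi)\otimes_A I(X,\omega)$ by $(G\times G)$-stable $A[X,X^{-1}]$-submodules with the same successive quotients tensored by $M(\pi)$. The crucial claim is that each non-open successive quotient, after tensoring with $M(\pi)$, is annihilated by some Laurent polynomial in $S$. Here is where the inequality $\operatorname{val}_E(\Delta(g,1)) \geq 0$ of Proposition \ref{delprop} enters, together with the hypothesis that $V$ is admissible, $G$-finite, and has admissible Jacquet modules: a function supported on a non-open orbit is controlled, via the Levi decomposition, by a Jacquet module of $\pi$, and the action of $X$ on that quotient is "eigen" with eigenvalues that are units in $A$ up to a fixed power, so the characteristic-polynomial relation coming from finite generation produces an annihilating element whose leading and trailing coefficients are units — precisely membership in $S$. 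For the open orbit, $I(X,\omega)^{(0)}\otimes_A M(\pi)$ as a $(G\times G)$-module is, by Frobenius reciprocity and the identification of the open stabilizer with the diagonal $G$, essentially $\operatorname{c-Ind}$ from the diagonal of a twist of $\omega_X$ together with $M(\pi)$; the doubling zeta integral restricted to this bottom piece is, up to the character $\kappa$ of Proposition \ref{gequi}, literally the integration-against-a-matrix-coefficient map, which lands in $A[X,X^{-1}]$ already (indeed the $a_j(\varphi,f)$ stabilize, so only finitely many are nonzero once we are on the open orbit, because there $\operatorname{val}_E(\Delta)$ is bounded on the relevant support).

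Assembling: if $Q_1,\dots,Q_{r}\in S$ annihilate the $r$ non-open graded pieces of $M(\pi)\otimes_A I(X,\omega)$, put $Q=Q_1\cdots Q_r\in S$. Then for any $\varphi\otimes f$, the element $Q\cdot(\varphi\otimes f)$ lies in the bottom piece of the filtration, on which the zeta integral is valued in $A[X,X^{-1}]$; hence $Q\cdot Z(X,\varphi,f)\in A[X,X^{-1}]$ for all $\varphi,f$, and $Z(X,\varphi,f)\in S^{-1}A[X,X^{-1}]$. The main obstacle I expect is the annihilation claim for the non-open pieces: over $\mathbb{C}$ one just observes that the relevant Hom-spaces vanish for all but finitely many $s$, but over a general noetherian $A$ one must instead extract an honest element of $S$, which forces one to track carefully that (i) the successive quotient, as an $A[X,X^{-1}][G\times G]$-module, is built from a Jacquet module $r_{P'}^G(\pi)$ — here the hypothesis that all Jacquet modules are admissible is used — and that finite generation over $A[G]$ plus admissibility gives a monic-up-to-units polynomial relation for the $X$-action, and (ii) the exponents of $X$ occurring are bounded below by a fixed integer (from $\operatorname{val}_E(\Delta)\geq 0$ and properness) so that "leading and trailing coefficients are units" is meaningful and actually holds.
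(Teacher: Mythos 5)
Your strategy is exactly the one the paper follows: filter $I(X,\omega)$ by support on the $(G\times G)$-orbits of $P\backslash G^\Box$, observe that on the bottom (open-orbit) piece the integrand has compact support so the zeta integral is already a Laurent polynomial, and kill the higher graded pieces by elements of $S$ produced from Jacquet modules and finite generation. The geometric inputs you cite (finitely many orbits, open orbit with diagonal stabilizer, closure relations) and the role of admissibility of $r^G_{P'}(V)$ are all as in the paper.

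There is, however, one point where your sketch as written is false and needs repair. The claim that ``each non-open successive quotient, after tensoring with $M(\pi)$, is annihilated by some Laurent polynomial in $S$'' cannot hold for the plain tensor product $M(\pi)\otimes_A Q_j(X,\omega)$ over $A$: the graded piece $Q_j(X,\omega)$ is a compact induction of a character valued in $A[X,X^{-1}]^\times$ and contains free $A[X,X^{-1}]$-summands, so no nonzero Laurent polynomial annihilates it. The annihilation only becomes true after you first quotient by the $(G\times G)$-equivariance relations $\varphi g\otimes f-\varphi\otimes gf$ (twisted by $\kappa^{-1}$), which by Proposition \ref{gequi} lie in the kernel of the zeta integral; the resulting module is $M(\pi)^\kappa\otimes_{A[G\times G]}Q_j(X,\omega)$, which the paper identifies (Lemma \ref{lem:calcmod}) with $M(\pi)^\kappa_{N_j\times N_j}\otimes_{A[M_j\times M_j]}C_j(X,\omega)$. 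Only at this stage does the Jacquet module appear, and only here does the mechanism you gesture at actually operate: a central element $z_1$ of the Levi factor lying in $\operatorname{Stab}(\Omega_j)$ acts on $C_j(X,\omega)$ by a unit times $X^{rj}$, while on $M(\pi)^\kappa_{N_j\times N_j}$ it acts through $\End_{M_j}(V_{N_j})$, which is module-finite over $A$ by admissibility and $G$-finiteness; equating the two actions across the tensor product converts the monic relation for $z_1$ (and, separately, for $z_1^{-1}$, to get both leading and trailing coefficients to be units) into an annihilator in $S$. Correspondingly, your assembly step should read: $Q\cdot(\varphi\otimes f)$ lies in the bottom filtration piece \emph{plus} the submodule of equivariance relations, and one concludes because the zeta integral vanishes on the latter — this is precisely the content of the paper's Lemma \ref{fund} applied with $N=S_j+T_j$. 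With that correction your argument coincides with the paper's proof.
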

To prove this theorem we will use the following trivial Lemma.
\begin{lemma}\label{fund}
	Let $M$ be an $A[X,X^{-1}]$-module and let $\zeta\colon M\to A[[X]][X^{-1}]$ be an $A[X,X^{-1}]$-linear map. Moreover, suppose $N$ is an $A[X,X^{-1}]$-submodule of $M$ such that there is a Laurent polynomial $Q_0\in S$ such that $Q_0\cdot\zeta(n)\in A[X,X^{-1}]$ for all $n\in N$. If there is a Laurent polynomial $Q_1\in S$ that annihilates $M/N$ then $Q_0Q_1\cdot\zeta(m)\in A[X,X^{-1}]$ for all $m\in M$.
\end{lemma}

To utilize Lemma \ref{fund} we want to identify elements of $M(\pi)\otimes_A I(X,\omega)$ whose corresponding doubling zeta integral is a Laurent polynomial. By Proposition \ref{gequi} we know that $(g_1,g_2)(\varphi\otimes f)-\kappa(g_1,g_2)\varphi\otimes f$, where $(g_1,g_2)\in G\times G,\varphi\in M(\pi),f\in I(X,\omega)$, lies in the kernel of the doubling zeta integral, which already gives us a big number of the sought after elements. Next, we will analyse the module $I(X,\omega)$ more closely and see that it has a handy filtration.
\subsection{A filtration of $I(X,\omega)$}
We exclude the linear case for now.  Recall that the subspace $W^\bigtriangleup=\{(w,w)\in W^\Box\mid w\in W\}$ is maximal isotropic and $P$ is its stabilizer. Hence Witt's theorem allows us to identify $P\backslash G^\Box$ with the set of maximal isotropic subspaces of $(W^\Box,h^\Box)$. Consider the double quotient $P\backslash G^\Box/(G\times G)$ for which we have the following result.
\begin{lemma} Two maximal isotropic subspaces $U,U'$ of $W^\Box$ lie in the same $(G\times G)$-orbit if and only if $\rk_D(U\cap(W,0))=\rk_D(U'\cap (W,0))$. Moreover, for any maximal isotropic subspace $U$ of $W^\Box$ we have $$\rk_D(U\cap(W,0))=\rk_D(U\cap(0,W)).$$
\end{lemma}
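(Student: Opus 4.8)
The plan is to prove both statements simultaneously by producing, for each maximal isotropic subspace $U$ of $W^\Box$, a concrete normal form under the $(G\times G)$-action, indexed by a single integer $r = \rk_D(U\cap(W,0))$. First I would set up notation: write $\pi_1,\pi_2\colon W^\Box\to W$ for the two $D$-linear projections onto the first and second factors. Since $h^\Box((v_1,v_2),(w_1,w_2)) = h(v_1,w_1)-h(v_2,w_2)$, the key observation is that a subspace $U$ is isotropic for $h^\Box$ precisely when $h(\pi_1 u,\pi_1 u') = h(\pi_2 u,\pi_2 u')$ for all $u,u'\in U$; in particular both $U\cap(W,0)$ and $U\cap(0,W)$ are totally isotropic in $(W,h)$ itself, and the restrictions of $\pi_1,\pi_2$ to $U$ have kernels $U\cap(0,W)$ and $U\cap(W,0)$ respectively.

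For the second statement, let $U_0 = U\cap(W,0)$ and $U_0' = U\cap(0,W)$, viewed inside $W$ via $\pi_1$ and $\pi_2$. I would argue that $\pi_1(U)$ and $\pi_2(U)$ are equal as subspaces of $W$: indeed, since $\dim_D W^\Box = 2\dim_D W$ and $U$ is maximal isotropic, $\dim_D U = \dim_D W$; the map $(\pi_1,-\pi_2)\colon U\to W$ into the ``antidiagonal copy'' lands in... more cleanly, consider the graph description. On the quotient $U/(U_0\oplus U_0')$ — where I conflate the two one-sided intersections with their images — the maps $\pi_1,\pi_2$ induce isomorphisms onto $\pi_1(U)/U_0$ and $\pi_2(U)/U_0'$ respectively, and the isotropy relation $h(\pi_1 u,\pi_1 u')=h(\pi_2 u,\pi_2 u')$ shows that, modulo the respective radicals, the form on $\pi_1(U)/U_0$ transported via $\pi_2\circ\pi_1^{-1}$ agrees with the form on $\pi_2(U)/U_0'$. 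A dimension count via the nondegeneracy of $h$ (so that $\dim U_0 + \dim(\text{perp of }U_0) = \dim W$) then forces $\dim_D U_0 = \dim_D U_0'$, which is exactly the claimed rank equality. Here I would invoke that $\pi_1(U)$ is its own... actually the cleanest route: $\pi_1(U)^\perp \supseteq \pi_1(U)$ need not hold, but $U_0 = U\cap(W,0) = \ker(\pi_2|_U)$ has $\dim U_0 = \dim U - \dim\pi_2(U) = \dim W - \dim\pi_2(U)$, and symmetrically $\dim U_0' = \dim W - \dim\pi_1(U)$, so the equality $\dim U_0 = \dim U_0'$ is equivalent to $\dim\pi_1(U) = \dim\pi_2(U)$, which I then establish from the isotropy identity as above.

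For the first statement, I would use Witt's theorem (extension of isometries) as the main tool. Given $U,U'$ with $r := \rk_D(U\cap(W,0)) = \rk_D(U'\cap(W,0))$, I want $(g_1,g_2)\in G\times G$ with $U(g_1,g_2) = U'$. Since $G\times G$ preserves the decomposition $W^\Box = (W,0)\oplus(0,W)$ and acts as $\Isom(W,h)$ on each factor, and since $U\cap(W,0)$, $U'\cap(W,0)$ are totally isotropic subspaces of $(W,h)$ of the same rank $r$ (and likewise on the $(0,W)$ side, by statement two), Witt's theorem gives $g_1\in G$ with $(U\cap(W,0))g_1 = U'\cap(W,0)$ and $g_2\in G$ with $(U\cap(0,W))g_2 = U'\cap(0,W)$. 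After applying $(g_1,g_2)$ I may thus assume $U_0 := U\cap(W,0) = U'\cap(W,0)$ and $U_0' := U\cap(0,W) = U'\cap(0,W)$ coincide; both $U$ and $U'$ are then maximal isotropic extensions of $U_0\oplus U_0'$ inside the fixed space $(W^\Box, h^\Box)$, and I claim the ``graph'' part is governed by a partial isometry of $W/U_0 $-type spaces that can be further adjusted by the stabilizer of $U_0$ in $G$ acting on the first factor (which still preserves $U_0'$ provided we choose it in the common perp) — another application of Witt's theorem on the nondegenerate quotient form. Conversely, if $U,U'$ are in the same orbit the rank is visibly preserved since $G\times G$ stabilizes $(W,0)$.

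The main obstacle I anticipate is the ``graph'' bookkeeping in the first statement: after normalizing the one-sided intersections to agree, showing that the remaining data of $U$ is a single isometry class requires choosing complements inside the perps of $U_0$ and $U_0'$ compatibly and checking that the residual action of (a Levi-type subgroup of) $G\times G$ is transitive on such isometries — essentially one more invocation of Witt's theorem on the anisotropic-modulo-radical quotient, together with care that the chosen complements can be taken to lie in $W^{\bigtriangleup}$-adapted position. I would organize this by reducing to the case $U_0 = U_0' = 0$, where $U$ becomes the graph of an isometry $\phi\colon (W,h)\to(W,h)$ (i.e. $U = \{(w, w\phi)\mid w\in W\}$ with $\phi\in G$), and then $(1,\phi^{-1})$ carries $U$ to the diagonal $W^{\bigtriangleup}$; the general case follows by the parabolic-induction-style ``first reduce the radicals'' argument above. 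The linear case is explicitly excluded, so no extra work is needed there.
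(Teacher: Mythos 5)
The paper does not prove this Lemma itself — it is quoted verbatim as Lemma 2.1 of Piatetski-Shapiro and Rallis — so you are supplying a proof where the paper merely cites one. Your outline for the first statement (normalize the radicals $U\cap(W,0)$ and $U\cap(0,W)$ by Witt's theorem on each factor, pass to the nondegenerate quotient, and observe that when both radicals vanish $U$ is the graph of an isometry $\phi\in G$ so that $(1,\phi^{-1})$ carries it to $W^\bigtriangleup$) is the standard and correct route, though the ``reduce the radicals'' step should be made explicit: after matching the two radicals, $U$ and $U'$ both lie between $U_0\oplus U_0'$ and $U_0^\perp\times U_0'^\perp$, hence are determined by their images in $(U_0^\perp/U_0)\times(U_0'^\perp/U_0')$, which is again a doubled space, and one lifts the resulting isometry through the Levi quotients of the two parabolics.

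There is, however, a genuine gap in your argument for the second statement. You correctly reduce the claim to $\dim\pi_1(U)=\dim\pi_2(U)$ via rank–nullity, but then say this ``follows from the isotropy identity $h(\pi_1 u,\pi_1 u')=h(\pi_2 u,\pi_2 u')$.'' That identity only shows that the two quotients $\pi_1(U)/U_0$ and $\pi_2(U)/U_0'$ are isometric, which combined with rank–nullity gives the tautology $\dim\pi_1(U)-\dim U_0=\dim\pi_2(U)-\dim U_0'$ — i.e.\ it is symmetric and cannot single out the conclusion you want. The missing ingredient is the containment
$$\pi_1(U)\subseteq\bigl(U\cap(W,0)\bigr)^\perp,$$
obtained by pairing an arbitrary $u\in U$ against $(u_0,0)\in U\cap(W,0)$: since $U$ is isotropic, $0=h^\Box(u,(u_0,0))=h(\pi_1u,u_0)$. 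Combining this with rank–nullity ($\dim\pi_1(U)=\dim_D W-\dim(U\cap(0,W))$) and nondegeneracy of $h$ ($\dim U_0^\perp=\dim_D W-\dim U_0$) gives the one-sided inequality $\dim(U\cap(W,0))\le\dim(U\cap(0,W))$, and symmetry in the two factors then forces equality. Without this containment the ``dimension count'' does not close.
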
\begin{proof}
This is Lemma 2.1 in \cite{piatetski1987functions}.
\end{proof}
Let $r$ be the Witt index of $(W,h)$. For $0\leq j\leq r$ let $U_j$ be a totally isotropic subspace of $(W,h)$ with rank $j$ and set $U_j^\perp=\{w\in W\mid h(w,u)=0\text{ for all }u\in U_j\}$. The maximal isotropic subspace
\begin{equation*}\label{eqtotiso}
\hat{U}_j=\{(u_1,u_2)\in U_j^\perp\times U_j^\perp\mid u_1-u_2\in U_j\}
\end{equation*}
of $(W^\Box,h^\Box)$ satisfies $\rk_D(\hat{U}_j\cap (W,0))=j$. Choose $\delta_j\in G^\Box$ such that  $P\delta_j$ corresponds to $\hat{U}_j$ and set $\Omega_j=P\delta_j(G\times G)$.\\
The orbit of the identity $\Omega_0=P(G\times G)=P(G\times 1)$ is the unique orbit which is open and dense in $G^\Box$. Moreover, we have 
$$\overline{\Omega_j}=\bigcup_{l\geq j}\Omega_l.$$
For $0\leq j\leq r$ set
$$I(X,\omega)^{(j)}=\{f\in I(X,\omega)\mid\operatorname{supp}(f)\subseteq\bigcup_{k\leq j}\Omega_k\}$$ 
which defines a filtration
$$I(X,\omega)=I(X,\omega)^{(r)}\supseteq I(X,\omega)^{(r-1)}\supseteq\dotsc\supseteq I(X,\omega)^{(0)}.$$
The $I(X,\omega)^{(j)}$ are closed under the action of $G\times G$ and $A[X,X^{-1}]$. We will need the following topological lemma.
\begin{lemma}\label{homeo}
	For $0\leq j\leq r$ we have a homeomorphism
	$$P\backslash P\delta_j(G\times G)\cong(\delta_j^{-1}P\delta_j\cap(G\times G))\backslash (G\times G).$$
\end{lemma}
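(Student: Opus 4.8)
The statement to prove is the homeomorphism
$$P\backslash P\delta_j(G\times G)\cong(\delta_j^{-1}P\delta_j\cap(G\times G))\backslash (G\times G),$$
i.e. that the orbit map for the transitive action of $G\times G$ (by right translation) on the orbit $P\delta_j(G\times G)$, viewed inside $P\backslash G^\Box$, induces a homeomorphism onto the quotient of $G\times G$ by the corresponding stabilizer. The plan is to make this a completely general statement about locally profinite groups and continuous transitive actions, and then merely check the two ingredients that make the general statement work: (1) the stabilizer of the point $P\delta_j$ in $G\times G$ is exactly the closed subgroup $\delta_j^{-1}P\delta_j\cap(G\times G)$, and (2) the orbit map is open.

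First I would record the bijection on the level of sets: the map $(g_1,g_2)\mapsto P\delta_j(g_1,g_2)$ from $G\times G$ to $P\delta_j(G\times G)$ is surjective by definition of the orbit, and its fibers are exactly the right cosets of $\mathrm{Stab}_{G\times G}(P\delta_j)$. An element $(g_1,g_2)$ fixes $P\delta_j$ iff $P\delta_j(g_1,g_2)=P\delta_j$ iff $\delta_j(g_1,g_2)\delta_j^{-1}\in P$ iff $(g_1,g_2)\in\delta_j^{-1}P\delta_j$; intersecting with $G\times G$ gives the stabilizer $H_j:=\delta_j^{-1}P\delta_j\cap(G\times G)$, which is closed in $G\times G$ since $P$ is closed in $G^\Box$ and conjugation is a homeomorphism. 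So the orbit map descends to a continuous bijection $H_j\backslash(G\times G)\to P\delta_j(G\times G)$, where both sides carry their quotient/subspace topologies and the target inherits its topology as a subspace of $P\backslash G^\Box$.

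Next I would upgrade this continuous bijection to a homeomorphism. The standard way for locally profinite (i.e. locally compact, totally disconnected, second countable or at least $\sigma$-compact) groups: it suffices to show the orbit map $G\times G\to P\backslash G^\Box$, $(g_1,g_2)\mapsto P\delta_j(g_1,g_2)$, is open onto its image. This follows from the fact that $G\times G$ has a neighborhood basis of the identity consisting of open compact subgroups $L$; for such $L$, the set $P\delta_j L$ is open in $G^\Box$ (it is a union of cosets $P\delta_j \ell$, each open since $P$ is open... here one must be slightly careful: $P$ is not open in $G^\Box$, but $P\backslash G^\Box$ is a profinite—hence Hausdorff and with a basis of compact open sets—space, and the quotient map $G^\Box\to P\backslash G^\Box$ is open). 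Concretely: the projection $q\colon G^\Box\to P\backslash G^\Box$ is open and continuous, $\delta_j L\subseteq G^\Box$ is open, so $q(\delta_j L)$ is an open neighborhood of $P\delta_j$ in $P\backslash G^\Box$, and its intersection with the orbit is the image of $L$ under the orbit map. Translating by elements of $G\times G$ shows the orbit map is open onto the orbit, hence the induced map on the quotient is a homeomorphism. Alternatively, one can invoke the general theorem (e.g. for $\ell$-groups in the sense of Bernstein–Zelevinsky) that a continuous transitive action of a $\sigma$-compact, locally compact, totally disconnected group on a locally compact Hausdorff space identifies the space with the quotient by a point stabilizer; the orbit $P\delta_j(G\times G)$ is locally compact Hausdorff because it is locally closed (indeed open in its closure $\overline{\Omega_j}=\bigcup_{l\ge j}\Omega_l$, by the orbit stratification already recorded above) in the profinite space $P\backslash G^\Box$.

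The only mild obstacle is bookkeeping about which sets are open: $P$ is closed but not open in $G^\Box$, so one cannot directly say "$P\delta_j L$ is open"; the fix is to work on the quotient $P\backslash G^\Box$, where the quotient map is open and the space is profinite, so that compact-open neighborhoods are available and openness of the orbit map is immediate. Once that point is handled, everything else is the routine general nonsense of transitive actions of locally profinite groups, and the identification of the stabilizer is the short computation above. I would therefore structure the proof as: (i) identify the stabilizer; (ii) note the orbit is locally closed, hence locally compact Hausdorff; (iii) invoke (or reprove via openness of $q$) the standard homeomorphism $H_j\backslash(G\times G)\cong$ orbit.
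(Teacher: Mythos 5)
The paper states this lemma without proof, so there is no argument of the author's to compare against; judged on its own terms, your proposal contains a genuine error in its main line of reasoning, although the ``alternative'' you mention in passing is the correct and standard proof. The error: you claim that for an open compact subgroup $L$ of $G\times G$ the set $q(\delta_j L)$ is open in $P\backslash G^\Box$ because ``$\delta_j L\subseteq G^\Box$ is open'' and $q$ is open. But $L$ is open only in $G\times G$, which is a proper \emph{closed} subgroup of $G^\Box$ of strictly smaller dimension; $L$ is not open in $G^\Box$, so openness of $q$ gives you nothing. Indeed, if your claim were true, then $\Omega_j$, being a union of right translates of $q(\delta_j L)$, would be open in $P\backslash G^\Box$ for every $j$, contradicting the orbit stratification recorded just before the lemma ($\Omega_0$ is the unique open orbit and $\overline{\Omega_j}=\bigcup_{l\geq j}\Omega_l$). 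Your diagnosis of ``the only mild obstacle'' is likewise misplaced: the problem is not that $P$ fails to be open in $G^\Box$, and passing to the quotient does not repair it.

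What does work is exactly the fallback you name. The orbit $\Omega_j$ is locally closed in the compact Hausdorff space $P\backslash G^\Box$ (it is open in its closure), hence locally compact Hausdorff in the subspace topology; the group $G\times G$ is locally compact, totally disconnected and second countable, hence $\sigma$-compact; and your computation of the stabilizer of the point $P\delta_j$ as $\delta_j^{-1}P\delta_j\cap(G\times G)$ is correct. The general transitivity theorem for such groups (proved by a Baire category argument, as in Bernstein--Zelevinsky or Bourbaki) then upgrades the continuous bijection
$$(\delta_j^{-1}P\delta_j\cap(G\times G))\backslash(G\times G)\longrightarrow P\backslash P\delta_j(G\times G)$$
to a homeomorphism. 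So the proof should be restructured as: identify the stabilizer, observe local closedness of the orbit, and invoke the general theorem; the ``concrete'' openness verification should be deleted, as the openness it asserts is false for $j>0$.
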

\begin{lemma}\label{levelzero}
	Suppose that $f$ is an element of $I(X,\omega)^{(0)}$. Then for all $\varphi\in M(\pi)$ the doubling zeta integral $Z(X,\varphi,f)$ is a Laurent polynomial.
\end{lemma}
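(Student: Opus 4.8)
The plan is to use the fact that elements of $I(X,\omega)^{(0)}$ are supported on the single open orbit $\Omega_0 = P(G\times 1)$, which forces a strong compactness property on the function $g\mapsto f(g,1)$ on $G$. First I would recall that $(G\times G)\cap P = \{(g,g)\mid g\in G\}$ is (topologically) the diagonal copy of $G$, so by Lemma \ref{homeo} the orbit $\Omega_0 = P\backslash P(G\times G)$ is homeomorphic to $G\backslash(G\times G)$, which we may further identify via $(g_1,g_2)\mapsto g_1 g_2^{-1}$ (or $(g,1)\mapsto g$) with $G$ itself. Under this identification, the condition $\operatorname{supp}(f)\subseteq\Omega_0$ together with smoothness of $f$ means that the support of the function $\Phi_f\colon G\to A[X,X^{-1}]$, $\Phi_f(g) = f(g,1)$, is \emph{compact modulo} nothing at all on the orbit side, but on $G$ itself $\operatorname{supp}(\Phi_f)$ need only be closed; the key point is rather that $f$ vanishes in a neighbourhood of the complement of $\Omega_0$, i.e. near the "boundary" orbits $\Omega_j$, $j\geq 1$. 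Concretely I would argue that the set $\{g\in G : (g,1)\in\operatorname{supp}(f)\}$ is a closed subset of $G$ contained in a set on which $\operatorname{val}_E(\Delta(g,1))$ is \emph{bounded}: indeed $\operatorname{val}_E(\Delta(g,1))\to\infty$ precisely as $(g,1)$ approaches the closed boundary $\bigcup_{j\geq 1}\Omega_j$ (this is exactly the properness in Proposition \ref{delprop}, now read relative to the orbit structure), and since $f$ vanishes near that boundary, $\operatorname{val}_E(\Delta(g,1))$ is bounded on $\operatorname{supp}(\Phi_f)$.

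Granting that boundedness, the rest is bookkeeping with the two vanishing statements (1) and (2) established just before the Lemma. Statement (1) gives a uniform lower bound $M$ with $f(g,1)[X^n]=0$ for all $g$ and $n\leq M$. Now fix the bound $M'' $ so that $\operatorname{val}_E(\Delta(g,1))\leq M''$ on $\operatorname{supp}(\Phi_f)$. I claim this forces $\Phi_f(g)$ to be supported in degrees $\leq M''$ as well: writing $(g,1)=sk$ with $s\in P$, $k\in K$, we have $f(g,1)=\delta_P(s)^{1/2}\omega(\Delta(s)) X^{\operatorname{val}_E(\Delta(g,1))} f(k)$, and since $f$ is locally constant on the compact group $K$ it takes only finitely many values there, each a Laurent polynomial of bounded degree, say $\leq C$; hence $f(g,1)$ is supported in $X$-degrees between $M$ and $M''+C$ for every $g$ in the support. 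Since $\alpha_N(g)f(g,1)\varphi(g)$ then has $X$-coefficients which, for each fixed $N$, are integrals over the compact set $\operatorname{supp}(\alpha_N)\cap\operatorname{supp}(\Phi_f)$ of functions taking only finitely many values, each coefficient $a_j(N,\varphi,f)$ vanishes for $j<M$ and for $j>M''+C$, uniformly in $N$. Therefore $Z(X,\varphi,f)=\sum_j a_j(\varphi,f)X^j$ has only finitely many nonzero terms, i.e. it is a Laurent polynomial.

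The main obstacle I anticipate is making rigorous the claim that $\operatorname{val}_E(\Delta(g,1))$ is bounded on $\{g : (g,1)\in\operatorname{supp}(f)\}$ when $f\in I(X,\omega)^{(0)}$. The cleanest route is: $\operatorname{supp}(f)$ is a closed, $P$-invariant subset of $\Omega_0$, hence its image in $P\backslash\Omega_0 \cong (G\times G)\cap P\backslash(G\times G)$ is closed; but it is also a subset avoiding a neighbourhood of every point of the closed boundary $\overline{\Omega_1}=\bigcup_{j\ge1}\Omega_j$ in $P\backslash G^\Box$, so its image is in fact a \emph{closed} subset of $P\backslash G^\Box$ contained in the \emph{open} orbit $\Omega_0$. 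One must then invoke Proposition \ref{delprop}: the function $\operatorname{val}_E(\Delta(\_,1))$, viewed appropriately on $P\backslash\Omega_0\cong G$, is proper, so it is bounded on any subset of $G$ whose image in $P\backslash G^\Box$ is contained in a compact subset of the open orbit — and a closed subset of $P\backslash G^\Box$ lying inside $\Omega_0$ need not itself be compact, so the argument instead should exploit that $f$, being smooth, is supported (mod $P$) on finitely many $K$-cosets together with the explicit cocycle formula for $f(g,1)$ above, reducing everything to the finitely-many-values property on $K$ and Proposition \ref{delprop}. I would phrase the final argument so that it leans on statements (1) and (2) verbatim, since those already package exactly the needed uniformity, and the only new input is that $\operatorname{val}_E(\Delta(g,1))$ does not blow up on $\operatorname{supp}(f)$ because $f\in I(X,\omega)^{(0)}$ vanishes near the other orbits.
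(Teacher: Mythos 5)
Your proof is correct and is in substance the paper's argument: the paper's entire proof consists of the assertion that $f|_{G\times 1}$ has compact support (so that $Z_N(X,\varphi,f)$ stabilises for large $N$), and your claim that $\operatorname{val}_E(\Delta(g,1))$ is bounded on $\{g:(g,1)\in\operatorname{supp}(f)\}$ is equivalent to that assertion via Proposition \ref{delprop}; whether one then concludes by bounding the $X$-degrees of $f(g,1)$ or by noting $Z_N=Z_M$ for $N\geq M$ is immaterial. One correction to your discussion of the key claim: the statement that ``a closed subset of $P\backslash G^\Box$ lying inside $\Omega_0$ need not itself be compact'' is false, since $G^\Box=PK$ gives $P\backslash G^\Box\cong(P\cap K)\backslash K$ compact; thus the image of $\operatorname{supp}(f)$ in $P\backslash G^\Box$ is closed in a compact space, hence compact, and sits inside the open orbit $P\backslash\Omega_0\cong G$ (Lemma \ref{homeo}), which is exactly the cleanest route you abandoned --- your fallback via finitely many $K_0$-cosets also works but is unnecessary. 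Note also that boundedness of $\operatorname{val}_E(\Delta(\cdot,1))$ on a compact subset of $G$ needs only its local constancy, not properness; properness is the converse implication (it is what makes compactness of the support equivalent to boundedness of the valuation).
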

\begin{proof}
	
	One can show that an element $f$ of $I(X,\omega)^{(0)}$ restricted to $G\times 1$ has compact support.
	Hence there is an $M$ such that $\operatorname{supp}(f|_{G\times 1})\subseteq\operatorname{supp}(\alpha_M)$ which implies $Z_N(X,\varphi,f)=Z_M(X,\varphi,f)$ for $N\geq M$ and thus proves the result.
\end{proof}
By using Lemma \ref{fund} we will inductively show that the doubling zeta integrals of elements in $M(\pi)\otimes I(X,\omega)^{(j)}$ lie in $S^{-1}A[X,X^{-1}]$ for $1\leq j\leq r$. For $1\leq j\leq r$ let $Q_j(X,\omega)$ be the $A[G\times G]$-module $I(X,\omega)^{(j)}/I(X,\omega)^{(j-1)}$.

\begin{lemma}
	We have an isomorphism $Q_j(X,\omega)\cong\operatorname{c-Ind}^{G\times G}_{\operatorname{Stab}(\Omega_j)}(\xi_X^{(j)})$ for $1\leq j\leq r$. Here $$\operatorname{Stab}(\Omega_j)=\{x\in (G\times G)\mid P\delta_jx=P\delta_j\}=(G\times G)\cap\delta_j^{-1}P\delta_j$$ and the map $\xi_X^{(j)}\colon\operatorname{Stab}(\Omega_j)\to A[X,X^{-1}]^\times$ is given by
	$$\xi_X^{(j)}(x)=\delta_P(\delta_jx\delta_j^{-1})^{1/2}\omega_X(\Delta(\delta_jx\delta_j^{-1})).$$
	
\end{lemma}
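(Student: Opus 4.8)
The plan is to realise $Q_j(X,\omega)$ as the compactly supported sections of the $(G\times G)$-equivariant sheaf underlying $I(X,\omega)$, restricted to the locally closed orbit $P\backslash\Omega_j$, and then to read off the character from the stabiliser. Write $Y=P\backslash G^\Box$, a compact $\ell$-space, and let $\mathcal F$ be the equivariant sheaf on $Y$ attached to the transformation rule $f(sx)=\delta_P(s)^{1/2}\omega_X(\Delta(s))f(x)$, so that $I(X,\omega)=\Gamma(Y,\mathcal F)=\Gamma_c(Y,\mathcal F)$ since $Y$ is compact. As $\overline{\Omega_j}=\bigcup_{l\ge j}\Omega_l$ is closed, the images $W_j$ of $\bigcup_{k\le j}\Omega_k$ in $Y$ form an increasing chain of $(G\times G)$-stable open subsets, and by construction $I(X,\omega)^{(j)}$ is precisely the space of sections supported in $W_j$; since such a support is closed in the compact space $Y$, hence compact, this identifies $I(X,\omega)^{(j)}$ with $\Gamma_c(W_j,\mathcal F|_{W_j})$.

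Next I would invoke the standard localisation sequence for compactly supported sections of a sheaf on an $\ell$-space. Since $P\backslash\Omega_j=W_j\setminus W_{j-1}$ is closed in the open set $W_j$, restriction of sections fits into a short exact sequence of smooth $(G\times G)$-representations over $A[X,X^{-1}]$,
$$0\longrightarrow\Gamma_c(W_{j-1},\mathcal F|_{W_{j-1}})\longrightarrow\Gamma_c(W_j,\mathcal F|_{W_j})\longrightarrow\Gamma_c(P\backslash\Omega_j,\mathcal F|_{P\backslash\Omega_j})\longrightarrow0,$$
whose first two terms are $I(X,\omega)^{(j-1)}$ and $I(X,\omega)^{(j)}$. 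Concretely, the quotient map sends the class of $f\in I(X,\omega)^{(j)}$ to its restriction $f|_{\Omega_j}$; one checks directly that its kernel is exactly the sections supported in $W_{j-1}$, namely $I(X,\omega)^{(j-1)}$. The only points that need care are the surjectivity onto, and the fact that the quotient consists of, \emph{compactly} supported sections over the orbit; both follow from $\mathcal F$ being locally constant, from $P\backslash\Omega_j$ being closed in $W_j$, and from the fact that smooth sections of a locally constant sheaf on the compact space $Y$ have clopen support. All of this works over the noetherian ring $A[X,X^{-1}]$ in the framework of Vigneras. This step already gives $Q_j(X,\omega)\cong\Gamma_c(P\backslash\Omega_j,\mathcal F|_{P\backslash\Omega_j})$.

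It remains to identify this last space with the asserted compact induction. By Lemma \ref{homeo} the orbit map $x\mapsto P\delta_j x$ is a $(G\times G)$-homeomorphism $\operatorname{Stab}(\Omega_j)\backslash(G\times G)\isomto P\backslash\Omega_j$. For $x\in\operatorname{Stab}(\Omega_j)=(G\times G)\cap\delta_j^{-1}P\delta_j$ we have $\delta_j x=(\delta_j x\delta_j^{-1})\delta_j$ with $\delta_j x\delta_j^{-1}\in P$, so any $f\in I(X,\omega)$ satisfies
$$f(\delta_j x)=\delta_P(\delta_j x\delta_j^{-1})^{1/2}\,\omega_X\!\big(\Delta(\delta_j x\delta_j^{-1})\big)\,f(\delta_j)=\xi_X^{(j)}(x)\,f(\delta_j).$$
Hence the stalk of $\mathcal F|_{P\backslash\Omega_j}$ at the base point $P\delta_j$ is $A[X,X^{-1}]$ with $\operatorname{Stab}(\Omega_j)$ acting through $\xi_X^{(j)}$, so $\mathcal F|_{P\backslash\Omega_j}$ is the equivariant sheaf on $\operatorname{Stab}(\Omega_j)\backslash(G\times G)$ attached to $\xi_X^{(j)}$, and its compactly supported sections are by definition $\operatorname{c-Ind}_{\operatorname{Stab}(\Omega_j)}^{G\times G}(\xi_X^{(j)})$. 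Tracing through, the resulting isomorphism is $\bar f\mapsto\big(x\mapsto f(\delta_j x)\big)$; it is $A[X,X^{-1}]$-linear, intertwines the right $(G\times G)$-actions, is well defined modulo $I(X,\omega)^{(j-1)}$ because $f|_{\Omega_j}=0$ for $f\in I(X,\omega)^{(j-1)}$, and is bijective by the exact sequence above.

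Since the geometric input — the orbit decomposition and Lemma \ref{homeo} — is already in hand, the lemma is essentially formal; the only genuine care is the compact-support bookkeeping in the second step, i.e.\ making sure the successive quotient is $\operatorname{c-Ind}$ rather than full $\operatorname{Ind}$, which is exactly where one uses that $\Omega_j$ is closed in $\bigcup_{k\le j}\Omega_k$ together with the local constancy of $\mathcal F$. The character $\xi_X^{(j)}$ itself drops out of the one-line computation above.
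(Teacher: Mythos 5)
Your proof is correct and takes essentially the same route as the paper: the paper cites Casselman's Lemma 6.1.1 for the short exact sequence $0\to I(X,\omega)^{(j-1)}\to I(X,\omega)^{(j)}\to J\to 0$ and then applies Lemma \ref{homeo} to identify $J$ with the compact induction; you have simply unwound the content of Casselman's lemma as the localisation sequence for compactly supported sections of the equivariant sheaf on $P\backslash G^\Box$, and done the one-line stabiliser computation to extract $\xi_X^{(j)}$. (As a minor note in your favour, the paper's description of $J$ omits the $\delta_P^{1/2}$ factor in the transformation rule, which your write-up correctly retains.)
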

\begin{proof}
	Lemma 6.1.1 of \cite{casselman1995introduction} yields a short exact sequence
	$$0\to I(X,\omega)^{(r-1)}\to I(X,\omega)^{(r)}\to J\to 0$$
	where $J$ is the set of locally constant functions $f\colon \Omega_j\to A[X,X^{-1}]$ such that $f(sx)=\omega_X(\Delta(s))f(x)$ for $s\in P, x\in \Omega_j$ and which are compactly supported modulo $P$. Lemma \ref{homeo} lets us identify $J$ with $\operatorname{c-Ind}^{G\times G}_{\operatorname{Stab}(\Omega_j)}(\xi_X^{(j)})$.
\end{proof}
We give a more detailed description of $\operatorname{Stab}(\Omega_j)$. Recall that $P\delta_j$ corresponds to the maximal isotropic subspace 
$$\{(u_1,u_2)\in U_j^\perp\times U_j^\perp\mid u_1-u_2\in U_j\}$$
of $(W^\Box,h^\Box)$ where $U_j$ is a totally isotropic subspace of $(W,h)$ of rank $j$. For $1\leq j\leq r$ let $P_j=M_jN_j$ be the parabolic subgroup of $G$ that stabilizes $U_j$. Note that $M_j$ is isomorphic to the product $\GL_D(U_j)\times\operatorname{Isom}(U_j^\perp/U_j,h)$.
\begin{proposition}\label{stabprop}
	The stabilisator $\operatorname{Stab}(\Omega_j)$ is contained in $P_j\times P_j$ and $\operatorname{Stab}(\Omega_j)\supseteq N_j\times N_j$. Moreover, $\operatorname{Stab}(\Omega_j)$ contains the $\GL_D(U_j)\times\GL_D(U_j)$ factor of $M_j\times M_j$.
\end{proposition}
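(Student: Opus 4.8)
The plan is to identify $\operatorname{Stab}(\Omega_j)$ with the stabilizer of the subspace $\hat{U}_j$ in $W^\Box$ and then check the three claims by elementary linear algebra. Under the identification of $P\backslash G^\Box$ with the set of maximal isotropic subspaces of $(W^\Box,h^\Box)$ the coset $Pg$ corresponds to $W^\bigtriangleup g$, so that $P\delta_j$ corresponds to $W^\bigtriangleup\delta_j=\hat{U}_j$; hence for $x\in G\times G$ the condition $P\delta_jx=P\delta_j$ is equivalent to $\hat{U}_jx=\hat{U}_j$, and
$$\operatorname{Stab}(\Omega_j)=\{(g_1,g_2)\in G\times G\mid \hat{U}_j(g_1,g_2)=\hat{U}_j\},$$
where $G\times G$ acts on $W^\Box$ via $(w_1,w_2)(g_1,g_2)=(w_1g_1,w_2g_2)$. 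I will use throughout the explicit form $\hat{U}_j=\{(u_1,u_2)\in U_j^\perp\times U_j^\perp\mid u_1-u_2\in U_j\}$ together with the elementary observations that $U_j\subseteq U_j^\perp$ (as $U_j$ is totally isotropic) and that an isometry stabilizing $U_j$ also stabilizes $U_j^\perp$.

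To prove $\operatorname{Stab}(\Omega_j)\subseteq P_j\times P_j$ I would test on the pairs $(u,0)$ with $u\in U_j$, which lie in $\hat{U}_j$: if $(g_1,g_2)$ preserves $\hat{U}_j$ then $(ug_1,0)\in\hat{U}_j$, and since the difference of the two components must lie in $U_j$ this forces $ug_1\in U_j$; as $g_1$ is invertible we get $U_jg_1=U_j$, i.e. $g_1\in P_j$, and symmetrically $g_2\in P_j$ by testing on $(0,u)$. For the two remaining inclusions the key point is: if $x_1,x_2\in G$ each stabilize $U_j$ and act trivially on $U_j^\perp/U_j$ (equivalently $ux_i-u\in U_j$ for all $u\in U_j^\perp$), then $(x_1,x_2)$ preserves $\hat{U}_j$. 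Indeed, for $(u_1,u_2)\in\hat{U}_j$ one has $u_ix_i\in U_j^\perp$ and
$$u_1x_1-u_2x_2=(u_1-u_2)+(u_1x_1-u_1)-(u_2x_2-u_2)\in U_j,$$
so $(u_1x_1,u_2x_2)\in\hat{U}_j$. Elements of $N_j$ fix $U_j$ pointwise and act trivially on $U_j^\perp/U_j$ (both being graded pieces of the flag $0\subseteq U_j\subseteq U_j^\perp\subseteq W$ on which the unipotent radical acts trivially), and elements of the $\GL_D(U_j)$-factor of $M_j$ act trivially on $U_j^\perp/U_j$ by the direct-product structure $M_j\cong\GL_D(U_j)\times\operatorname{Isom}(U_j^\perp/U_j,h)$; hence $N_j\times N_j\subseteq\operatorname{Stab}(\Omega_j)$ and $\GL_D(U_j)\times\GL_D(U_j)\subseteq\operatorname{Stab}(\Omega_j)$.

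All of the above is routine once the right objects are in place. I expect the only mild obstacle to be bookkeeping: correctly setting up the identification of $P\backslash G^\Box$ with maximal isotropic subspaces (so that $\operatorname{Stab}(\Omega_j)$ genuinely is the stabilizer of $\hat{U}_j$), keeping track of the left-$D$-module/right-operator conventions, and recalling the standard structure theory of the maximal parabolic $P_j=M_jN_j$ of the isometry group --- namely that $N_j$ acts trivially on the associated graded of $0\subseteq U_j\subseteq U_j^\perp\subseteq W$ and that the two factors of $M_j$ act on $U_j$ and on $U_j^\perp/U_j$ independently. Since these facts hold uniformly in cases (I1)--(I3), no case-by-case argument is needed and there is no conceptual difficulty.
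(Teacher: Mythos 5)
Your proof is correct, and it supplies the explicit linear-algebra argument that the paper outsources to Piatetski-Shapiro--Rallis (the paper's proof of this proposition is a one-line citation to their Proposition 2.1 plus ``the second part follows in a similar way''). Your structure is clean: identify $\operatorname{Stab}(\Omega_j)$ with the $G\times G$-stabilizer of $\hat{U}_j$ (which matches the paper's $\operatorname{Stab}(\Omega_j)=(G\times G)\cap\delta_j^{-1}P\delta_j$, since $\delta_j^{-1}P\delta_j$ is exactly the stabilizer of $W^\bigtriangleup\delta_j=\hat{U}_j$ in $G^\Box$); get the containment in $P_j\times P_j$ by testing on $(u,0)$ and $(0,u)$ with $u\in U_j\subseteq U_j^\perp$; and handle the two reverse inclusions at once via the observation that any $(x_1,x_2)$ with each $x_i$ stabilizing $U_j$ and acting trivially on $U_j^\perp/U_j$ preserves $\hat{U}_j$, which covers both $N_j\times N_j$ and the $\GL_D(U_j)\times\GL_D(U_j)$ factor by definition of the Levi decomposition. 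The only place where I'd add a word is the step ``$U_jg_1\subseteq U_j$, hence $U_jg_1=U_j$'': this uses that $g_1$ is a $D$-linear automorphism so $U_jg_1$ and $U_j$ have the same finite rank over the division ring $D$; equivalently one can apply the same test to $(g_1^{-1},g_2^{-1})$, which also lies in the stabilizer. Nothing else needs changing.
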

\begin{proof}
The first part is Proposition 2.1 in \cite{piatetski1987functions}. The second part follows in a similar way.
\end{proof}

\subsection{Proof of Theorem \ref{theorat}}
Suppose we have proved that there is a Laurent polynomial $Q\in A[X,X^{-1}]$ such that $Q\cdot Z(X,\varphi,f)\in A[X,X^{-1}]$ for all $f\in I(X,\omega)^{(j-1)}$ and $\varphi\in M(\pi)$. We will proceed inductively and use Lemma \ref{fund} to prove that there is a Laurent Polynomial $Q'$ such that $Q'\cdot Z(X,\_)$ is a Laurent polynomial on $M(\pi)\otimes_AI(X,\omega)^{(j)}$. Proposition \ref{levelzero} yields this for $M(\pi)\otimes_A I(X,\omega)^{(0)}$.\\ 
Let $M(\pi)^\kappa$ be the $(G\times G)$-representation $M(\pi)\otimes_A\kappa^{-1}$, where $\kappa$ was defined in Proposition \ref{gequi}. We consider $M(\pi)^\kappa$ as a right $A[G\times G]$-module by setting $\varphi(g_1,g_2)\coloneqq(g_1^{-1},g_2^{-1})\varphi$.\\
Let $S_j,T_j$ be the two submodules of $M(\pi)^\kappa\otimes_AI(X,\omega)^{(j)}$ where $T_j$ is generated by the set $$\{\varphi g\otimes f-\varphi\otimes gf\mid\varphi\in M(\pi)^\kappa,f\in I(X,\omega)^{(j)},g\in G\times G\}$$ and $S_j$ is generated by the elements $\varphi\otimes f$ where $\varphi\in M(\pi)^\kappa$ and $f\in I(X,\omega)^{(j-1)}$. Then by assumption and Proposition \ref{gequi} there is a Laurent polynomial $Q$, such that $Q$ times the doubling zeta integral of elements in $S_j+T_j$ is a Laurent polynomial. Hence to use Lemma \ref{fund} we need to show that there exists a nonzero Laurent polynomial in $S$ that annihilates the $A[X,X^{-1}]$-module
$$(M(\pi)^\kappa\otimes_AI(X,\omega)^{(j)})/(S_j+T_j).$$
\begin{lemma}\label{lem:calcmod}
	We have an $A[X,X^{-1}]$-module isomorphism
	$$(M(\pi)^\kappa\otimes_AI(X,\omega)^{(j)})/(S_j+T_j)\cong M(\pi)^\kappa_{N_j\times N_j}\otimes_{A[M_j\times M_j]}C_j(X,\omega),$$
	where $C_j(X,\omega)\coloneqq\cInd_{\operatorname{Stab}(\Omega_j)}^{P_j\times P_j}(\xi_X^{(j)})\otimes\delta_{P_j\times P_j}^{-1}$.
\end{lemma}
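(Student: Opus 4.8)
The plan is to unwind the quotient $(M(\pi)^\kappa\otimes_A I(X,\omega)^{(j)})/(S_j+T_j)$ in two stages, using the identification of $I(X,\omega)^{(j)}/I(X,\omega)^{(j-1)}$ with $\cInd^{G\times G}_{\operatorname{Stab}(\Omega_j)}(\xi_X^{(j)})$ together with Lemma \ref{tensprod}. First I would observe that dividing by $S_j$ replaces $I(X,\omega)^{(j)}$ by its quotient $Q_j(X,\omega)\cong\cInd^{G\times G}_{\operatorname{Stab}(\Omega_j)}(\xi_X^{(j)})$, so that
$$(M(\pi)^\kappa\otimes_A I(X,\omega)^{(j)})/(S_j+T_j)\;\cong\;M(\pi)^\kappa\otimes_{A[G\times G]}\cInd^{G\times G}_{\operatorname{Stab}(\Omega_j)}(\xi_X^{(j)}),$$
since dividing further by the image of $T_j$ is exactly the passage from $\otimes_A$ (with diagonal $G\times G$-action) to $\otimes_{A[G\times G]}$, and by Lemma \ref{tensprod} this is the same as $\otimes_{\mathcal H(G\times G)}$. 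The second stage is a Frobenius-reciprocity / transitivity-of-induction computation: I want to rewrite $\cInd^{G\times G}_{\operatorname{Stab}(\Omega_j)}(\xi_X^{(j)})$ by first inducing from $\operatorname{Stab}(\Omega_j)$ only up to $P_j\times P_j$ and then up to $G\times G$, i.e. as $\cInd^{G\times G}_{P_j\times P_j}\big(\cInd^{P_j\times P_j}_{\operatorname{Stab}(\Omega_j)}(\xi_X^{(j)})\big)$, which is legitimate because $\operatorname{Stab}(\Omega_j)\subseteq P_j\times P_j$ by Proposition \ref{stabprop}. Then I would apply the Hecke-module form of induction from Proposition \ref{comphecke} (whose hypotheses hold for the parabolic $P_j\times P_j$, as recorded right after that proposition) to express $\cInd^{G\times G}_{P_j\times P_j}(-)$ as $\mathcal H(G\times G)\otimes_{\mathcal H(P_j\times P_j)}(-\otimes\delta_{P_j\times P_j}^{-1})$, absorbing the modulus twist; here is where the $\delta_{P_j\times P_j}^{-1}$ in the definition of $C_j(X,\omega)$ comes from.

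Combining these, and using the associativity of tensor products over Hecke algebras from the second Proposition in Section \ref{sectionnotpre} together with Lemma \ref{tensprod}, the module becomes
$$M(\pi)^\kappa\otimes_{\mathcal H(G\times G)}\Big(\mathcal H(G\times G)\otimes_{\mathcal H(P_j\times P_j)}C_j(X,\omega)\Big)\;\cong\;M(\pi)^\kappa\otimes_{\mathcal H(P_j\times P_j)}C_j(X,\omega)\;\cong\;M(\pi)^\kappa\otimes_{A[P_j\times P_j]}C_j(X,\omega).$$
Finally I would push the computation down from $P_j\times P_j$ to its Levi $M_j\times M_j$: since $C_j(X,\omega)$ is an induction from a subgroup of $P_j\times P_j$ containing $N_j\times N_j$ (Proposition \ref{stabprop}), and $N_j\times N_j$ acts trivially in the relevant sense, tensoring a $P_j\times P_j$-module against $M(\pi)^\kappa$ over $A[P_j\times P_j]$ factors through the $N_j\times N_j$-coinvariants of $M(\pi)^\kappa$, giving $M(\pi)^\kappa_{N_j\times N_j}\otimes_{A[M_j\times M_j]}C_j(X,\omega)$. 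Throughout, all identifications are $A[X,X^{-1}]$-linear because $X$ acts only through the coefficient ring $A[X,X^{-1}]$ of $\xi_X^{(j)}$ and commutes with everything in sight.

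The step I expect to be the main obstacle is the bookkeeping of modulus characters in the transitivity-of-induction argument: both the normalization constants hidden in $i_{P}^{G^\Box}$ (already folded into the definition of $\xi_X^{(j)}$ via the $\delta_P^{1/2}$) and the $\delta_H$-twist appearing in Proposition \ref{comphecke} must be tracked so that exactly a single factor of $\delta_{P_j\times P_j}^{-1}$ survives in $C_j(X,\omega)$ and no stray factor of $\delta_{\operatorname{Stab}(\Omega_j)}$ or $\delta_{M_j}$ remains. I would handle this by doing the $c$-$\operatorname{Ind}$ transitivity with all modulus characters written out explicitly and checking that the ones attached to $\operatorname{Stab}(\Omega_j)$ and to $M_j\times M_j$ cancel against the $\delta_H$ in Proposition \ref{comphecke} and against the twist implicit in passing to $N_j\times N_j$-coinvariants, leaving only $\delta_{P_j\times P_j}^{-1}$. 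The remaining points — that the hypotheses of Proposition \ref{comphecke} really apply to $P_j\times P_j$ (noted in the excerpt), and that all maps are $A[X,X^{-1}]$-linear — are routine.
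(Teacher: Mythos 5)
Your proof proposal is correct and follows essentially the same route as the paper: quotient by $S_j$ then $T_j$ to reach $M(\pi)^\kappa\otimes_{A[G\times G]}Q_j(X,\omega)$, pass to Hecke-module tensor products via Lemma \ref{tensprod}, use transitivity of (compact) induction through $P_j\times P_j$ and Proposition \ref{comphecke} to introduce the $\delta_{P_j\times P_j}^{-1}$-twisted module $C_j(X,\omega)$, and finally descend to $M_j\times M_j$ via the triviality of the $N_j\times N_j$-action on $C_j(X,\omega)$. The only point worth tightening is the last step, where the paper verifies explicitly (from Proposition \ref{stabprop} and the triviality of $\xi_X^{(j)}$ and $\delta_{P_j\times P_j}$ on $N_j\times N_j$) that $N_j\times N_j$ acts trivially on $C_j(X,\omega)$, which is what lets the tensor product over $A[P_j\times P_j]$ factor through $N_j\times N_j$-coinvariants.
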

\begin{proof}
	
	We have $$(M(\pi)^\kappa\otimes_AI(X,\omega)^{(j)})/(S_j+T_j)\cong ((M(\pi)^\kappa\otimes_AI(X,\omega)^{(j)})/S_j)/((S_j+T_j)/S_j).$$
	Note that
	$$(M(\pi)^\kappa\otimes_AI(X,\omega)^{(j)})/S_j\cong M(\pi)\otimes_AQ_j(X,\omega)$$
	and the above isomorphism maps $(S_j+T_j)/S_j$ to the submodule generated by the elements 
	$$\varphi\otimes g(f+I(X,\omega)^{(j-1)})-\varphi g\otimes (f+I(X,\omega)^{(j-1)}),$$
	where $\varphi\in M(\pi)^\kappa,f\in I(X,\omega)$ and $g\in G\times G$.
	This proves that
	$$(M(\pi)^\kappa\otimes_AI(X,\omega)^{(j)})/(S_j+T_j)\cong M(\pi)^\kappa\otimes_{A[G\times G]}Q_j(X,\omega).$$
	This isomorphism respects the $A[X,X^{-1}]$-structures and it is easy to check that the same holds for the following morphisms. By Lemma \ref{tensprod} we have
	$$M(\pi)^\kappa\otimes_{A[G\times G]}Q_j(X,\omega)\cong M(\pi)^\kappa\otimes_{\mathcal H(G\times G)}Q_j(X,\omega)$$
	and since compact induction is transitive we obtain
	$$M(\pi)^\kappa\otimes_{\mathcal H(G\times G)}Q_j(X,\omega)\cong M(\pi)^\kappa\otimes_{\mathcal H(G\times G)}\Ind_{P_j\times P_j}^{G\times G}(\operatorname{c-Ind}_{\operatorname{Stab(\Omega_j)}}^{P_j\times P_j}(\xi_X^{(j)})).$$
	Theorem \ref{comphecke} now shows that
	$$\Ind_{P_j\times P_j}^{G\times G}(\operatorname{c-Ind}_{\operatorname{Stab(\Omega_j)}}^{P_j\times P_j}(\xi_X^{(j)}))\cong \mathcal H(G\times G)\otimes_{\mathcal H(P_j\times P_j)}\left((\operatorname{c-Ind}_{\operatorname{Stab(\Omega_j)}}^{P_j\times P_j}(\xi_X^{(j)}))\otimes\delta_{P_j\times P_j}^{-1}\right).$$
	Define $C_j(X,\omega)$ to be $\operatorname{c-Ind}_{\operatorname{Stab(\Omega_j)}}^{P_j\times P_j}(\xi_X^{(j)})\otimes\delta_{P_j\times P_j}^{-1}$. We obtain that 
	\begin{align*}
	M(\pi)^\kappa\otimes_{A[G\times G]}Q_j(X,\omega)&\cong M(\pi)^\kappa\otimes_{\mathcal H(G\times G)}(\mathcal H(G\times G)\otimes_{\mathcal H(P_j\times P_j)}C_j(X,\omega))\\
	&\cong (M(\pi)^\kappa\otimes_{\mathcal H(G\times G)}\mathcal H(G\times G))\otimes_{\mathcal H(P_j\times P_j)}C_j(X,\omega)\\
	&\cong M(\pi)^\kappa\otimes_{\mathcal H(P_j\times P_j)}C_j(X,\omega)\\
	&\cong M(\pi)^\kappa\otimes_{A[P_j\times P_j]}C_j(X,\omega).
	\end{align*}
	Note that $\xi_X^{(j)}$ and $\delta_{P_j\times P_j}^{-1}$ are trivial on $N_j\times N_j$. Hence for $x=m'n'\in P_j\times P_j,n\in N_j\times N_j$ and $f\in C_j(X,\omega)$, Proposition \ref{stabprop} yields
	$$n\cdot f(x)=f(xn)=f(mnn'm^{-1}m)=f(m)=f(mn'm^{-1}m)=f(x).$$
	We have that $M(\pi)^\kappa\otimes_{A[P_j\times P_j]}C_j(X,\omega)$ is isomorphic to 
	$$\left(M(\pi)^\kappa\otimes_{A}C_j(X,\omega)\right)/(U_j+V_j),$$
	where $U_j$ is the $A$-submodule generated by elements $\varphi\otimes f-m\varphi\otimes mf$ where $\varphi\in M(\pi)^\kappa,f\in C_j(X,\omega)$ and $m\in M_j\times M_j$. The $A$-submodule $V_j$ is generated by elements $\varphi\otimes f-n\varphi\otimes nf=(\varphi-n\varphi)\otimes f$ where $\varphi\in M(\pi)^\kappa,f\in C_j(X,\omega)$ and $n\in N_j\times N_j$. We have 
	$$\left(M(\pi)^\kappa\otimes_AC_j(X,\omega)\right)/V_j\cong M(\pi)^\kappa_{N_j\times N_j}\otimes_A C_j(X,\omega)$$
	and $(U_j+V_j)/V_j$ is isomorphic to the submodule of the above generated by 
	$$\overline{\varphi}\otimes f-m\overline{\varphi}\otimes mf$$ 
	where $\overline{\varphi}\in M(\pi)^\kappa_{N_j\times N_j},f\in C_j(X,\omega)$ and $m\in M_j\times M_j$. Hence we conclude that 
	$$M(\pi)^\kappa\otimes_{A[P_j\times P_j]}C_j(X,\omega)\cong M(\pi)^\kappa_{N_j\times N_j}\otimes_{A[M_j\times M_j]}C_j(X,\omega).$$
	Overall we obtain an isomorphism of $A[X,X^{-1}]$-modules
	$$M(\pi)^\kappa\otimes_{A[G\times G]}Q_j(X,\omega)\cong M(\pi)^\kappa_{N_j\times N_j}\otimes_{A[M_j\times M_j]}C_j(X,\omega),$$
	which is what we wanted to prove.
\end{proof}
All that is left now to prove Theorem \ref{theorat} is to find a Laurent polynomial in $S$ that annihilates 
$$M(\pi)^\kappa_{N_j\times N_j}\otimes_{A[M_j\times M_j]}C_j(X,\omega).$$
The next proposition will provide a sufficient condition.
\begin{proposition}\label{sufrat}
	Suppose that $\operatorname{End}_{M_j}(V_{N_j})$ is a finitely generated $A$-module. Then we can find a Laurent polynomial in $S$ which annihilates the $A[X,X^{-1}]$-module $$M(\pi)^\kappa_{N_j\times N_j}\otimes_{A[M_j\times M_j]}C_j(X,\omega).$$ 
\end{proposition}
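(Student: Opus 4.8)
The plan is to manufacture the required annihilator from a single central element acting on the module in two incompatible ways.

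\emph{Step 1 (the stabiliser).} Write $L_j=\GL_D(U_j)$ and $G_j=\operatorname{Isom}(U_j^\perp/U_j,h)$, so $M_j\cong L_j\times G_j$. First I would spell out which $(g_1,g_2)\in G\times G$ stabilise the maximal isotropic subspace $\hat U_j$ — these are the pairs with $g_1,g_2\in P_j$ inducing the same automorphism of $U_j^\perp/U_j$ — and combine this with Proposition \ref{stabprop} to obtain
$$\operatorname{Stab}(\Omega_j)=(N_j\times N_j)\rtimes H_j,\qquad H_j:=(L_j\times L_j)\times G_j^{\bigtriangleup},$$
where $G_j^{\bigtriangleup}$ is the diagonal $G_j$ inside the $G_j\times G_j$ block of $M_j\times M_j$. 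Since $\xi_X^{(j)}$ and $\delta_{P_j\times P_j}^{-1}$ are trivial on $N_j\times N_j$ (as already used in the proof of Lemma \ref{lem:calcmod}), $C_j(X,\omega)$ is inflated from $M_j\times M_j$, where it equals $\cInd_{H_j}^{M_j\times M_j}\!\bigl(\xi_X^{(j)}\delta_{P_j\times P_j}^{-1}\bigr)$. Hence $\mathcal{M}_j:=M(\pi)^\kappa_{N_j\times N_j}\otimes_{A[M_j\times M_j]}C_j(X,\omega)$ is a space of twisted $H_j$-coinvariants of $M(\pi)^\kappa_{N_j\times N_j}$.

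\emph{Step 2 (a central element, acting twice).} The $X$-exponent of $\xi_X^{(j)}$ is the homomorphism $x\mapsto\operatorname{val}_E(\Delta(\delta_j x\delta_j^{-1}))$ on $\operatorname{Stab}(\Omega_j)$. It vanishes on $G_j^{\bigtriangleup}$ — reduced norms of isometries have valuation $0$ — while, by the computation underlying Proposition 2.1 of \cite{piatetski1987functions}, its restriction to the centre $Z(L_j)$ of the first $\GL$-factor is a nonzero multiple of $\operatorname{val}_E\circ\operatorname{Nrd}$. I would fix $z=((\zeta,1),(1,1))$ with $\zeta\in Z(L_j)$ and $c:=\operatorname{val}_E(\Delta(\delta_j z\delta_j^{-1}))\ne 0$, and, after possibly replacing $z$ by $z^{-1}$, assume $c>0$; note $z\in Z(M_j\times M_j)\cap H_j$. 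On the one hand, being central in $M_j\times M_j$ and contained in $H_j$, $z$ acts on $C_j(X,\omega)$ — hence on $\mathcal{M}_j$ — by the scalar $\xi_X^{(j)}(z)\,\delta_{P_j\times P_j}^{-1}(z)=\chi(z)X^{c}$ for a unit $\chi(z)\in A^\times$. On the other hand, $M(\pi)_{N_j\times N_j}$ is a quotient of $(V\otimes_A\widetilde V)_{N_j\times N_j}\cong V_{N_j}\otimes_A\widetilde V_{N_j}$, on which $z$ acts through the central element $(\zeta,1)\in Z(M_j)$ on the first factor $V_{N_j}$; so $z$ acts on $\mathcal{M}_j$ through the element $z$ of $\operatorname{End}_{M_j}(V_{N_j})$ (the $\kappa^{-1}$-twist only scales this by a unit of $A$).

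\emph{Step 3 (conclusion).} Here I would use the hypothesis: since $\operatorname{End}_{M_j}(V_{N_j})$ is module-finite over $A$, $z$ and $z^{-1}$ are integral over $A$; hence, as operators on $\mathcal{M}_j$, so are $\chi(z)X^{\pm c}$, and so are $X^{\pm c}$, and — $X$ being a root of a monic polynomial over $A[X^{c}]$, which is integral over $A$ — so are $X^{\pm 1}$. Thus $B:=A[X,X^{-1}]/\operatorname{Ann}_{A[X,X^{-1}]}(\mathcal{M}_j)$ is generated over $A$ by the integral elements $X,X^{-1}$, hence is module-finite over $A$. It then remains to invoke the elementary fact that for any commutative ring $A$ and any ideal $I\subseteq A[X,X^{-1}]$ with $A[X,X^{-1}]/I$ module-finite over $A$ one has $I\cap S\ne\emptyset$: integrality of $\bar X$ yields a monic $g\in I\cap A[X]$, integrality of $\bar X^{-1}$ yields — after clearing denominators in a monic relation for $\bar X^{-1}$ — an element $c_0\in I\cap A[X]$ with $c_0(0)\in A^\times$, and then $c_0+X^{N}g\in I$ has invertible constant and leading coefficients once $N>\deg c_0$, so lies in $S$. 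Applied to $B$, this produces a Laurent polynomial in $S$ annihilating $\mathcal{M}_j$.

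The hard part, I expect, is Step 1 and the two inputs it feeds into Step 2: fixing the exact semidirect-product structure of $\operatorname{Stab}(\Omega_j)$, identifying $M(\pi)_{N_j\times N_j}$ as a quotient of $V_{N_j}\otimes_A\widetilde V_{N_j}$ carrying the correct $M_j\times M_j$-action, and confirming that $\Delta$ is nontrivial on the centre of the $\GL$-factor after conjugation by $\delta_j$ — all concrete computations in the geometry of $P\backslash G^\Box$ and in the structure of $P$. Once these are in hand, Steps 2 and 3 are formal. (In the linear case and for non-quasi-split inner forms, $\Omega_j$ and its stabiliser would, as elsewhere, need separate handling.)
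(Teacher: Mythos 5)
Your proof is correct and follows essentially the same route as the paper's: use a central element $z$ of $\GL_D(U_j)$ (the paper takes $z=(\varpi_E\id,\id)\in M_j$, then $z_1=(z,\id)$) which lies in $\operatorname{Stab}(\Omega_j)$, observe that it acts on $C_j(X,\omega)$ by a unit times $X^{rj}$ and on $M(\pi)^\kappa_{N_j\times N_j}$ through $\End_{M_j}(V_{N_j})$, and use the $G$-finiteness/admissibility hypothesis to get an integrality relation that converts into a Laurent polynomial in $S$. Two cosmetic differences are worth noting. First, your Step~1 spells out the full semidirect-product structure $\operatorname{Stab}(\Omega_j)=(N_j\times N_j)\rtimes\bigl((L_j\times L_j)\times G_j^{\bigtriangleup}\bigr)$, which the paper does not need (Proposition~\ref{stabprop} gives just enough: $z_1\in\operatorname{Stab}(\Omega_j)$ and $N_j\times N_j\subseteq\operatorname{Stab}(\Omega_j)$); this is fine but is extra work. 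Second, your Step~3 is more abstract: you first show $A[X,X^{-1}]/\operatorname{Ann}(\mathcal M_j)$ is module-finite over $A$ and then invoke a general lemma producing an element of $I\cap S$. The paper reaches the same place more directly: from the monic relations for $z^{-1}$ and for $z$ one writes down one polynomial $P_1\in A[X]$ with unit leading coefficient and one $P_2\in A[X^{-1}]$ with unit trailing coefficient, both annihilating $\mathcal M_j$, and $P_1+P_2\in S$ does the job. Your addition trick $c_0+X^Ng$ inside the general lemma is exactly this step in disguise, so the arguments are equivalent. Minor remark: for $z_1=(z,\id)$ one actually has $\kappa(z_1)=1$, so the ``$\kappa^{-1}$ only scales by a unit'' caveat is even simpler than you suggest; and your clause ``$X$ being a root of a monic polynomial over $A[X^c]$'' is correct (via $T^c-X^c$) but could be stated more explicitly.
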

\begin{proof}
	Recall that $M_j\cong\GL_D(U_j)\times\operatorname{Isom}(U_j^\perp/U_j,h)$ and we will identify these two groups under this isomorphism. Let $z$ be the central element $(\varpi_E\id,\id)$ in $M_j$ where $\varpi_E$ is a uniformizer of the ring of integers of $E$. By Proposition $\ref{stabprop}$ both $z_1=(z,\id)$ and $z_2=(\id,z)$ are elements of $\operatorname{Stab}(\Omega_j)$.\\
	For $f\in C_j(X,\omega),p=mn\in P_j\times P_j$ and $i=1,2$ we have
	$$(z_i\cdot f)(p)=f(mz_i)=\delta_{P_j\times P_j}^{-1}(z_i)\xi_X^{(j)}(z_i)f(m)=\delta_{P_j\times P_j}^{-1}(z_i)\xi_X^{(j)}(z_i)f(p).$$
	A quick calculation gives that $\xi_X^{(j)}(z_1)=\delta_P(\delta_jz_1\delta_j^{-1})^{\frac{1}{2}}\omega(\varpi_E^{rj})X^{rj}$, where $r^2=\dim_E(D)$. Hence for $f\in C_j(X,\omega)$ and $\varphi\in M(\pi)$ we have that
	\begin{equation}\label{poly}\varphi\otimes\left(\delta_{P_j\times P_j}^{-1}(z_1)\delta_P(\delta_jz_1\delta_j^{-1})^{\frac{1}{2}}\omega(\varpi_E^{rj})X^{rj}f\right)=\varphi\otimes z_1f=\varphi\cdot z_1\otimes f
	\end{equation}
	in $M(\pi)^\kappa_{N_j\times N_j}\otimes_{A[M_j\times M_j]}C_j(X,\omega).$ Suppose that $\varphi(g)=\lambda(\pi(g)v)$ where $v\in V,\lambda\in\widetilde{V}$. Then we have $(\varphi\cdot z_1)(g)=\lambda(\pi(gz^{-1})v)$. Since $z^{-1}$ is in the center of $M_j$ it gives rise to an element of $\End_{M_j}(V_{N_j})$. If this algebra is a finitely generated $A$-module there is some nonzero monic polynomial $Q$ such that $Q(z^{-1})\in A[M_j]$ annihilates $V_{N_j}$. Then $Q(z_1)\in A[M_j\times M_j]$ annihilates $M(\pi)^\kappa_{N_j\times N_j}$. By Equation (\ref{poly}) this yields a polynomial in $A[X]$ with leading coefficient a unit that annihilates $M(\pi)^\kappa_{N_j\times N_j}\otimes_{A[M_j\times M_j]}C_j(X,\omega)$. By doing the same procedure for $z_1^{-1}$ and adding the resulting polynomial with the one obtained above we get an element in the multiplicative subset $S$ that annihilates $$M(\pi)^\kappa_{N_j\times N_j}\otimes_{A[M_j\times M_j]}C_j(X,\omega).$$
\end{proof}
By Lemma \ref{lem:calcmod} and Proposition \ref{sufrat} the rationality of the doubling zeta integral follows if $\operatorname{End}_{M_j}(V_{N_j})$ is a finitely generated $A$-module for $1\leq j\leq r$. 
\begin{proposition}
	Let $(\pi,V)$ be an admissible $G$-finite representation over a noetherian ring $A$. Then $\operatorname{End}_{A[G]}(V)$ is a finitely generated $A$-module.
\end{proposition}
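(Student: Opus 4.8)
The statement asserts that for an admissible $G$-finite representation $(\pi,V)$ over a noetherian ring $A$, the endomorphism ring $\operatorname{End}_{A[G]}(V)$ is a finitely generated $A$-module. The natural strategy is to exhibit $\operatorname{End}_{A[G]}(V)$ as an $A$-submodule of a manifestly finitely generated $A$-module and then invoke noetherianity of $A$. Since $V$ is $G$-finite, pick generators $v_1,\dots,v_m\in V$. An $A[G]$-endomorphism $T$ is determined by the tuple $(Tv_1,\dots,Tv_m)\in V^m$, so evaluation at generators gives an injection $\operatorname{End}_{A[G]}(V)\hookrightarrow V^m$. The issue is that $V$ itself need not be finitely generated as an $A$-module, so this alone does not finish the argument; we need to cut down to a finitely generated piece.

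**Key steps.** First I would fix an open compact subgroup $K$ of $G$ small enough that all the generators $v_1,\dots,v_m$ lie in $V^K$ (possible since each $v_i$ is smooth, and a finite intersection of open compact subgroups is open compact). Then for any $T\in\operatorname{End}_{A[G]}(V)$ and any $k\in K$ we have $Tv_i=T(\pi(k)v_i)=\pi(k)(Tv_i)$, so $Tv_i\in V^K$ for all $i$. Hence evaluation at generators actually lands in $(V^K)^m$:
\[
\operatorname{End}_{A[G]}(V)\hookrightarrow (V^K)^m,\qquad T\mapsto (Tv_1,\dots,Tv_m).
\]
This map is injective because $v_1,\dots,v_m$ generate $V$ over $A[G]$ and $T$ is $A[G]$-linear. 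By admissibility, $V^K$ is a finitely generated $A$-module, so $(V^K)^m$ is a finitely generated $A$-module. Since $A$ is noetherian, the submodule $\operatorname{End}_{A[G]}(V)\subseteq (V^K)^m$ is itself finitely generated as an $A$-module, which is the claim.

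**Main obstacle.** There is essentially no deep obstacle here; the only point requiring a moment's care is the reduction to $V^K$, i.e. observing that an $A[G]$-endomorphism preserves the $K$-fixed vectors and that the generators can be taken $K$-fixed for a single common $K$. Once that is in place, the result is immediate from admissibility plus noetherianity of $A$. (One should note this proposition is exactly what is needed to feed Proposition~\ref{sufrat}: applying it with $G$ replaced by the Levi $M_j$ and $V$ replaced by the Jacquet module $V_{N_j}$ — which is admissible and $M_j$-finite under the running hypotheses on $(\pi,V)$ — shows $\operatorname{End}_{M_j}(V_{N_j})$ is a finitely generated $A$-module, completing the proof of Theorem~\ref{theorat}.)
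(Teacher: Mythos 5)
Your proof is correct and takes essentially the same approach as the paper: fix generators $v_1,\dots,v_m$, choose $K$ fixing them, note $A[G]$-endomorphisms preserve $V^K$, and embed $\operatorname{End}_{A[G]}(V)$ into a finitely generated $A$-module to conclude by noetherianity. The only cosmetic difference is that the paper embeds into $\operatorname{End}_A(V^K)$ via restriction while you embed into $(V^K)^m$ via evaluation at the generators; both require the same two observations and both finish identically.
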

\begin{proof}
	By assumption there exist $v_1,\dotsc,v_n$ which generate $V$ as an $A[G]$-module. Let $K$ be a compact open subgroup of $G$ such that $v_i\in V^K$ for all $i=1,\dotsc,n$. Since an element of $\operatorname{End}_{A[G]}(V)$ is completely determined by its values on a generating set and it maps $V^K$ to itself, we have by restriction an embedding $\operatorname{End}_{A[G]}(V)\hookrightarrow\operatorname{End}_A(V^K)$. By admissibility, $V^K$ is finitely generated as an $A$-module and hence $\operatorname{End}_A(V^K)$ is a finitely generated $A$-module.
\end{proof}
\begin{proposition}
	Suppose that $(V,\pi)$ is finitely generated as an $A[G]$-module and let $P=MN$ be a parabolic subgroup of $G$. Then $(V_N,\pi_N)$ is finitely generated as an $M$-module.
\end{proposition}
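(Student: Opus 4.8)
The plan is to reduce the statement to the analogous classical fact over a field via a standard "spreading out" / generation-by-finitely-many-elements argument, using only that $A$ is noetherian and that parabolic induction behaves well. First I would pick generators $v_1,\dots,v_n$ of $V$ as an $A[G]$-module, and a compact open subgroup $K$ of $G$ such that each $v_i$ is fixed by $K$ and such that $K$ has an Iwahori-type factorization with respect to $P=MN$, i.e.\ $K = (K\cap \overline{N})(K\cap M)(K\cap N)$. The key elementary input is the following: the canonical projection $q\colon V \to V_N$ is $M$-equivariant after the unnormalized twist, and for a suitable choice of $K$ the images $q(v_1),\dots,q(v_n)$ generate $V_N$ over $A[M]$. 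This is where the bulk of the work lies, and it is the step I expect to be the main obstacle, so let me say more about it.

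The mechanism is the usual one from Casselman's notes (adapted to general coefficients, which is harmless since no division is needed): given $\bar w \in V_N$, lift it to $w\in V$, and choose a compact open $K'\subseteq K$ fixing $w$. Writing $w$ as an $A[G]$-combination of the $v_i$ and then averaging over $K'\cap N$ (which only requires that $\mu_{N}(K'\cap N)$ be invertible — again fine, as it is a power of $p$ and $A$ is a $\mathbb{Z}[1/p]$-algebra), one rewrites $\bar w$ modulo $V(N)$ as an $A[M]$-combination of the classes $q(g_j v_i)$ for finitely many $g_j$. The point is then that for $g$ ranging over a fixed compact set $C$ with $G = P C$... more precisely one uses that $G$ is covered by finitely many double cosets of the form $M(K\cap M)\,c\,K$ modulo the relevant subgroups, together with the Iwahori factorization, to absorb the $N$- and $\overline{N}$-parts into the averaging and the $K$-fixedness of the $v_i$, leaving only finitely many $M$-translates. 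Concretely, I would invoke the decomposition $G = \bigsqcup P w_\alpha K$ over the finite set of relative Weyl elements $w_\alpha$ (Bruhat), so that any $g$ is $p\, w_\alpha\, k$; the $k$ fixes $v_i$, and the unipotent radical part of $p$ dies after passing to $V_N$ and averaging, leaving the $M$-part times the finitely many $q(w_\alpha v_i)$ as generators.

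Once generation of $V_N$ by finitely many elements over $A[M]$ is established, there is nothing left to prove: $V_N$ with its $M$-action (normalized or not — the normalization is just tensoring with the character $\delta_P^{-1/2}$, which does not affect finite generation) is by construction a finitely generated $A[M]$-module, which is exactly the assertion. I would present the argument cleanly by first proving the averaging lemma — for $w\in V^{K'}$ and $N_0 = K'\cap N$, the element $\frac{1}{\mu_N(N_0)}\int_{N_0}\pi(n)w\,dn$ has the same image as $w$ in $V_N$ and is fixed by $N_0$ — and then combining it with the Bruhat decomposition as above. The only genuinely delicate point, and the one I would write out with care, is checking that the compact set of coset representatives can be taken finite modulo the subgroups that get averaged out; everything else is bookkeeping with the relations defining $V(N)$.
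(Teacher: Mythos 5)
Your proposal is correct, and its operative step is in fact identical to the paper's proof: the paper simply notes that $P\backslash G$ is compact, hence $P\backslash G/K$ is finite with representatives $g_1,\dotsc,g_l$, so the elements $\pi(g_j)v_i$ generate $V$ over $A[P]$ (each $g=pg_jk$ with $k$ fixing $v_i$), and since $N$ acts trivially on $V_N$ the images of these finitely many elements generate $V_N$ over $A[M]$. That is the whole proof, and it is exactly the argument you arrive at in the second half of your second paragraph. Where you diverge is in believing the statement requires the Casselman/Jacquet-lemma machinery — Iwahori factorization of $K$, lifting $\bar w$ to $w$, and averaging over $K'\cap N$ with $\mu_N(K'\cap N)$ invertible. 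None of that is needed for finite generation; it is the apparatus for proving \emph{admissibility} of the Jacquet module, which is a genuinely harder statement that the paper does not prove this way at all but instead imposes as a hypothesis in Theorem \ref{theorat} and discharges for quasi-split groups by citing Dat. Two small imprecisions worth noting: your opening claim that the $q(v_i)$ themselves generate $V_N$ over $A[M]$ is false in general (you need the finitely many translates $q(g_jv_i)$, as you later realize), and the decomposition you want is not the Bruhat decomposition $G=\bigsqcup Pw_\alpha K$ over relative Weyl elements but simply the finiteness of $P\backslash G/K$ for any compact open $K$, which follows from compactness of $P\backslash G$.
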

\begin{proof}
	Suppose that $v_1,\dotsc,v_k$ generate $V$ as an $A[G]$-module. Then there is a compact open subgroup $K$ of $G$ such that $v_i\in V^K$ for all $1\leq i\leq k$. Since $P\backslash G$ is compact $P\backslash G/K$ is finite and let $\{g_1,\dotsc,g_l\}$ be a set of coset representatives. Then the $\pi(g_j)v_i$ for $1\leq i\leq k,1\leq j\leq l$ generate $V$ as a $A[P]$-module. Since $N$ acts trivially on $V_N$ we see that $M$ generates $V_N$ as an $A[M]$-module.
\end{proof}
Thus the Jacquet functor for a parabolic subgroup $P=MN$ of $G$ sends $G$-finite representations to $M$-finite representations. Moreover, the following result by Dat allows us to remove the second condition of Theorem \ref{theorat} for quasi-split classical groups.
\begin{theorem}
	Let $G$ be a connected quasi-split classical group and $A$ a noetherian ring. Then the Jacquet functor preserves admissibility for all parabolic subgroups.
\end{theorem}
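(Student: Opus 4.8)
Since this is a theorem of Dat we only indicate the strategy: I would approach it by the arithmetic refinement of Casselman's classical argument that Jacquet modules of admissible complex representations are admissible. Fix a parabolic subgroup $P=MN$ of $G$, let $\overline P=M\overline N$ be its opposite, and let $(\pi,V)$ be an admissible $A[G]$-module; the plan is to show that $V_N^{K_M}$ is a finitely generated $A$-module for every compact open subgroup $K_M\subseteq M$. The point is that, because $A$ is a $\mathbb Z[1/p]$-algebra, every averaging operation over a pro-$p$ group is division by a unit of $A$, so the classical manipulations make sense verbatim over $A$ --- provided the one genuinely nontrivial input, Jacquet's Lemma, survives.

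So the first step is to establish, over $A$, the following form of \emph{Jacquet's Lemma}: for every compact open subgroup $J\subseteq G$ admitting an Iwahori factorization $J=(J\cap\overline N)(J\cap M)(J\cap N)$ relative to $(P,\overline P)$, the projection $p_N\colon V\to V_N$ restricts to a surjection $V^{J}\twoheadrightarrow V_N^{J\cap M}$. I would prove this following \cite{casselman1995introduction}: lift a class to $v\in V$; shrink $J$ in the $\overline N$- and $N$-directions so that $v$ is $J$-fixed; apply $\pi(z)^{t}$ for $t\gg 0$, where $z$ is a strictly contracting element of the centre of $M$; and then average successively over $J\cap N$ and $J\cap\overline N$, using that averaging over $\overline N$ does not change the image in $V_N$ once the vector is invariant under a subgroup of $N$ large enough relative to the $\overline N$-subgroup being averaged over --- a commutator estimate that does not involve the coefficients. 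Granting this, the theorem follows readily: given $K_M$, choose $J$ small enough that $J\cap M\subseteq K_M$ (possible since Iwahori-factorizable subgroups form a neighbourhood basis of $1$ in $G$, so the groups $J\cap M$ form one in $M$); then $V_N^{J\cap M}$ is a quotient of the $A$-module $V^{J}$, which is finitely generated by admissibility of $V$, hence finitely generated since $A$ is noetherian, and therefore so is its submodule $V_N^{K_M}$.

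The main obstacle is thus the uniform validity of Jacquet's Lemma over a general noetherian base, or equivalently of Bernstein's second adjunction relating $i_{\overline P}^G$ and $r_P^G$: from the isomorphism $(r_P^G V)^{K_M}\cong\Hom_G\!\big(i_{\overline P}^G\cInd_{K_M}^{M}\mathbf 1,\,V\big)$ provided by second adjunction, together with the (elementary) fact that parabolic induction sends $G$-finite representations to $G$-finite representations, one sees that $(r_P^G V)^{K_M}$ embeds into a finite power of $V^{K}$ for a suitable compact open $K\subseteq G$, hence is finitely generated. Over $\mathbb C$ both ingredients are classical, and over noetherian rings they are known for $\GL_n$; the hard part is pushing the second adjunction --- or the contracting-element bookkeeping in Jacquet's Lemma --- through over an arbitrary noetherian base, and it is here that explicit control of the parabolic subgroups and their unipotent radicals for quasi-split classical groups is used. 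This is the content of Dat's theorem; its extension to general inner forms is conjectural, which is why the hypothesis on Jacquet modules in Theorem~\ref{theorat} cannot presently be removed in that generality.
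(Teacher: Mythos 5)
The paper does not actually prove this statement — it simply cites Corollary 1.6 i) of Dat's \emph{Finitude} paper — and your proposal likewise defers to Dat while outlining his strategy, so the two treatments agree. Your sketch of the underlying mechanism (an arithmetic refinement of Jacquet's Lemma via Iwahori factorizations, or equivalently second adjunction combined with finite generation of parabolic inductions and noetherianity of $A$) is an accurate account of how Dat's result is obtained, and you correctly identify second adjointness as the genuinely hard input and the reason the hypothesis cannot yet be dropped for general inner forms.
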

\begin{proof}
This is Corollary 1.6 i) in \cite{dat2009finitude}.
\end{proof}
\begin{remark} Note that in our setting not all groups are the $F$-points of connected reductive groups. However, if $G$ is not connected, but the above statement is known for $F$-points of the neutral component $G^0$ of $G$, it also holds for $G$. Namely, if $P=MN$ is a parabolic subgroup of $G$, then $P^0=M^0N$, where $M^0=M\cap G^0$, is a parabolic subgroup of $G^0$. Hence parabolic restrictions $r^G_P(\pi,V)$ and $r^{G^0}_{P^0}(\pi|_{G^0},V)$ have the same underlying set of elements. Since $G^0$ is open in $G$ we have that $M^0$ is open in $M$. For any compact open subgroup $K$ of $M$ we can find an open compact subgroup $K^0$ of $M^0$ that is contained in $K$. The invariants $(V_N)^K$ are then contained in $(V_N)^{K^0}$ which is finitely generated. 
\end{remark}
\begin{remark}
In the linear case the situation is a bit different since the orbit structure of $P\backslash G^\Box/(G\times G)$ is slightly more complicated (see Section 4 of \cite{piatetski1987functions}). Namely, $P\backslash G^\Box$ can be identified with the submodules of $W^\Box$ whose rank as a $D$-module is $n$. Let $W^+\coloneqq\{(w,0)\in W^\Box\mid w\in W\}$ and $W^-\coloneqq\{(0,w)\in W^\Box\mid w\in W\}$. Two rank $n$ submodules $L,L'\subseteq W^\Box$ lie in the same $(P,G\times G)$-double coset if and only if $\operatorname{rk}_D(L\cap W^+)=\operatorname{rk}_D(L'\cap W^+)$ and $\operatorname{rk}_D(L\cap W^-)=\operatorname{rk}_D(L'\cap W^-)$ (and these numbers do not need to be equal). The results in this section carry over to the linear case in particular like in Proposition \ref{stabprop} that the stabilisator in $G\times G$ of a double coset is contained in a product of parabolics of $G$ and contains a $\GL_D$ factor of a Levi which allows one to conclude as in Proposition \ref{sufrat}.
\end{remark}

\section{Intertwining operator}\label{sectionintertwining}
We will now introduce the intertwining operator that is needed to formulate the functional equation satisfied by the doubling zeta integrals. We follow the exposition of Waldspurger in Chapter 4 of \cite{waldspurger2003formule}. A similar approach can be found in \cite{dat2005nu}.\\
 We assume for now that we are in the nondegenerate case. Let $A_0$ be a maximal split torus of $G^\Box$ which is contained in $M$. 
Then we have that $P=MN$ where $M\cong\GL_D(W)$. Let $A_M$ be the largest split torus in the center of $M$, which under the above isomorphism are the diagonal scalar matrices with entries in $E$. 
Note that $\overline{P}$, the opposite parabolic of $P$, satisfies $w_0Pw_0=\overline{P}$ where $w_0=(1,-1)\in G\times G\subseteq G^\Box$.
We choose a left Haar measure on $\overline{N}$ (which via the isomorphism $N\to\overline{N},n\mapsto w_0^{-1}nw_0$ defines a left Haar measure on $N$). Everything in this section depends on the choice of this measure.

We want to construct a nontrivial element of the space $$\Hom_{G^\Box}\left(i_{P}^{G^\Box}(\omega_X\circ\Delta),i_{\overline{P}}^{G^\Box}(\omega_X\circ\Delta)\right),$$
which by Frobenius reciprocity is equivalent to constructing a map in
$$\Hom_M\left(r_{\overline{P}}^{G^\Box}i_{P}^{G^\Box}(\omega_X\circ\Delta),\omega_X\circ\Delta\right).$$
Let $\Lambda$ be the $G^\Box$-representation $r_{\overline{P}}^{G^\Box}i_P^{G^\Box}(\omega_X\circ\Delta)$. Moreover let $^PW^{\overline{P}}\subseteq W_G$ be a set of representatives for $W_M\backslash W_G/W_M$, where $W_H=\operatorname{Norm}_H(A_0)/Z_H(A_0)$ for any subgroup $H$ of $G^\Box$. By the geometric Lemma, $\Lambda$ has a filtration $(\mathcal F_{w})$ where $w\in ^PW^{\overline{P}}$. Namely, we have the Bruhat-Tits decomposition 
$$G^\Box=\bigcup_{w\in ^PW^{\overline{P}}}Pw\overline{P}$$
and we can choose an ordering $1\leq w_1\leq w_2\leq\dotsc \leq w_k$ on $^PW^{\overline{P}}$ such that $\bigcup_{i<l}Pw_i\overline{P}$ is open in $\bigcup_{i\leq l}Pw_i\overline{P}$. In particular $P\overline{P}$ is open in $G^\Box$. Then $\mathcal F_w$ consists of the image in $\Lambda$ of those functions whose support is contained in $\bigcup_{w'\leq w}Pw'\overline{P}$. The quotients $\mathcal F_{w_i}/\mathcal F_{w_{i-1}}$ are isomorphic to 
$$J_{w_i}\coloneqq i^M_{M\cap w_iPw_i^{-1}}\left(w_i\cdot r^M_{M\cap w_i^{-1}\overline{P}w_i}(\omega_X\circ\Delta)\right).$$
The bottom element $J_1$ in the filtration of $\Lambda$ consists of the image in $\Lambda$ of those functions that are supported in $P\overline{P}$. Let $\theta_w\colon A_M\to A[X,X^{-1}]$ be defined via
$$\theta_w(a)\coloneqq\delta_{M\cap wPw^{-1}}(a)^{1/2}\delta_{M\cap w^{-1}\overline{P}w}(w^{-1}aw)^{-1/2}\omega(\operatorname{Nrd}_W(w^{-1}aw))X^{\operatorname{val}_E(\operatorname{Nrd}_W(w^{-1}aw))}.$$
The torus $A_M$ acts on $J_w$ via 
\begin{equation}\label{eq:acquo}a\cdot f=\theta_w(a)f,\end{equation}
where $a\in A_M,f\in J_w$, which shows that $a-\theta_w(a)$ annihilates $J_w$ for all $a\in A_M$.\\
By the proof of Theorem IV.1.1 in \cite{waldspurger2003formule} we have that for $w\in^PW^{\overline{P}}$, where $w\not=1$, there exists an $a_w\in A_M$ such that \begin{equation}\label{waldint}
X^{\operatorname{val}_E(\operatorname{Nrd}_W(w^{-1}a_ww))}\not=X^{\operatorname{val}_E(\operatorname{Nrd}_W(a_w))}.
\end{equation}
Note that $a_w-\theta_w(a_w)$ acts on $J_1$ via 
$$\omega(\operatorname{Nrd}_W(a_w))X^{\operatorname{val}_E(\operatorname{Nrd}_W(a_w))}-\theta_w(a_w),$$
which is nonzero by Equation (\ref{waldint}). We define
$$R=\prod_{w\in^PW^{\overline{P}}\backslash \{1\}}\left(a_w-\theta_w(a_w)\right).$$
By construction if $R$ acts on $\Lambda$ it maps every element of $\Lambda$ into $J_1$. On $J_1$ it acts via multiplication by
$$\widehat{R}=\prod_{w\in^PW^{\overline{P}}\backslash \{1\}}\left(\omega(\operatorname{Nrd}_W(a_w))X^{\operatorname{val}_E(\operatorname{Nrd}_W(a_w))}-\theta_w(a_w)\right),$$
which is an element of the multiplicative subset $S\subseteq A[X,X^{-1}]$. Since $A_M$ is a subset of the center of $M$, acting with $R$ yields an element of $\End_M(\Lambda)$.\\
It is straightforward to see that if the support of $f\in I(X,\omega)$ is contained in $P\overline{P}$, its restriction to $\overline{N}$ has compact support and the integral $\int_{\overline{N}}f(\overline{n})d\overline{n}$ is well defined. This defines an $M$-intertwining map which factors through $J_1$ and we obtain 
\begin{align*}
\Psi\colon J_1&\to (\omega_X\circ\Delta)\\
f&\mapsto\int_{\overline{N}}f(\overline{n})d\overline{n}.
\end{align*}
In conclusion, we obtain a map $\Psi\circ R$ which is an element of
$$\Hom_M\left(r_{\overline{P}}^{G^\Box}i_{P}^{G^\Box}(\omega_X\circ\Delta),\omega_X\circ\Delta\right).$$
Frobenius reciprocity yields a map in
$$\Hom_{G^\Box}\left(i_{P}^{G^\Box}(\omega_X\circ\Delta),i_{\overline{P}}^{G^\Box}(\omega_X\circ\Delta)\right)$$
that is characterized by 
$$f\mapsto(g\mapsto\Psi(R(gf))).$$
Let $\overline{\omega}_X\colon E^\times\to A[X,X^{-1}]$ be defined via $\overline{\omega}_X(e)=\omega(\rho(e))X^{\operatorname{val}_E(e)}$ for $e\in E^\times$. There is a $G^\Box$-intertwining map $i_{\overline{P}}^{G^\Box}(\omega_X\circ\Delta)\to i_P^{G^\Box}(\overline{\omega}_X^{-1}\circ\Delta)$ that is given by $$f\mapsto(g\mapsto f(w_0g)).$$
This follows since $\Delta(w_0mw_0)=\rho(\Delta(m^{-1}))$ for all $m\in M$. Hence we obtain an element $\overline{M}_X$ of 
$$\Hom_{G^\Box}\left(i_{P}^{G^\Box}(\omega_X\circ\Delta),i_{P}^{G^\Box}(\overline{\omega}_X^{-1}\circ\Delta)\right).$$
The map $M_X\colon I(X,\omega)\to S^{-1}A[X,X^{-1}]\otimes I(X,\overline{\omega}_X^{-1})$ that interpolates the well known intertwining operator is then given by
$$M_X(f)\coloneqq\widehat{R}^{-1}\overline{M}_X(f).$$

\begin{remark}
In the linear case we find ourselves in a slightly different situation. Here we have that $G=GL_D(W)$ where $D$ is some central division algebra over $F$ and $W$ is a free left $D$-module of rank $n$. Then $M$ is isomorphic to $\GL_D(W^\bigtriangleup)\times GL_D(W\times W/W^\bigtriangleup)$. Hence the maximal split torus $A_M$ in the center of $M$ consists of two copies of $F^\times$ and the argument in \cite{waldspurger2003formule} to find an element that satisfies (\ref{waldint}) does not carry over directly. However, we can see in a more direct way that for all $w\in^{P}W^{\overline{P}},w\not=1$ we can find an $a_w\in A_M$ such that
$$X^{\operatorname{val}_F(\Delta(a_w))}\not=X^{\operatorname{val}_F(\Delta(wa_ww^{-1}))}.$$
Namely, let $a=(\id,\varpi_F\id)\in A_M$ and hence $\operatorname{val}_F(\Delta(a))=-rn$, where $r^2=\dim_F(D)$. Then for all $w\in {^PW^{\overline{P}}},w\not=1$ we have that $waw^{-1}=(a^1_w,a^2_w)\in \GL_D(W^\bigtriangleup)\times GL_D(W\times W/W^\bigtriangleup)$, where both $a^1_w$ and $a^2_w$ are diagonal matrices whose entries are a permutation of those of $a$. Moreover, $a_1^w$ has some diagonal entries which are $\varpi_F$ instead of $1$ which implies that $\operatorname{val}_F(\Delta(waw^{-1}))>-rn$.
\end{remark}

\begin{remark}
One could also define an intertwining operator via $M_X'(f)(g)=\int_{N}f(w_0ng)dn$, which yields a well-defined Laurent series by Lemma II.3.4 of \cite{waldspurger2003formule}. The same argument as in the proof of Theorem IV.1.1. in \emph{op. cit.} shows that $M_X'=M_X$.
\end{remark}
\section{Functional Equation}\label{sectionfunctional}
In this section we will prove the following functional equation.
\begin{theorem}\label{functionaltheorem} Let $(\pi,V)$ be a smooth admissible, $G$-finite $A[G]$-module which satisfies the conditions in Theorem \ref{theorat}. Moreover, we assume that the canonical trace map $\Phi\colon V\otimes_A\widetilde{V}\to A$ is surjective and that
	$$\Hom_{G}(V\otimes\widetilde{V},A)\cong A.$$
	Then there is an unique $\Gamma(X,\pi,\omega)\in S^{-1}A[X,X^{-1}]$ such that 
	$$Z(X^{-1},\varphi,M_X(f))=\Gamma(X,\pi,\omega) Z(X,\varphi,f)$$
	for all $\varphi\in V\otimes\widetilde{V}$ and $f\in I(X,\omega)$. 
\end{theorem}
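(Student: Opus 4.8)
The plan is to reduce the functional equation to a rank-one statement about a suitable $\Hom$-space, following the strategy sketched in the introduction. First I would take the two maps of interest, namely $f\otimes\varphi\mapsto Z(X,\varphi,f)$ and $f\otimes\varphi\mapsto Z(X^{-1},\varphi,M_X(f))$, and observe that by Proposition \ref{gequi} (applied both for $\pi$ and in the $X^{-1}$-variable, together with the fact that $M_X$ is $G^\Box$-equivariant and in particular $(G\times G)$-equivariant up to the twist built into $I(X,\overline\omega_X^{-1})$) both of them kill the submodule generated by $(g_1,g_2)(\varphi\otimes f)-\kappa(g_1,g_2)\,\varphi\otimes f$. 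Hence, after clearing denominators by a single Laurent polynomial $Q\in S$ coming from Theorem \ref{theorat} and the factor $\widehat R^{-1}$ in the definition of $M_X$, both functionals factor through
$$
\Hom_{A[X,X^{-1}]}\!\left((V\otimes_A\widetilde V)\otimes_{A[G\times G]}I(X,\omega)^{(0)},\,A[X,X^{-1}]\right),
$$
where $I(X,\omega)^{(0)}$ is the bottom piece of the filtration. The key point (and the main obstacle) is to prove that this $\Hom$-module is \emph{free of rank one} over $A[X,X^{-1}]$; granting this, the functional equation follows since $Z(X,\varphi,f)$ itself is a nonzero element of this module (one exhibits explicit $f,\varphi$ making $Z$ a nonzero constant, e.g. using $f$ supported near the identity coset as in Lemma \ref{levelzero}), so $Z(X^{-1},\varphi,M_X(f))$ must be an $S^{-1}A[X,X^{-1}]$-multiple of it, and that multiple is by definition $\Gamma(X,\pi,\omega)$; uniqueness is then automatic because $Z(X,\_,\_)$ is not a zero divisor.

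To compute the $\Hom$-module I would unwind $I(X,\omega)^{(0)}=\cInd^{G\times G}_{G^\triangle}(\xi_X^{(0)})$, where $G^\triangle=\{(g,g)\}$ is the stabilizer of the open orbit $\Omega_0$ and $\xi_X^{(0)}$ is the restriction of $\delta_P^{1/2}\omega_X(\Delta(\_))$ to $G^\triangle$, which equals $\kappa$ by the computation in Proposition \ref{gequi}. Using Lemma \ref{tensprod}, Proposition \ref{comphecke} (valid for the parabolic-type subgroup situation, here with the closed subgroup $G^\triangle\hookrightarrow G\times G$, whose volumes are invertible since $G$ has pro-$p$ open subgroups), and the transitivity of compact induction exactly as in the proof of Lemma \ref{lem:calcmod}, one identifies $(V\otimes\widetilde V)\otimes_{A[G\times G]}I(X,\omega)^{(0)}$ with $(V\otimes\widetilde V)\otimes_{A[G^\triangle]}(\kappa\otimes\delta^{-1})$ up to the appropriate modulus twist, which after absorbing the twist into $\kappa$ is just $(V\otimes\widetilde V)_{G,\kappa}\otimes_A A[X,X^{-1}]$ — the $\kappa$-coinvariants of $V\otimes\widetilde V$ under the diagonal $G$-action, base-changed to $A[X,X^{-1}]$. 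Since $\kappa$ restricted to the diagonal copy of $G$ is trivial (again $\operatorname{val}_E(\Delta(g,g))=0$ and $\delta_P(g,g)=1$), this is $(V\otimes\widetilde V)_G\otimes_A A[X,X^{-1}]$.

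It then remains to identify $\Hom_{A[X,X^{-1}]}\big((V\otimes\widetilde V)_G\otimes_A A[X,X^{-1}],A[X,X^{-1}]\big)$ with $\Hom_A\big((V\otimes\widetilde V)_G,A\big)\otimes_A A[X,X^{-1}]$, which is harmless because $A[X,X^{-1}]$ is free over $A$ and $(V\otimes\widetilde V)_G$ is finitely generated (here admissibility and $G$-finiteness enter). Finally $\Hom_A((V\otimes\widetilde V)_G,A)=\Hom_{A[G]}(V\otimes\widetilde V,A)$, which is assumed isomorphic to $A$; combined with surjectivity of the trace map $\Phi$ — which guarantees that the generator of this $\Hom$ is, up to a unit, $\Phi$ itself and in particular that $(V\otimes\widetilde V)_G$ has $A$ as a quotient, so the identification is genuinely rank one and not spoiled by torsion phenomena — we conclude the $\Hom$-module is free of rank one over $A[X,X^{-1}]$. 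The hardest part to get right is the bookkeeping of the various modulus characters and the twist $\kappa$ through the chain of Hecke-module isomorphisms, and the verification that clearing a single denominator $Q\in S$ simultaneously lands \emph{both} $Z(X,\_,\_)$ and $Z(X^{-1},\_,M_X(\_))$ inside the integral (non-localized) $\Hom$-module, for which one uses Theorem \ref{theorat} for the first, and Theorem \ref{theorat} applied to $\overline\omega_X^{-1}$ together with the $\widehat R^{-1}\in S^{-1}A[X,X^{-1}]$ prefactor for the second.
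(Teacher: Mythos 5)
Your overall strategy is the same as the paper's: reduce to a rank-one statement for a $\Hom$-space built from the bottom piece $I(X,\omega)^{(0)}$ of the filtration, use Proposition~\ref{gequi} to factor the two zeta functionals through the $(G\times G,\kappa)$-coinvariants, clear denominators via Theorem~\ref{theorat} and $\widehat R$, and then pin down the proportionality constant using the test vectors of Proposition~\ref{exone}. The structure of Proposition~\ref{exgamma} in the paper is exactly the reduction you describe.

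Where you genuinely diverge is in how you prove the $\Hom$-space is free of rank one. You first pass to coinvariants, identifying $(V\otimes\widetilde V\otimes\kappa^{-1})\otimes_{A[G\times G]}I(X,\omega)^{(0)}$ with $(V\otimes\widetilde V)_G\otimes_A A[X,X^{-1}]$ via the Hecke-module machinery, and then commute $\Hom_A(-,-)$ past the flat base change $A\to A[X,X^{-1}]$ using the assertion that $(V\otimes\widetilde V)_G$ is a finitely generated (hence finitely presented, $A$ being noetherian) $A$-module. The paper instead never touches the coinvariants as an $A$-module: it dualizes $S(G,\kappa)$ and applies Frobenius reciprocity to land directly in $\Hom_{A[G]}(V\otimes\widetilde V,A[X,X^{-1}])$, and then uses the coefficient maps $\alpha_i\colon A[X,X^{-1}]\to A$ to write any $\phi$ as $\sum_i\phi_iX^i$ with each $\phi_i\in\Hom_{A[G]}(V\otimes\widetilde V,A)\cong A$ a multiple of the trace; surjectivity of $\Phi$ is then what forces all but finitely many coefficients to vanish. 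Your route is correct, but it hinges on the unproved claim that $(V\otimes\widetilde V)_G$ is finitely generated over $A$. That claim is in fact true — with $K$ pro-$p$ fixing a finite $A[G]$-generating set $\{v_i\}$ of $V$ and with $p$ invertible, every class in the coinvariants can be rewritten as $\sum v_i\otimes e_K\lambda$ with $e_K\lambda\in\widetilde V^K\cong\Hom_A(V^K,A)$, which is finitely generated by admissibility — but it needs to be said; as written you have a gap exactly where the paper's coefficient-map argument does the heavy lifting. Your aside that surjectivity of $\Phi$ is needed to rule out ``torsion phenomena'' in the base-changed $\Hom$-module is not really where surjectivity enters: once $\Hom_A((V\otimes\widetilde V)_G,A)\cong A$ and finite presentation are in hand, the base change gives $A[X,X^{-1}]$ with no further input; surjectivity of $\Phi$ is used only through Proposition~\ref{exone} to show that $Z(X,\_,\_)$ actually hits $1$ and is therefore a free generator, so that $\widehat Z$ is an honest $A[X,X^{-1}]$-multiple of it.
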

The proof of this result will take up the rest of this section. Note that $\Gamma(X,\pi,\omega)$ depends on the choice of a Haar measure for $N$ in the definition of the intertwining operator $M_X$. Moreover, in this section we will often view the doubling zeta integral as a function on $(V\otimes\widetilde{V})\otimes I(X,\omega)$ in the straigtforward way. From now on we assume that $(\pi,V)$ satisfies the conditions of the above theorem. First we will show that we can choose input data such that the doubling zeta integral equals one. 
\begin{proposition}\label{exone}
	There are $\varphi\in M(\pi)$ and $f\in I(X,\omega)$ such that $Z(X,\varphi,f)=1$.
\end{proposition}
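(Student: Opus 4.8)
The plan is to exploit the open orbit $\Omega_0 = P(G\times 1)$ and the fact, established in the proof of Lemma \ref{levelzero}, that an element $f\in I(X,\omega)^{(0)}$ restricted to $G\times 1$ has compact support. The key point is that on the open orbit the doubling zeta integral is essentially a sum of matrix coefficients weighted by values of $f$, and one has complete freedom to prescribe $f$ on a small neighbourhood of the identity. So first I would fix a small compact open subgroup $K_0\subseteq K\cap G$ (sitting inside the chosen maximal compact $K\subseteq G^\Box$ via the embedding $G\hookrightarrow G^\Box$) and fix a vector $v\in V$ and $\lambda\in\widetilde V$ with $\lambda(v)\neq 0$; shrinking $K_0$ if necessary we may assume $v\in V^{K_0}$ and $\lambda\in\widetilde V^{K_0}$, so that the matrix coefficient $\varphi(g)=\lambda(\pi(g)v)$ is bi-$K_0$-invariant and $\varphi(1)=\lambda(v)$.

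Next I would construct $f\in I(X,\omega)^{(0)}$ supported, modulo $P$, on the single coset $P\cdot(K_0\times 1)$, with $f(1,1)$ a prescribed constant (independent of $X$). Concretely, since $P\backslash P(K_0\times 1)\cong (P\cap (K_0\times 1))\backslash(K_0\times 1)$ is compact, one defines $f$ on $P(K_0\times 1)$ by the rule $f(s\cdot(k,1)) = \delta_P(s)^{1/2}\omega_X(\Delta(s))\,c$ for $s\in P$, $k\in K_0$, and extends by zero to the complement of $\Omega_0$ (legitimate because $\Omega_0$ is open and $\bigcup_{k\ge 1}\Omega_k$ is closed); here $c\in A$ is a constant to be chosen. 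This $f$ is smooth, lies in $I(X,\omega)^{(0)}$, and by Lemma \ref{levelzero} its doubling zeta integral is a genuine Laurent polynomial. The support of $f|_{G\times 1}$ is contained in $(P\cap(G\times 1))\cdot K_0 = \{(g,g):g\in G\}\cap\text{(something compact)}$; more precisely $\operatorname{supp}(f|_{G\times 1})\subseteq Z\cdot K_0$ where the only $g$ with $(g,1)\in P(K_0\times 1)$ and $\operatorname{val}_E(\Delta(g,1))=0$ form a single $K_0$-coset, so after choosing $N$ large enough the integral collapses to $\int_{K_0} f(g,1)\varphi(g)\,dg = c\cdot\mu_G(K_0)\cdot\varphi(1)$, a constant in $A$ with no $X$-dependence.

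The final step is to arrange that this constant equals $1$. The obstacle here is purely that $c\cdot\mu_G(K_0)\cdot\lambda(v)$ must be a unit — a priori it lies in $A$, not $A^\times$. This is where the hypothesis that the trace map $\Phi\colon V\otimes_A\widetilde V\to A$ is surjective enters: surjectivity guarantees there exist $v_i\in V$, $\lambda_i\in\widetilde V$ with $\sum_i\lambda_i(v_i)=1$. Taking $\varphi=\sum_i\varphi_i$ with $\varphi_i(g)=\lambda_i(\pi(g)v_i)$ (all fixed by a common small $K_0$, after shrinking), and taking $f$ as above with $c=\mu_G(K_0)^{-1}$ — note $\mu_G(K_0)\in A^\times$ since it is a power of $p$ and $A$ is a $\mathbb Z[1/p]$-algebra — we get $Z(X,\varphi,f) = \mu_G(K_0)^{-1}\cdot\mu_G(K_0)\cdot\sum_i\lambda_i(v_i) = 1$. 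I expect the main technical care to go into verifying that the support of $f|_{G\times 1}$ really is contained in one $K_0$-coset where $\operatorname{val}_E(\Delta) = 0$ and that $f$ so defined is genuinely smooth as a $G^\Box$-representation vector; both follow from Proposition \ref{delprop} and the openness of $\Omega_0$, but they are the points that need to be written out.
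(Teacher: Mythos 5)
Your proposal is correct and is essentially the paper's own argument (which follows Rallis--Soudry): both use surjectivity of the trace map to produce $\varphi=\sum_i v_i\otimes\lambda_i$ with $\sum_i\lambda_i(v_i)=1$, take $f$ supported in the open orbit $P(G\times 1)$ restricting to (a scalar multiple of) the characteristic function of a small compact open $K$ fixing the $v_i$, and collapse the integral to $\mu_G(K)^{-1}\int_K\sum_i\lambda_i(\pi(g)v_i)\,dg=1$. The extra care you flag about the support of $f|_{G\times 1}$ and smoothness is reasonable but the paper simply asserts the existence of such an $f$; your construction fills in that detail correctly.
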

\begin{proof} We follow the proof of Theorem 3.1 in \cite{rallis2005stability}. Let $\varphi=\sum_{i=1}^nv_i\otimes\lambda_i$ be an element of $V\otimes_A\widetilde{V}$ such that $\Phi(\varphi)=1$. We choose an compact open subgroup $K$ of $G$ such that $v_i\in V^K$ for $1\leq i\leq n$ and such that the volume $\mu_G(K)$ of $K$ is invertible in $A$. Let $f_0\in I(X,\omega)$ be the function which is supported in $P(G\times 1)$ and when restricted to $(G\times 1)$ equals the characteristic function of $(K\times 1)$. Then for all $N\geq 1$ we have that
	
	\begin{align*}
	Z_N\left(X,\varphi,\frac{f_0}{\mu_G(K)}\right)&=\mu_G(K)^{-1}\int_G\alpha_N(g)f_0(g,1)\sum_{i=1}^n\lambda_i(\pi(g)v_i)dg\\
	&=\mu_G(K)^{-1}\int_K\sum_{i=1}^n\lambda_i(\pi(g)v_i)dg,
	\end{align*}
	which equals one and hence implies the result.
\end{proof}
Let $I(X^{-1},\overline{\omega}^{-1})\coloneqq i_P^{G^\Box}(\overline{\omega}_X^{-1}\circ\Delta)$.
We will denote the space $(V\otimes\widetilde{V}\otimes\kappa^{-1})\otimes I(X,\omega)$ by $I$ and by $I^{(0)}$ the subspace $(V\otimes\widetilde{V}\otimes\kappa^{-1})\otimes I(X,\omega)^{(0)}$ of $I$. Let $T$ (respectively $T^{(0)}$) be the subspaces of $I$ generated by ($\varphi\otimes gf-\varphi g\otimes f$) where $\varphi\in V\otimes\widetilde{V}\otimes\kappa^{-1},g\in G\times G$ and $f\in I(X,\omega)$ (respectively $f\in I(X,\omega)^{(0)}$). We denote the space $(V\otimes\widetilde{V}\otimes\kappa^{-1})\otimes I(X^{-1},\overline{\omega}^{-1})$ by $\hat{I}$ and by $\hat{I}^{(0)},\hat{T}$ and $\hat{T}^{(0)}$ the subspaces of $\hat{I}$ with analogous definitions as above for $I$. 
\begin{proposition}\label{exgamma}
	Suppose the representation $(\pi,V)$ satisfies $$\Hom_{A[X,X^{-1}]}(I^{(0)}/T^{(0)},A[X,X^{-1}])\cong A[X,X^{-1}].$$ 
	Then there is a gamma factor $\Gamma(X,\pi,\omega)\in S^{-1}A[X,X^{-1}]$ such that 
	$$Z(X^{-1},\varphi,M_X(f))=\Gamma(X,\pi,\omega) Z(X,\varphi,f)$$
	for all $\varphi\in V\otimes\widetilde{V}$ and $f\in I(X,\omega)$.
\end{proposition}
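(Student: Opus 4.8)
The plan is to reduce the desired functional equation to a statement about the bottom piece $I^{(0)}/T^{(0)}$ of the filtration of Section~\ref{sectionrational}, and then to invoke the rank‑one hypothesis there. Put $\Psi_1(\varphi\otimes f)\coloneqq Z(X,\varphi,f)$ and $\Psi_2(\varphi\otimes f)\coloneqq Z(X^{-1},\varphi,M_X(f))$. Both are bilinear, and by Proposition~\ref{gequi} (applied to $I(X,\omega)$, and for $\Psi_2$ also to $I(X^{-1},\overline{\omega}^{-1})$ — the two associated characters of $G\times G$ coincide because $\operatorname{Nrd}_W(g)\rho(\operatorname{Nrd}_W(g))=1$ for $g\in G$) together with the $G^\Box$‑equivariance of $M_X$, they both factor through $\bar I\coloneqq M(\pi)^\kappa\otimes_{A[G\times G]}I(X,\omega)$; I will work on $\bar I$ and only at the end pull back along the surjection $I^{(0)}/T^{(0)}\twoheadrightarrow\bar I^{(0)}\coloneqq M(\pi)^\kappa\otimes_{A[G\times G]}I(X,\omega)^{(0)}$ induced by $V\otimes\widetilde{V}\twoheadrightarrow M(\pi)$. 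Recall finally that $M_X=\widehat{R}^{-1}\overline{M}_X$ with $\widehat{R}\in S$ and $\overline{M}_X(f)\in I(X^{-1},\overline{\omega}^{-1})$, so $\widehat{R}\,\Psi_2(\varphi\otimes f)=Z(X^{-1},\varphi,\overline{M}_X(f))$.

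\emph{Reduction to level zero.} Let $\bar F^{(j)}\subseteq\bar I$ be the image of $M(\pi)^\kappa\otimes_A I(X,\omega)^{(j)}$. The computation behind Theorem~\ref{theorat}, i.e.\ Lemma~\ref{lem:calcmod} together with Proposition~\ref{sufrat}, shows that $\bar F^{(j)}/\bar F^{(j-1)}$ is a quotient of $(M(\pi)^\kappa\otimes_A I(X,\omega)^{(j)})/(S_j+T_j)\cong M(\pi)^\kappa_{N_j\times N_j}\otimes_{A[M_j\times M_j]}C_j(X,\omega)$ and is therefore annihilated by some $Q_j\in S$; hence $Q^\ast\coloneqq Q_1\cdots Q_r\in S$ satisfies $Q^\ast\cdot\bar I\subseteq\bar F^{(0)}$. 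Let $q\colon\bar I^{(0)}\to\bar I$ be the natural map, whose image is $\bar F^{(0)}$. The restriction of $\Psi_1$ to $\bar F^{(0)}$ is $A[X,X^{-1}]$‑valued by Lemma~\ref{levelzero}, and, choosing via Theorem~\ref{theorat} (applied to $I(X^{-1},\overline{\omega}^{-1})$) a $Q'\in S$ with $Q'\,Z(X^{-1},\varphi,g)\in A[X,X^{-1}]$ for all $g$, the functional $Q'\widehat{R}\,\Psi_2$ is $A[X,X^{-1}]$‑valued on all of $\bar I$. Thus $z_1\coloneqq\Psi_1\circ q$ and $z_2\coloneqq(Q'\widehat{R}\,\Psi_2)\circ q$ lie in $\Hom_{A[X,X^{-1}]}(\bar I^{(0)},A[X,X^{-1}])$, and for every $m\in\bar I$, choosing $y\in\bar I^{(0)}$ with $q(y)=Q^\ast m$, one has $Q^\ast\Psi_1(m)=z_1(y)$ and $Q^\ast Q'\widehat{R}\,\Psi_2(m)=z_2(y)$.

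\emph{The rank‑one step.} Pulling $z_1,z_2$ back along $I^{(0)}/T^{(0)}\twoheadrightarrow\bar I^{(0)}$, I view them inside $\Hom_{A[X,X^{-1}]}(I^{(0)}/T^{(0)},A[X,X^{-1}])$, which by hypothesis is free of rank one; fix a generator $z_0$ and write $z_1=a z_0$, $z_2=b z_0$ with $a,b\in A[X,X^{-1}]$. Proposition~\ref{exone} provides $\varphi\in M(\pi)$ and $f$ supported in $\Omega_0$, hence in $I(X,\omega)^{(0)}$, with $Z(X,\varphi,f)=1$; this produces $y_0\in\bar I^{(0)}$ with $z_1(y_0)=1$, so $a\,z_0(y_0)=1$ and $a\in A[X,X^{-1}]^\times$. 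Substituting $z_1=az_0$, $z_2=bz_0$ into the two identities above gives $b\cdot Q^\ast\Psi_1(m)=ab\,z_0(y)=a\cdot Q^\ast Q'\widehat{R}\,\Psi_2(m)$ for all $m\in\bar I$, and cancelling $Q^\ast$ (a non‑zero‑divisor in $A[X,X^{-1}]$, hence a unit in $S^{-1}A[X,X^{-1}]$) yields $b\,\Psi_1=a\,Q'\widehat{R}\,\Psi_2$ on $\bar I$. Consequently $\Gamma(X,\pi,\omega)\coloneqq a^{-1}(Q'\widehat{R})^{-1}b\in S^{-1}A[X,X^{-1}]$ satisfies $Z(X^{-1},\varphi,M_X(f))=\Gamma(X,\pi,\omega)\,Z(X,\varphi,f)$ for all $\varphi\in V\otimes\widetilde{V}$, $f\in I(X,\omega)$; and it is forced to be unique, since the data of Proposition~\ref{exone} gives $\Psi_1=1$ there.

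\emph{Main obstacle.} The substantive difficulty is the reduction to level zero: exhibiting one fixed $Q^\ast\in S$ — drawn from the rationality filtration of Section~\ref{sectionrational} — that pushes \emph{both} $\Psi_1$ and the intertwined functional $\Psi_2$ (whose support along the $G\times G$‑orbits of $P\backslash G^\Box$ is not directly under control) into $\bar F^{(0)}$, and then correctly identifying the resulting level‑zero functionals with genuine elements of $\Hom_{A[X,X^{-1}]}(I^{(0)}/T^{(0)},A[X,X^{-1}])$ despite the non‑injectivity of the maps $I^{(0)}/T^{(0)}\to\bar I^{(0)}\xrightarrow{\,q\,}\bar I$. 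Once these identifications are set up, the rest is formal.
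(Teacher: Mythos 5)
Your proof is correct and follows essentially the same strategy as the paper's: reduce both functionals to the bottom piece of the orbit filtration by multiplying with an element of $S$, use the rank-one hypothesis together with Proposition \ref{exone} to obtain proportionality there, and then clear denominators to propagate the identity to all of $I(X,\omega)$. The only cosmetic difference is that you make $Z(X^{-1},\varphi,\overline{M}_X(f))$ polynomial-valued by invoking Theorem \ref{theorat} for $I(X^{-1},\overline{\omega}^{-1})$, whereas the paper pushes forward along $\hat{M}_X$ and multiplies by a $\widehat{Q}$ landing in $\hat{I}^{(0)}+\hat{T}$ — the same mechanism packaged slightly differently.
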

\begin{proof}
	In the above section we constructed an element $\overline{M}_X$ of
	$$\Hom_{G^\Box}\left(I(X,\omega),I(X^{-1},\overline{\omega}^{-1})\right),$$
	which clearly induces a map from $I$ to $\hat{I}$ and hence also a map
	$$\hat{M}_X\colon I\to\hat{I}/\hat{T}.$$
	Since $\hat{M}_X$ is a $G^\Box$-intertwining it factors through $T$. From Section \ref{sectionrational} we know that there is a polynomial $\widehat{Q}$ such that for all $\alpha\in\hat{I}$ we have that $\widehat{Q}\alpha\in\hat{I}^{(0)}+\hat{T}$. The doubling zeta integral is a well defined map from $(\hat{I}^{(0)}+\hat{T})/\hat{T}$ to $A[X,X^{-1}]$. Overall we obtain a map $\widehat{Z}$ which is defined as the composition
	$$ I/T\xrightarrow[]{\hat{M}_X}\hat{I}/\hat{T}\xrightarrow[]{\widehat{Q}}(\hat{I}^{(0)}+\hat{T})/\hat{T}\xrightarrow[]{Z(X^{-1},\_)}A[X,X^{-1}].$$
	Since we have a canonical map $I^{(0)}/T^{(0)}\to I/T$ we obtain an element of $$\Hom_{A[X,X^{-1}]}(I^{(0)}/T^{(0)},A[X,X^{-1}]),$$
	which by abuse of notation we will also denote by $\widehat{Z}$.
	Clearly, $Z(X,\_)$ is also an element of the above space.
	By Proposition \ref{exone} there is an element $\alpha_0\in I^{(0)}$ such that $Z(X,\alpha_0)=1$ and since we assume that $$\Hom_{A[X,X^{-1}]}(I^{(0)}/T^{(0)},A[X,X^{-1}])\cong A[X,X^{-1}]$$ we obtain that there is an element $\widehat{\Gamma}\in A[X,X^{-1}]$ such that for all $\varphi\otimes f\in I^{(0)}$ we have
	$$\widehat{Z}(\varphi\otimes f)=\widehat{\Gamma} Z(X,\varphi\otimes f).$$
	We now show that if the above equation holds on $I^{(0)}$ then it also holds on $I$. There is a Laurent polynomial $Q\in S$ such that for any $\alpha\in I$ we have $Q\alpha=\alpha^{(0)}+t$ where $\alpha^{(0)}\in I^{(0)}$ and $t\in T$. We obtain
	\begin{align*}
	Q\widehat{\Gamma}\cdot Z(X,\alpha)=\widehat{\Gamma}Z(X,Q\alpha)&=\widehat{\Gamma}Z(X,\alpha^{(0)})\\
	&=\widehat{Z}(\alpha^{(0)})\\
	&=Z(X^{-1},\widehat{Q}\hat{M}_X(Q\alpha-t))\\
	&=QZ(X^{-1},\widehat{Q}\hat{M}_X(\alpha))\\
	&=Q\widehat{Z}(\alpha),
	\end{align*}
	which implies the equation
	\begin{equation}
	\widehat{Z}(\varphi,f)=\widehat{\Gamma}Z(X,\varphi,f)
	\end{equation}
	in $S^{-1}A[X,X^{-1}]$ for all $\varphi\in V\otimes\widetilde{V}$ and $f\in I(X,\omega)$. If we multiply this equation by $\widehat{Q}^{-1}$ and by $\widehat{R}^{-1}$ (which was defined in the last section on intertwining operators) we obtain that
	$$Z(X^{-1},\varphi,M_X(f))=\widehat{Q}^{-1}\widehat{R}^{-1}\widehat{\Gamma}Z(X,\varphi,f)$$
	for all $\varphi\in V\otimes\widetilde{V}$ and $f\in I(X,\omega)$. We set
	$$\Gamma(X,\pi,\omega)\coloneqq\widehat{Q}^{-1}\widehat{R}^{-1}\widehat{\Gamma}\in S^{-1}A[X,X^{-1}]$$
	and the result follows.
	
\end{proof}

Note that $I(X,\omega)^{(0)}$ can be identified as an $A[X,X^{-1}](G\times G)$-module with $$S(G,\kappa)\coloneqq\operatorname{c-Ind}_G^{G\times G}(\kappa\otimes_AA[X,X^{-1}])$$ where $G$ is embedded diagonally into $G\times G$ and acts trivially on $A[X,X^{-1}]$.
\begin{lemma}
	We have that
	$$\Hom_{A[X,X^{-1}]}(I^{(0)}/T^{(0)},A[X,X^{-1}])\cong\Hom_{A[G]}(V\otimes\widetilde{V},A[X,X^{-1}]),$$
	where $G$ acts diagonally on $V\otimes\widetilde{V}$.
\end{lemma}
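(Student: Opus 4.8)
The plan is to compute $I^{(0)}/T^{(0)}$ explicitly as an $A[X,X^{-1}]$-module and then dualize. By the very definition of $T^{(0)}$ together with the description of coinvariants in Section \ref{sectionnotpre}, there is an $A[X,X^{-1}]$-linear identification
$$I^{(0)}/T^{(0)}\;\cong\;\bigl(V\otimes\widetilde{V}\otimes\kappa^{-1}\bigr)\otimes_{A[G\times G]}I(X,\omega)^{(0)},$$
where $V\otimes\widetilde{V}$ carries the $(G\times G)$-action of Section \ref{sectiondoublingzeta}, made into a right $A[G\times G]$-module in the standard way. Using the identification $I(X,\omega)^{(0)}\cong S(G,\kappa)=\cInd_G^{G\times G}(\kappa\otimes_AA[X,X^{-1}])$ recalled just above, together with Lemma \ref{tensprod}, this becomes $(V\otimes\widetilde{V}\otimes\kappa^{-1})\otimes_{\mathcal H(G\times G)}\cInd_G^{G\times G}(\kappa\otimes_AA[X,X^{-1}])$.

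Next I would unwind the compact induction by invoking Proposition \ref{comphecke} for the diagonal embedding $G\hookrightarrow G\times G$. Its hypothesis is met: choose a neighbourhood basis of \emph{pro-$p$} open compact subgroups $\{K_i\}$ of $G\times G$; for any $g\in G\times G$ the group $gK_ig^{-1}\cap G$ is a pro-$p$ open compact subgroup of $G$, so its Haar volume is, up to the chosen normalisation, a power of $p$, hence a unit of the $\mathbb Z[1/p]$-algebra $A$ (the same verification as for parabolic subgroups after Proposition \ref{comphecke}). Since $G$ is unimodular, $\delta_G=1$, so Proposition \ref{comphecke} gives $\cInd_G^{G\times G}(\kappa\otimes_AA[X,X^{-1}])\cong\mathcal H(G\times G)\otimes_{\mathcal H(G)}(\kappa\otimes_AA[X,X^{-1}])$ as $(G\times G)$-modules. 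Substituting this, applying the associativity isomorphism $U\otimes_{\mathcal H(G\times G)}(\mathcal H(G\times G)\otimes_{\mathcal H(G)}U')\cong U\otimes_{\mathcal H(G)}U'$, and then Lemma \ref{tensprod} once more, I obtain
$$I^{(0)}/T^{(0)}\;\cong\;\bigl(V\otimes\widetilde{V}\otimes\kappa^{-1}\bigr)\big|_G\otimes_{A[G]}\bigl(\kappa\otimes_AA[X,X^{-1}]\bigr),$$
with $G$ now sitting diagonally. The character $\kappa$ takes values in $A^\times$ and appears with opposite signs on the two factors, so the relations defining this balanced tensor product reduce to those defining the coinvariants of $V\otimes\widetilde{V}$ under the diagonal action; hence $I^{(0)}/T^{(0)}\cong(V\otimes\widetilde{V})_G\otimes_AA[X,X^{-1}]$, with $G$ acting diagonally on $V\otimes\widetilde{V}$ and trivially on the $A[X,X^{-1}]$ factor.

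Finally I would conclude with two elementary adjunctions. Base change for $\Hom$ yields
$$\Hom_{A[X,X^{-1}]}\bigl((V\otimes\widetilde{V})_G\otimes_AA[X,X^{-1}],\,A[X,X^{-1}]\bigr)\;\cong\;\Hom_A\bigl((V\otimes\widetilde{V})_G,\,A[X,X^{-1}]\bigr),$$
and, since $G$ acts trivially on $A[X,X^{-1}]$, the universal property of coinvariants identifies the right-hand side with $\Hom_{A[G]}(V\otimes\widetilde{V},A[X,X^{-1}])$, $G$ acting diagonally on $V\otimes\widetilde{V}$. Composing the displayed isomorphisms gives the lemma. Essentially everything here is formal manipulation of Hecke modules; the one step that requires a little care — and which I would regard as the main obstacle — is checking the hypothesis of Proposition \ref{comphecke} for the diagonal $G$, handled above by passing to pro-$p$ level, together with the bookkeeping that every intermediate isomorphism is $A[X,X^{-1}]$-linear, which is automatic since $X$ only ever sits in a coefficient ring on which the relevant groups act trivially.
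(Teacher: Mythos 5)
Your proof is correct, and it takes a genuinely different route from the paper's. The paper dualizes immediately: the tensor--Hom adjunction turns $\Hom_{A[X,X^{-1}]}(I^{(0)}/T^{(0)},A[X,X^{-1}])$ into $\Hom_{A[G\times G]}(V\otimes\widetilde V\otimes\kappa^{-1},\widetilde{S(G,\kappa)})$, then Vigneras's duality $\widetilde{\cInd_H^G(\sigma)}\cong\Ind_H^G(\widetilde\sigma)$ (Chapter I.5.11 of \cite{vigneras1996representations}) and Frobenius reciprocity finish the job in two lines. You instead compute the module $I^{(0)}/T^{(0)}$ itself, using Proposition \ref{comphecke} applied to the diagonal embedding $G\hookrightarrow G\times G$, obtaining the explicit identification $I^{(0)}/T^{(0)}\cong(V\otimes\widetilde V)_G\otimes_A A[X,X^{-1}]$ before dualizing at the end. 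Your verification of the hypothesis of Proposition \ref{comphecke} for the diagonal subgroup is correct and in fact cleaner than the paper's verification for parabolics: taking a pro-$p$ neighbourhood basis $\{K_i\}$ of $G\times G$, each $gK_ig^{-1}\cap G$ is a pro-$p$ open compact subgroup of $G$ whose volume is automatically a unit in the $\ZZ[1/p]$-algebra $A$, so the compactness bookkeeping the paper does for $P\backslash G$ is unnecessary here (and, by the same pro-$p$ observation, was not strictly necessary there either). The paper's route is shorter and never materialises $I^{(0)}/T^{(0)}$; yours is slightly longer but yields the explicit description of $I^{(0)}/T^{(0)}$ as a byproduct and avoids invoking the duality $\widetilde{\cInd}\cong\Ind\,\widetilde{(\cdot)}$, relying only on Hecke-algebra machinery already deployed in Section \ref{sectionrational}. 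Both are sound; the choice is a matter of taste.
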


\begin{proof}
	Note that
	$$\Hom_{A[X,X^{-1}]}\left((V\otimes_A\widetilde V\otimes_A\kappa^{-1})\otimes_{A[G\times G]}S(G,\kappa),A[X,X^{-1}]\right)\cong\Hom_{A[G\times G]}\left(V\otimes_A\widetilde V\otimes_A\kappa^{-1},\widetilde{S(G,\kappa)}\right).$$
By Chapter I., Section 5.11 of \cite{vigneras1996representations} we see that 
$\widetilde{S(G,\kappa)}$ is isomorphic to $\Ind_G^{G\times G}(\kappa^{-1}\otimes_AA[X,X^{-1}])$. We obtain
	\begin{align*}
	\Hom_{A[G\times G]}\left(V\otimes_A\widetilde V\otimes_A\kappa^{-1},\widetilde{S(G,\kappa)}\right)&\cong\Hom_{A[G\times G]}\left(V\otimes_A\widetilde{V}\otimes_A\kappa^{-1},\Ind_G^{G\times G}(\kappa^{-1}\otimes_AA[X,X^{-1}])\right)\\
	&\cong\Hom_{A[G]}\left(V\otimes_A\widetilde{V}\otimes_A\kappa^{-1},\kappa^{-1}\otimes_AA[X,X^{-1}]\right)\\
	&\cong\Hom_{A[G]}\left(V\otimes_A\widetilde{V},A[X,X^{-1}]\right)
	\end{align*}
	via Frobenius reciprocity.
\end{proof}
Hence the functional equation for the doubling zeta integral for the representation $(\pi,V)$ follows if
\begin{equation}\label{funceqequi}
\Hom_{G}(V\otimes\widetilde{V},A[X,X^{-1}])\cong A[X,X^{-1}].
\end{equation}
Recall that we assume that the canonical trace map $\Phi\colon V\otimes_A\widetilde{V}\to A$ is surjective. We will show that in this case Equation (\ref{funceqequi}) follows from $\Hom_{A[G]}(V\otimes_A\widetilde{V},A)\cong A$, i.e. every map in this space is given by
$$w\otimes\lambda\mapsto c\lambda(w),$$
for some constant $c\in A$. Indeed, for every $i\in\mathbb Z$ we have the $A$-linear coefficient map $\alpha_i\colon A[X,X^{-1}]\to A$ which sends each Laurent polynomial to its $i$th coefficient. Hence for $$\phi\in\Hom_{A[G]}\left((V\otimes_A\widetilde{V}),A[X,X^{-1}]\right)$$ and $i\in\mathbb Z$ we can define a map $\phi_i\coloneqq\alpha_i\circ\phi$ which is an element of $\Hom_{A[G]}(V\otimes_A\widetilde{V},A)$ and hence given by $$\phi_i(v\otimes\lambda)=c_i\lambda(v)$$
for all $v\in V,\lambda\in\widetilde{V}$ and for some $c_i\in A$. Since $\phi=\sum_{i=-\infty}^{\infty}\phi_iX^i$ this proves that $$\phi(v\otimes\lambda)=\left(\sum_{i=-\infty}^\infty c_iX^i\right)\lambda(v)$$
for all $v\in V,\lambda\in\widetilde{V}$. Now since we assume that the canonical trace map $V\otimes_A\widetilde{V}\to A$ is surjective we obtain that only finitely many of the $c_i$ are nonzero which proves that 
$$\Hom_{A[G]}\left((V\otimes_A\widetilde{V}),A[X,X^{-1}]\right)\cong A[X,X^{-1}]$$
and concludes our proof of Theorem \ref{functionaltheorem}.
\begin{definition}
Let $(\pi,V)$ be an admissible, $G$-finite representation of $G$, such that the assumptions in Theorem \ref{functionaltheorem} hold. We denote by $\Gamma(X,\pi,\omega)\in S^{-1}A[X,X^{-1}]$ (for a fixed measure on $N$) the \emph{unnormalized gamma factor} associated to $(\pi,V)$ and the character $\omega$.
\end{definition}
\begin{example}
	Suppose that $A$ is a field. Then any admissible representation is reflexive, i.e. $\widetilde{\widetilde{V}}\cong V$. We obtain
	$$\Hom_G(V\otimes\widetilde{V},A)\cong \Hom_G(V,\widetilde{\widetilde{V}})\cong\Hom_G(V,V)$$
	and hence in this case the functional equation holds for all admissible, $G$-finite representations that satisfy Schur's Lemma.
\end{example}
\subsection{Gamma factor and tensor products}
Let $A'$ be a noetherian $\mathbb Z[1/\sqrt{p}]$-algebra and suppose we have a $\mathbb Z[1/\sqrt{p}]$-algebra homomorphism $\alpha\colon A\to A'$. Moreover, we assume that $V\otimes_AA'$ satisfies the conditions of Theorems \ref{theorat} and \ref{functionaltheorem}. Let $\hat{\alpha}\colon A[[X]][X^{-1}]\to A'[[X]][X^{-1}]$ be the canonical map induced by $\alpha$.
\begin{proposition} We have that
	$$\hat{\alpha}(Z(X,\varphi,f))=Z(X,\alpha\circ\varphi,\alpha\circ f)$$
	and
$$\hat{\alpha}(\Gamma(X,\pi,\omega))=\Gamma(X,\pi\otimes A',\omega\otimes A'),$$
i.e. the gamma factor we just defined behaves well under base-change.
\end{proposition}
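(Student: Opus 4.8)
The plan is to prove the two claimed identities separately, with the first (the zeta integral) being the genuine content and the second (the gamma factor) following formally from it together with the characterizing functional equation.

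For the first identity I would unwind the definition of $Z(X,\varphi,f)$ as the limit of the truncated integrals $Z_N(X,\varphi,f)=\int_G\alpha_N(g)f(g,1)\varphi(g)\,dg$. The key observation is that the ring map $\hat\alpha$ is built coefficient-by-coefficient, so it suffices to check the identity at the level of each $X$-coefficient $a_j(\varphi,f)$, and each such coefficient is, for $N$ large enough, an honest finite $A$-linear combination of values of $f(g,1)\varphi(g)$ — indeed $Z_N(X,\varphi,f)$ is a genuine integral of a compactly-supported locally constant function with values in $A[X,X^{-1}]$, hence a finite $A[X,X^{-1}]$-linear combination of point values (using that $\mu_G$ is $A$-valued and Vignéras's Haar measure is compatible with base change along $\mathbb Z[1/\sqrt p]$-algebra maps). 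Since $\alpha$ is a ring homomorphism, it commutes with such finite sums, and one checks that $\alpha\circ f$ is still an element of $I(X,\omega\otimes A')$ (the defining equivariance $f(sx)=\delta_P(s)^{1/2}\omega_X(\Delta(s))f(x)$ is preserved because $\delta_P$ takes values in powers of $p$, which are units mapped to units, and $\omega_X$ is post-composed with $\alpha$) and that $\alpha\circ\varphi$ is still a matrix coefficient of $\pi\otimes A'$. One must also verify that the stabilization index $N_j$ for each coefficient can be taken uniformly, i.e. that $\alpha(a_j(N,\varphi,f))=a_j(N,\alpha\circ\varphi,\alpha\circ f)$ already at the same truncation level $N$; this is clear since $\alpha_N$ and the support conditions from Proposition~\ref{delprop} are purely group-theoretic and independent of the coefficient ring. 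Assembling the coefficients gives $\hat\alpha(Z(X,\varphi,f))=Z(X,\alpha\circ\varphi,\alpha\circ f)$.

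For the second identity I would use the uniqueness in Theorem~\ref{functionaltheorem}: $\Gamma(X,\pi\otimes A',\omega\otimes A')$ is the \emph{unique} element of $S^{-1}A'[X,X^{-1}]$ satisfying $Z(X^{-1},\varphi',M_X(f'))=\Gamma(X,\pi\otimes A',\omega\otimes A')\,Z(X,\varphi',f')$ for all $\varphi'\in(V\otimes\widetilde V)\otimes A'$ and $f'\in I(X,\omega\otimes A')$. So it suffices to show $\hat\alpha(\Gamma(X,\pi,\omega))$ also satisfies this equation. Applying $\hat\alpha$ to the functional equation over $A$ and using the first identity twice, this reduces to the compatibility $\alpha\circ M_X(f)=M_{X}(\alpha\circ f)$ of the intertwining operator with base change. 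Tracing through Section~\ref{sectionintertwining}, $M_X=\widehat R^{-1}\overline M_X$ where $\overline M_X$ is built from $\Psi\circ R$ and the twist $f\mapsto(g\mapsto f(w_0g))$: the map $\Psi$ is an integral over $\overline N$ of a compactly-supported function (same base-change-of-integral argument as above), $R$ is a product of operators $a_w-\theta_w(a_w)$ whose coefficients lie in $A[X,X^{-1}]$ and are compatible with $\alpha$, and $\widehat R$ maps to the corresponding element $\widehat{R'}$ over $A'$ — here one needs that the $a_w$ and the inequalities \eqref{waldint} used to define them are independent of the coefficient ring, which holds because they come from the group-theoretic geometric-Lemma filtration. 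One subtlety: $\widehat R\in S$ must map under $\hat\alpha$ into the multiplicative set $S'\subseteq A'[X,X^{-1}]$, which holds since $\alpha$ carries units to units, so $\widehat R^{-1}$ base-changes correctly inside $S^{-1}A[X,X^{-1}]$. Since it is enough to test the functional equation on the subspace of elements $\varphi'\otimes f'$ that arise as images of $\alpha$ applied to $A$-data? — no, one needs all $A'$-data, but both sides are $A'[X,X^{-1}]$-linear and $S'^{-1}A'[X,X^{-1}]$-valued, and $\hat\alpha(\Gamma(X,\pi,\omega))$ is a fixed element, so the argument in the proof of Proposition~\ref{exgamma} (reducing from $I$ to $I^{(0)}$, then picking $\alpha_0$ with $Z(X,\alpha_0)=1$) applies verbatim over $A'$, forcing $\hat\alpha(\Gamma(X,\pi,\omega))=\Gamma(X,\pi\otimes A',\omega\otimes A')$ by uniqueness.

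The main obstacle I expect is the bookkeeping in the second identity: verifying that \emph{every} ingredient of $M_X$ — the normalizing product $\widehat R$, the choice of the elements $a_w\in A_M$, the finite-support claims for $\Psi$ and for $\Psi\circ R$, and the twist by $w_0$ — is manifestly insensitive to extending scalars along $\alpha$, and in particular that the element $\widehat R\in S$ does not degenerate (its leading/trailing coefficients stay units) after applying $\hat\alpha$. Once that compatibility $\alpha\circ M_X=M_X\circ(\alpha\circ-)$ is nailed down, the rest is a formal consequence of the uniqueness clause in Theorem~\ref{functionaltheorem} and the first identity.
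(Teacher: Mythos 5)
The paper states this proposition without a proof, so there is no argument to compare yours against; the proposition is evidently regarded as a routine verification. Your proof is essentially complete and follows the natural route. For the first identity, the decisive points are all present: each truncated integral $Z_N(X,\varphi,f)$ is a finite $A[X,X^{-1}]$-linear combination of values of $f(g,1)\varphi(g)$ (Vign\'eras's Haar measure being $A$-valued and compatible with extension of scalars); the stabilization index $N_j$ for the $j$-th coefficient depends only on the purely group-theoretic data of Proposition \ref{delprop} and the support structure of $f$, hence is insensitive to $\alpha$; and $\alpha\circ f$, $\alpha\circ\varphi$ remain members of $I(X,\omega\otimes A')$ and $M(\pi\otimes A')$ respectively (here one uses that $\widetilde{V}\otimes_A A'$ maps to $\widetilde{V\otimes_A A'}$, which suffices even though that map need not be surjective). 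For the second identity, your reduction to the compatibility $\alpha\circ M_X = M_X\circ(\alpha\circ -)$ is exactly right: $\overline{M}_X = \Psi\circ R$ is built from the element $R$, which lies in $A[X,X^{-1}][A_M]$ with group-theoretically determined $a_w$, together with an integral over $\overline{N}$ and the $w_0$-twist, all of which visibly commute with $\alpha$; and $\widehat{R}\in S$ maps under $\hat\alpha$ to the corresponding element of $S'$ since units go to units. The concluding step is cleanest if phrased as you do at the end: take the element $\alpha_0=\varphi_0\otimes f_0$ of Proposition \ref{exone}, constructed from $A$-data with $Z(X,\alpha_0)=1$; its image over $A'$ still has $Z'=1$, and applying $\hat\alpha$ to the functional equation evaluated at $\alpha_0$ together with the first identity and the $M_X$-compatibility gives $\hat\alpha(\Gamma(X,\pi,\omega))=Z'(X^{-1},\varphi_0\otimes 1,M_X(f_0\otimes 1))=\Gamma(X,\pi\otimes A',\omega\otimes A')$ by the uniqueness clause of Theorem \ref{functionaltheorem} over $A'$, which is available precisely because the proposition assumes $V\otimes_A A'$ satisfies the hypotheses of that theorem. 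No gaps.
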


In the next subsection we will prove the functional equation using the above result for a class of representations that will play a role in the local Langlands correspondence for families for quasi-split classical groups.
\subsection{residually Co-Whittaker modules}
From now on we will assume that $A$ is a $W(k)$-algebra where $W(k)$ is the ring of Witt vectors of some algebraically closed field of characteristic $\ell\not=p$. Moreover, for this section we will only consider $G$ which are unramified. Hence we can choose a hyperspecial subgroup $K_x$ of $G$ with reductive quotient $G_x$. Let $B_x$ be a Borel subgroup of $G_x$ with unipotent radical $U_x$ and $\psi\colon U_x\to W(k)^\times$ a generic character. We set $K'$ to be the preimage of $U_x$ under the projection $K_x\to G_x$ and thus can view $\psi$ as a character of $K'$. Note that $K'$ is an open compact pro-$p$ subgroup of $G$. For the rest of this section we fix a triple $(K_x,U_x,\psi)$ as above. Let $W$ be the $W(k)[G]$-module $\cInd_{K'}^G(\psi)$. We have the following theorem whose proof will appear in forthcoming work of Dat, Helm, Kurinczuk and Moss.
\begin{theorem}\label{psigentheo}
The ring $E=\operatorname{End}_{W(k)[G]}(W)$ is a reduced, commutative, flat, finitely generated $W(k)$-algebra, and $W$ is an admissible $E[G]$-module. Moreover, $W$ is projective as a $W(k)[G]$-module.
\end{theorem}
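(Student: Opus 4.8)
The plan is to reduce every assertion to an explicit identification of $E$ as a Hecke algebra and then to settle the ring-theoretic properties one at a time, with the commutativity of $E$ as the technical core.

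I would begin with the two ``formal'' facts. Write $K_x^{+}=\ker(K_x\to G_x)$, a pro-$p$ group, so $K'/K_x^{+}\cong U_x$. Since $p$ is invertible in $W(k)$, the smooth character $\psi$ is a \emph{projective} object of $\Rep^{\infty}_{W(k)}(K')$: the central idempotent $e_\psi=|U_x|^{-1}\sum_{u\in U_x}\psi(u)^{-1}u$ realises $\psi$ as a direct summand of $\cInd_{K_x^{+}}^{K'}(\mathbf 1)$, and the trivial representation of the pro-$p$ group $K_x^{+}$ is projective over $W(k)[K_x^{+}]$. As $\cInd_{K'}^{G}$ is left adjoint to the exact restriction functor it preserves projectives, so $W=\cInd_{K'}^{G}(\psi)$ is projective (and, being generated by the single vector $\psi\cdot\mathbf 1_{K'}$, also $G$-finite) over $W(k)[G]$ --- this is the last assertion. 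Frobenius reciprocity together with the Mackey formula then identifies $E=\End_{W(k)[G]}(W)$ with the convolution algebra $\mathcal H(G,K',\psi)$ of compactly supported $f\colon G\to W(k)$ satisfying $f(k_1 g k_2)=\psi(k_1)\psi(k_2)f(g)$; this is a free $W(k)$-module on the $\psi$-averaged characteristic functions of the $K'$-double cosets compatible with $\psi$, so in particular flat over $W(k)$.

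For commutativity I would use the Gelfand--Graev trick: exhibit an anti-automorphism $\iota$ of $G$ that stabilises $K'$, satisfies $\psi\circ\iota=\psi$, and fixes every $K'$-double coset; then $f\mapsto(g\mapsto f(\iota(g)))$ is an anti-automorphism of $\mathcal H(G,K',\psi)$ equal to the identity map, which forces the algebra to be commutative. Since $G$ is unramified, $\iota$ should be assembled from the Chevalley involution attached to the hyperspecial vertex $x$ and to the pinning cutting out $(B_x,U_x)$ --- for split $G$, essentially $g\mapsto w_0\,{}^{t}g\,w_0^{-1}$, adjusted by an inner automorphism so as to fix $\psi$ --- and the point to check is that it preserves each double coset in $K'\backslash G/K'$. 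This last verification, where one genuinely uses the combinatorics of the building and the Iwahori--Matsumoto structure of the affine Hecke algebra, I expect to be the main obstacle; a more robust alternative is to establish a ``$\psi$-Satake'' presentation that identifies $\mathcal H(G,K',\psi)$ outright with an explicit commutative ring.

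It then remains to see $E$ is reduced and finitely generated over $W(k)$, and that $W$ is $E[G]$-admissible. As $E$ is free over the DVR $W(k)$ it embeds into $E\otimes_{W(k)}K$, hence into $\End_{K[G]}(W\otimes_{W(k)}K)$ with $K=\Frac(W(k))$ of characteristic zero; over $K$ the module $W\otimes K$ is a Gelfand--Graev type representation supported on the finitely many generic depth-zero Bernstein blocks of $G$, and since the Whittaker vectors of a generic block are free of rank one over its Bernstein centre, $\End_{K[G]}(W\otimes K)$ is the (reduced) product of those centres; hence $E$ is reduced. Finite generation of $E=\mathcal H(G,K',\psi)$ as a $W(k)$-algebra I would instead read off from an explicit Bernstein/Satake-type presentation of the $\psi$-twisted Hecke algebra by finitely many generators. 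Finally, with $E$ now Noetherian, $W$ cyclic over $W(k)[G]$ and $W$ flat over $E$, the $E[G]$-admissibility of $W$ --- finite generation of $W^{K''}$ over $E$ for each compact open $K''\subseteq G$ --- follows from the finiteness theory of smooth $E[G]$-modules over a Noetherian ring, i.e. the analogue for our classical groups of Helm's argument for $\GL_n$ via the Bernstein decomposition and second adjunction. As indicated, the decisive step is the construction of the involution $\iota$ (equivalently, the presentation of $\mathcal H(G,K',\psi)$).
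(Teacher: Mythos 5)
There is nothing in the paper to compare your proposal against: Theorem \ref{psigentheo} is stated without proof, and the text explicitly defers it to forthcoming work of Dat, Helm, Kurinczuk and Moss. So the only question is whether your outline stands on its own, and it does not yet. The formal parts are fine: $K'$ is pro-$p$ and $p$ is invertible in $W(k)$, so $\psi$ is projective in smooth $W(k)[K']$-modules (you do not even need the detour through $K_x^{+}$ and $e_\psi$; the averaging idempotent over $K'/\ker\psi$ does it directly), hence $W=\cInd_{K'}^{G}(\psi)$ is projective and cyclic, and $E\cong\mathcal H(G,K',\psi)$ is free, hence flat, over $W(k)$. But every remaining assertion of the theorem is reduced in your sketch to a statement you do not establish: commutativity rests on the existence of an anti-involution $\iota$ with $\psi\circ\iota=\psi$ that fixes \emph{every} $\psi$-compatible double coset in $K'\backslash G/K'$, and you yourself flag this verification as the main obstacle --- for general unramified classical groups this is a genuine theorem (of Gelfand--Graev/Chan--Savin type), not a routine check, and without it the whole ring-theoretic analysis collapses. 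Reducedness as you set it up needs not just the embedding $E\hookrightarrow E\otimes_{W(k)}K$ but the identification of $\End_{K[G]}(W\otimes K)$ with a product of Bernstein centres of generic blocks, i.e. that the depth-zero Whittaker spaces are free of rank one over those centres; that is again a substantive input. Finite generation over $W(k)$ is deferred to an unspecified ``Bernstein/Satake-type presentation,'' and $E[G]$-admissibility of $W$ is deferred to ``the analogue for classical groups of Helm's argument,'' which is precisely the finiteness theory (Dat's second adjunction and noetherianity results) that the cited forthcoming work is meant to supply.

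In short, your proposal is a reasonable road map --- it correctly identifies $E$ as a twisted depth-zero Hecke algebra and isolates the right intermediate claims --- but each of the four nontrivial conclusions (commutative, reduced, finitely generated, admissible) is asserted via a black box rather than proved. To turn this into a proof you would need, at minimum, the construction and double-coset-fixing property of $\iota$ (or the explicit presentation of $\mathcal H(G,K',\psi)$ you mention as the ``robust alternative''), and a precise reference or argument for the characteristic-zero block decomposition and for the finiteness theorems over noetherian coefficients.
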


\begin{definition}
For a $W(k)$-algebra $A$ we call an $A[G]$-module $(V,\pi)$ residually co-Whittaker of type $(K_x,U_x,\psi)$, if:
\begin{enumerate}
	\item $(V,\pi)$ is an admissible $A[G]$-module,
	\item The map $W\otimes_{W(k)}\Hom_{W(k)[G]}(W,V)\to V$ is surjective, and
	\item $\Hom_{W(k)[G]}(W,V)$ is free of rank one over $A$.
\end{enumerate}
\end{definition}
Theorem \ref{psigentheo} implies that in particular $W$ is a residually co-Whittaker $E[G]$-module of type $(K_x,U_x,\psi)$. 
Residually co-Whittaker modules are a depth-zero analog to co-Whittaker modules (as introduced in \cite{helm2016whittaker}) and should play a similar role in the Local Langlands conjecture in families for classical groups as co-Whittaker modules played in the Local Langlands correspondence in families for $\GL_n$. The class of residually co-Whittaker is stable under base change and satisfies Schur's Lemma.\\
The module $W$ is a universal object in the class of residually co-Whittaker modules of type $(K_x,U_x,\psi)$. Namely, for two residually co-Whittaker $A[G]$-modules $V$ and $V'$ in this class, we say that $V$ dominates $V'$ if  there exists a surjection of $V$ onto $V'$ which induces an isomorphism between $\Hom_{W(k)[G]}(W,V)$ and $\Hom_{W(k)[G]}(W,V')$. This gives rise to an equivalence relation, where $V$ is equivalent to $V'$ if there exists a residually co-Whittaker $A[G]$-module $V''$ such that $V''$ dominates both $V$ and $V'$. Then $W$ is universal for residually co-Whittaker $A[G]$-modules in the sense that for any residually co-Whittaker module $V$, the action of $E$ on $\Hom_{W(k)[G]}(W,V)$ yields a map from $E$ to $A$ and $W\otimes_EA$ dominates $V$.\\
We now prove that the class of residually co-Whittaker modules satisfies the hypothesis of the main results of this paper.
 \begin{proposition}
 Let $(V,\pi)$ be a residually co-Whittaker $A[G]$-module of type $(K_x,U_x,\psi)$. Then $(V,\pi)$ satisfies the assumptions of Theorem \ref{theorat} and \ref{functionaltheorem}.
 \end{proposition}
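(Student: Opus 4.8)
The plan is to verify the three hypotheses that enter Theorems \ref{theorat} and \ref{functionaltheorem} one at a time, exploiting the universality of $W$ and the structural properties recorded in Theorem \ref{psigentheo}. First I would check that $(V,\pi)$ is admissible and $G$-finite: admissibility is part of the definition of a residually co-Whittaker module, while $G$-finiteness follows because $W$ is $G$-finite (being $\cInd_{K'}^G(\psi)$ with $K'$ open compact, it is generated over $A[G]$ by the characteristic function of $K'$), and by the surjectivity condition (2) the finitely many images of a generating set of $W$ under a basis of the rank-one module $\Hom_{W(k)[G]}(W,V)$ generate $V$ over $A[G]$. For the Jacquet-module condition in Theorem \ref{theorat}, since we have restricted to unramified $G$ and these are in particular quasi-split connected classical groups (or handled via the neutral-component remark), Dat's theorem (Corollary 1.6 i) of \cite{dat2009finitude}) already guarantees that $r^G_{P'}$ preserves admissibility for all parabolic $P'$; so this hypothesis is automatic and does not even need the residually co-Whittaker structure.

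Next I would treat the two conditions of Theorem \ref{functionaltheorem}: surjectivity of the trace map $\Phi\colon V\otimes_A\widetilde V\to A$ and $\Hom_G(V\otimes\widetilde V,A)\cong A$. For the latter, the paper has already noted that the class of residually co-Whittaker modules ``satisfies Schur's Lemma'', i.e.\ $\End_{A[G]}(V)\cong A$; combined with the fact (to be used) that $V$ is admissible over a noetherian ring so that the natural map $V\otimes_A\widetilde V\to \Hom_{A[G]}(\ ,\ )$-type adjunctions apply, one gets $\Hom_G(V\otimes\widetilde V,A)\cong\Hom_G(V,\widetilde{\widetilde V})$. The point is then that $\widetilde{\widetilde V}\cong V$, which I would deduce by base change: $V$ is dominated by $W\otimes_E A$ and $W$ is $W(k)$-projective, hence flat, so reflexivity can be checked after a faithfully flat descent or directly from the fact that $W^{K''}$ is free of finite rank over $W(k)$ for each open compact $K''$ (admissibility plus $W(k)$-flatness forces $W^{K''}$ to be finite projective, hence free, and double duals of finite free modules are canonical). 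Transporting this along the domination surjection $W\otimes_E A\twoheadrightarrow V$ and using that the relevant $\Hom$-spaces are computed level by level, one obtains $\widetilde{\widetilde V}\cong V$ and therefore $\Hom_G(V\otimes\widetilde V,A)\cong\Hom_G(V,V)\cong A$.

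For the surjectivity of $\Phi$, I would argue that it suffices to prove it for the universal module $W\otimes_E A$ and then push forward along the domination surjection: if $W\otimes_E A$ surjects onto $V$, the induced map on contragredients is injective, and a short diagram chase shows that surjectivity of the trace pairing is inherited by the quotient. For $W\otimes_E A$ itself, surjectivity of $\Phi$ amounts to the pairing $W\otimes_E \widetilde W\to E$ being surjective (then base change along $E\to A$); and because $W=\cInd_{K'}^G(\psi)$, the characteristic function $e_{K'}$ of $K'$ (suitably normalized, using that $\mu_G(K')$ is invertible since $K'$ is pro-$p$ and $A$ is a $\ZZ[1/p]$-algebra) pairs to a unit with the evaluation-at-identity functional in $\widetilde W$, so $\Phi$ hits a unit and is surjective.

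The main obstacle I anticipate is the reflexivity step $\widetilde{\widetilde V}\cong V$ over a general noetherian $W(k)$-algebra $A$: unlike over a field (the Example in the text), double duality is not automatic, and one genuinely has to use the $W(k)$-projectivity of $W$ together with admissibility to reduce to finite free modules over $W(k)$ and then descend. Once that is in place, the remaining verifications are the short formal arguments sketched above, and the proposition follows by invoking Theorems \ref{theorat} and \ref{functionaltheorem}.
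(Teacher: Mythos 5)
Your verification of admissibility, $G$-finiteness, and the Jacquet-module condition matches the paper's argument (the paper likewise gets $G$-finiteness from the cyclicity of $W$ plus condition (2), and relies on Dat for the Jacquet modules). The problems are in the two hypotheses of Theorem \ref{functionaltheorem}, where your route diverges from the paper's and, as written, has genuine gaps.

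First, the reflexivity step. You reduce $\Hom_G(V\otimes\widetilde V,A)\cong A$ to $\widetilde{\widetilde V}\cong V$ plus Schur, and you correctly flag this as the main obstacle, but the proposed fix does not work: $W(k)$-projectivity of $W$ and freeness of $W^{K''}$ over $W(k)$ say nothing about $V$, which is only a \emph{quotient} of $W\otimes_EA$, and quotients destroy reflexivity. Over a general noetherian $A$ the invariants $V^{K''}$ are merely finitely generated $A$-modules, and $\Hom_A(\Hom_A(M,A),A)\cong M$ fails already for torsion modules; the paper's own Example makes clear that reflexivity is only being used when $A$ is a field. The paper avoids this entirely: it uses the surjection $W_A\twoheadrightarrow V$ to \emph{embed} $\Hom_{A[G\times G]}(V\otimes_A\widetilde V,A)$ into $\Hom_{A[G\times G]}(W_A\otimes_A\widetilde V,A)$, computes the latter by Frobenius reciprocity for $W_A=\cInd_{K'}^G(\psi\otimes A)$ to be $(\widetilde{\widetilde V})^{K',\psi}\cong\Hom_A(\Hom_A(V^{K',\psi},A),A)$, and only needs the double dual of the \emph{free rank-one} module $V^{K',\psi}$ — not reflexivity of $V$. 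It then checks that $1\mapsto\Phi\mapsto 1$ along the chain $A\hookrightarrow\Hom(V\otimes\widetilde V,A)\hookrightarrow\Hom(W_A\otimes\widetilde V,A)\cong A$, forcing the middle term to be free of rank one.

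Second, the surjectivity of $\Phi$. Your claim that surjectivity of the trace pairing for $W\otimes_EA$ pushes forward along the domination surjection is false as stated: a smooth functional $\mu\in\widetilde{W_A}$ with $\mu(u)=1$ need not kill the kernel of $W_A\twoheadrightarrow V$, so it does not descend to an element of $\widetilde V$; the induced map on contragredients goes the wrong way ($\widetilde V\hookrightarrow\widetilde{W_A}$) for the diagram chase you describe. The paper instead works directly on $V$: condition (3) gives $V^{K',\psi}\cong\Hom_{W(k)[G]}(W,V)\cong A$, the idempotent $e_{K',\psi}$ (well defined since $K'$ is pro-$p$ and $p$ is invertible) projects $V$ onto this line, and composing a generator of $\Hom_A(V^{K',\psi},A)$ with $e_{K',\psi}$ produces a \emph{smooth} surjective functional $\lambda$, whence $\Phi(v_0\otimes\lambda)=1$. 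You should replace both steps with arguments of this kind; as they stand they do not close.
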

\begin{proof}
By definition $(V,\pi)$ is admissible, so to show that the assumptions of Theorem \ref{theorat} hold in this case, it remains to prove that $(V,\pi)$ is $G$-finite. However, this follows since the element $w_0\in W$ defined by
$$w_0(g)=\begin{cases*}
\psi(g)&$g\in K'$,\\
0 &\text{else,}
\end{cases*}$$
generates $W$ as an $W(k)[G]$-module and the map $W\otimes_{W(k)}\Hom_{W(k)[G]}(W,V)\to V$ is surjective.\\
We proceed by showing that $(V,\pi)$ satisfies the assumptions of Theorem \ref{functionaltheorem}. First we will prove that the canonical trace map $\Phi\colon V\otimes_A\widetilde{V}\to A$ is surjective. Note that by Frobenius reciprocity we obtain
$$\Hom_{W(k)[G]}(W,V)\cong\Hom_{W(k)[K']}(\psi,V)\cong V^{K',\psi},$$
where $V^{K',\psi}=\{v\in V\mid\pi(k)v=\psi(k)v\text{ for all }k\in K'\}$. In particular since by assumption $\Hom_{W(k)[G]}(W,V)$ is a free rank one $A$-module the same holds for $V^{K',\psi}$. Consider now the projection morphism $e_{K',\psi}(v)=\mu(K')^{-1}\int_{K'}\psi(k)^{-1}kvdk$ which maps $V$ onto $V^{K',\psi}$. Since $V^{K',\psi}$ is a free $A$-module we can choose a surjective morphism in $\Hom_A(V^{K',\psi},A)$ and precomposing with $e_{K',\psi}$ yields a surjective element $\lambda$ of $\Hom_A(V,A)$. Now any compact open subgroup on which $\psi$ is trivial also stabilizes $\lambda$ and we obtain that $\lambda\in\widetilde{V}$.\\
It remains to show that the $A$-module $$\Hom_{A[G\times G]}(V\otimes_A\widetilde{V},A)\cong\Hom_{A[G]}(V,\widetilde{\widetilde{V}})$$ is free of rank one. We fix a basis element $v_0$ of $V^{K',\psi}$. Moreover, let $W_A\coloneqq\cInd_{K'}^G(\psi)\otimes A=\cInd_{K'}^G(\psi\otimes A)$ and by abuse of notation we denote the element $w_0\otimes 1$ by $w_0$. By assumption the map $W_A\to V$ defined by $g\cdot w_0\otimes 1\mapsto \pi(g)v_0$ is surjective. 
As we proved above $\Phi$ is surjective and hence $A\Phi$ is a free rank one submodule of $\Hom_{A[G\times G]}(V\otimes_A\widetilde{V},A)$. The surjection $W_A\to V$ mentioned above yields an embedding
\begin{equation}\label{eq:embpsi}\Hom_{A[G\times G]}(V\otimes_A\widetilde{V},A)\hookrightarrow\Hom_{A[G\times G]}(W_A\otimes_A\widetilde{V},A).\end{equation}
We can compute the latter 
\begin{align*}
\Hom_{A[G\times G]}(W_A\otimes_A\widetilde{V},A)&\cong\Hom_{A[G]}\left(W_A,\widetilde{\widetilde{V}}\right)\\
&\cong\Hom_{A[K']}\left(\psi\otimes A,\widetilde{\widetilde{V}}\right)\\
&\cong\left(\widetilde{\widetilde{V}}\right)^{K',\psi}.
\end{align*}
For any smooth $A[G]$-module $U$ and $\theta\in\widetilde{U}$ we have that $(e_{K',\psi}\theta)(u)=\theta(e_{K',\psi^{-1}}u)$ and hence $\left(\widetilde{U}\right)^{K',\psi}=\Hom_A(U^{K',\psi^{-1}},A)$. We obtain
$$\left(\widetilde{\widetilde{V}}\right)^{K',\psi}\cong\Hom_A(\widetilde{V}^{K',\psi^{-1}},A)\cong\Hom_A(\Hom_A(V^{K',\psi},A),A)$$
which is isomorphic to $A$ since $V^{K',\psi}\cong A$. Hence we have the following chain of morphisms
$$A\hookrightarrow\Hom_{A[G\times G]}(V\otimes_A\widetilde{V},A)\hookrightarrow\Hom_{A[G\times G]}(W_A\otimes_A\widetilde{V},A)\cong A.$$
We will show that $1$ gets sent to $1$ under the above chain of morphisms which implies that $\Hom_{A[G\times G]}(V\otimes_A\widetilde{V},A)$ is free of rank one.\\
Note that $\widetilde{V}^{K',\psi^{-1}}$ is isomorphic to $A$ and a basis element is given by $\tilde{v_0}$ which is characterized by the property that $\tilde{v_0}(v_0)=1$ and that it vanishes on $(1-e_{K',\psi})V$. Analogously we have a basis element $\tilde{\tilde{v_0}}$ of $\left(\widetilde{\widetilde{V}}\right)^{K',\psi}$ which satisfies $\tilde{\tilde{v_0}}(\tilde{v_0})=1$ and vanishes on $(1-e_{K',\psi^{-1}})\widetilde{V}$. Under the isomorphism $$\left(\widetilde{\widetilde{V}}\right)^{K',\psi}\cong\Hom_{A[G\times G]}(W_A\otimes_A\widetilde{V},A)$$ we have that $\tilde{\tilde{v_0}}$ maps to the element that is defined by $g w_0\otimes\lambda\mapsto\tilde{\tilde{v_0}}(g^{-1}\lambda)$.
Under ($\ref{eq:embpsi}$) the trace map $\Phi$ gets sent to the map in $\Hom_{A[G\times G]}(W_A\otimes_A\widetilde{V},A)$ which is defined by $g\cdot w_0\otimes\lambda\mapsto\lambda(\pi(g)v_0)$. For all $g\in G$ and $\lambda\in\widetilde{V}$ we can write $\widetilde{\pi}(g^{-1})\lambda=a\tilde{v_0}+r$ for some $a\in A$ and $r\in(1-e_{K',\psi^{-1}})\widetilde{V}$. Hence we obtain that
$$\lambda(\pi(g)v_0)=(\widetilde{\pi}(g^{-1})\lambda)(v_0)=a\tilde{v_0}(v_0)=a=\tilde{\tilde{v_0}}(a\tilde{v_0})=\tilde{\tilde{v_0}}(\widetilde{\pi}(g^{-1})\lambda),$$
which shows that $\Phi$ and $\tilde{\tilde{v_0}}$ get mapped to the same element in $\Hom_{A[G\times G]}(W_A\otimes_A\widetilde{V},A)$. This finishes the proof.
\end{proof}
\section{Normalizing Factor}

The construction of the gamma factor is unfortunately not complete. We need to include a certain normalizing factor for the intertwining operator (in particular such that the intertwining operator does not depend on a choice of a measure on the unipotent radical $N$ of $P$). Since $W(k)$ contains enough $p$-power roots of unity we can choose a nontrivial smooth additive character $\psi\colon F\to W(k)^\times$. We follow the article \cite{karel1979functional} by Karel.\\
We exclude the odd orthogonal case (see Section 6 of \cite{lapid2005local} for the necessary changes). Let $\mathfrak g^\Box$ be the Lie algebra of $G^\Box$ which we identify with
$$\{X\in\End(W^{\Box})\mid h^\Box(w,w'X)+h^\Box(wX,w')=0\text{ for all } w,w' \in W^\Box\}.$$
Moreover, we denote the Lie algebra of $\overline{N}$, the unipotent radical of the parabolic $\overline{P}$ opposite to $P$, by $\overline{\mathfrak n}$. Under the above identification $\overline{\mathfrak n}$ corresponds to 
$$\{X\in\mathfrak g^\Box\mid\ker(X)\supseteq W^\bigtriangledown\supseteq \operatorname{Im}(X)\}.$$ 
This induces an isomorphism $\overline{N}\to\overline{\mathfrak n}$ given by $u\mapsto u-I$. If we are not in the odd orthogonal case we can choose an element $B$ of the Lie algebra $\mathfrak n$ of the unipotent radical $N$ of $P$ which has maximal rank. For any such $B$ we define a character $\psi_B$ of $\overline{\mathfrak n}$ by
$$\psi_B(X)=\psi(\operatorname{Trd}_{W^\Box}(XB))$$
for $X\in\overline{\mathfrak n}$ where the product $XB$ is taken in $\End_D(W^\Box)$. Via the isomorphism mentioned above this also defines a character of $\overline{N}$. The map $w\mapsto (w,w)$ allows us to identify $W$ with $W^\bigtriangleup$ and similarly $W^\bigtriangledown$ with $W$ via $(w,-w)\mapsto w$. Since any $B\in\mathfrak n$ induces a linear map from $W^\bigtriangledown$ to $W^\bigtriangleup$, via this identifications we can make sense of $\operatorname{Nrd}_W(B)$.

We have a filtration on $I(X,\omega)=i_{P}^{G^\Box}(\omega_X\circ\Delta)$ given by the Bruhat decomposition $G^\Box=\coprod_{w\in^PW^{\overline{P}}}Pw\overline{P}$, which we already described in Section \ref{sectionintertwining}. The bottom element consists of those functions which have support in the open orbit $P\overline{P}$ which we denote by $I_1$. For $w\in ^PW^{\overline{P}},w\not=1$ we denote by $I_w$ the quotient of those elements supported in $\bigcup_{w'\leq w}Pw\overline{P}$ modulo those functions which are supported in $\bigcup_{w'<w}Pw\overline{P}$. Then $I_w$ is isomorphic to the smooth vectors in the $\overline{P}$-representation
$$\left\{f\colon Pw\overline{P}\to A[X,X^{-1}]\mid f(sx)=\omega_X(\Delta(s))f(x)\text{ for all }s\in P,x\in Pw\overline{P}\text{ and }P\backslash\operatorname{supp}(f)\text{ is compact}\right\}.$$

\begin{proposition}
There is a natural isomorphism 
\begin{equation}\label{keyiso}
(I_1)_{\overline{N},\psi_B^{-1}}\cong I(X,\omega)_{\overline{N},\psi_B^{-1}},
\end{equation}
which is induced by the inclusion $I_1\hookrightarrow I(X,\omega)$.
\end{proposition}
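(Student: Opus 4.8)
The plan is a dévissage along the Bruhat filtration of $I(X,\omega)$: I would show that the twisted coinvariants functor $V\mapsto V_{\overline N,\psi_B^{-1}}$ is exact and kills every subquotient attached to a cell $Pw\overline P$ with $w\neq 1$, so that the inclusion $I_1\hookrightarrow I(X,\omega)$ becomes an isomorphism upon applying it. For exactness, note that $\overline N$ is an increasing union of compact open pro-$p$ subgroups, each of which has volume a power of $p$ and hence a unit in $A$; consequently the Bernstein--Zelevinsky criterion survives in this setting — $v\in V(\overline N,\psi_B^{-1})$ if and only if $\int_{\overline N_0}\psi_B(\overline n)\,\pi(\overline n)v\,d\overline n=0$ for some compact open $\overline N_0\subseteq\overline N$ — and this makes $V\mapsto V_{\overline N,\psi_B^{-1}}$ exact just as over $\mathbb C$.

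Now the Bruhat decomposition $G^\Box=\coprod_{w\in{}^PW^{\overline P}}Pw\overline P$ gives the finite filtration of $I(X,\omega)$ recalled just above the proposition, with bottom term $I_1$ and successive subquotients the $\overline P$-modules $I_w$, $w\neq 1$. Feeding $0\to I_1\to I(X,\omega)\to I(X,\omega)/I_1\to 0$ and the induced filtration of $I(X,\omega)/I_1$ through the exact functor above, the statement reduces to $(I_w)_{\overline N,\psi_B^{-1}}=0$ for all $w\neq 1$. Choosing the representatives $w$ so that $L_w:=\overline P\cap w^{-1}Pw$ respects the Levi decomposition $\overline P=M\ltimes\overline N$, one has $L_w=\overline L_w\ltimes U_w$ with $\overline L_w\subseteq M$ and $U_w=L_w\cap\overline N$, and the description of $I_w$ quoted before the statement identifies it with $\cInd_{L_w}^{\overline P}(\chi_w)$, where $\chi_w$ is the character transported from $\delta_P^{1/2}(\omega_X\circ\Delta)$. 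Applying the geometric-lemma description of twisted coinvariants of a compact induction — equivalently, the Bernstein--Zelevinsky sheaf on $L_w\backslash\overline P/\overline N\cong\overline L_w\backslash M$ whose stalk at $m_0$ computes the $\psi_B^{-1}$-twisted coinvariants of $\chi_w$ over the stabilizer $m_0^{-1}U_wm_0$ — and using that $\delta_P$ and $\Delta=\operatorname{Nrd}_{W^\bigtriangleup}$ are trivial on unipotent elements (so $\chi_w|_{U_w}=1$ and every stalk is a twisted-coinvariant space of the trivial character of $m_0^{-1}U_wm_0$), one sees that $(I_w)_{\overline N,\psi_B^{-1}}=0$ as soon as $\psi_B$ is nontrivial on $m_0^{-1}U_wm_0$ for every $m_0\in M$.

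The remaining point, which I expect to be the only substantive one, is precisely this genericity statement: for every $w\neq 1$ and every $m_0\in M$ the character $\psi_B$ is nontrivial on $m_0^{-1}U_wm_0$ — in particular $U_w\neq 1$ for $w\neq1$, which is part of the standard analysis of the $(P,\overline P)$-double cosets (it encodes that $Pw\overline P$ is not the open cell). Identifying $\overline{\mathfrak n}$ with $\Hom_D(W^\bigtriangledown,W^\bigtriangleup)$ and $B$ with a maximal-rank element of $\mathfrak n\cong\Hom_D(W^\bigtriangleup,W^\bigtriangledown)$, triviality of $\psi_B$ on $m_0^{-1}U_wm_0$ says that the Lie algebra $\overline{\mathfrak u}_w$ of $U_w$ is annihilated by the linear form $X\mapsto\operatorname{Trd}_{W^\Box}(X\cdot m_0Bm_0^{-1})$; since $m_0Bm_0^{-1}$ is invertible and the reduced trace pairing on $\End_D(W^\bigtriangleup)$ is nondegenerate, this would force $\overline{\mathfrak u}_w$ to be too small to correspond to a nontrivial double coset. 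I would settle this last step by the same root-theoretic computation in $G^\Box$ carried out by Karel \cite{karel1979functional} and by Lapid--Rallis \cite{lapid2005local}, in which the hypotheses that $B\in\mathfrak n$ has maximal rank and that we are not in the odd orthogonal case are exactly what is used. Granting it, each $I_w$ with $w\neq 1$ has vanishing $(\overline N,\psi_B^{-1})$-coinvariants, and the two exact sequences above yield the natural isomorphism $(I_1)_{\overline N,\psi_B^{-1}}\xrightarrow{\ \sim\ }I(X,\omega)_{\overline N,\psi_B^{-1}}$ induced by $I_1\hookrightarrow I(X,\omega)$.
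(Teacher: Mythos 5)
Your proposal follows the same two-step strategy as the paper: exactness of the twisted coinvariants functor (coming from $\overline{N}$ being a union of pro-$p$ compact opens with unit volumes, so that the Jacquet--Langlands/Bernstein--Zelevinsky averaging criterion is available), followed by vanishing of $(I_w)_{\overline{N},\psi_B^{-1}}$ for $w\neq 1$, with the crucial genericity input --- that $B$ of maximal rank forces $\psi_B$ to be nontrivial on $\overline{N}\cap x^{-1}Px$ for $x\notin P\overline{P}$ --- delegated to Karel. Where you diverge is in how you package the vanishing: you invoke the geometric-lemma description of the twisted coinvariants of a compact induction as an $\ell$-sheaf on $\overline{L}_w\backslash M$ and argue stalkwise, whereas the paper works directly with Vign\'eras I.4.11, unwinding the definition of the coinvariants to an explicit integral identity $\int_U\xi(xu)\psi_B(u)\,du=\psi_B(r_x)\,\omega_X(\Delta(xr_xx^{-1}))\int_U\xi(xu)\psi_B(u)\,du$ and cancelling. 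The two routes are morally equivalent, and both use the same observations that $\Delta$ and $\delta_P$ die on unipotents so that the transported character is trivial on $U_w$; your version is more structural, the paper's more elementary and self-contained.

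One point worth making explicit, which your write-up leaves implicit: the stalkwise vanishing over a general coefficient ring $A$ is not automatic from nontriviality of $\psi_B$ alone, the way it is over $\mathbb{C}$. What makes it work is that $\psi_B$ takes values in $p$-power roots of unity in $W(k)^\times$, and for any nontrivial such root $\zeta$ the element $1-\zeta$ is a unit in $W(k)$ because $\ell\neq p$. This is precisely the last line of the paper's proof, and it is the reason the argument survives base change to an arbitrary noetherian $W(k)$-algebra. You should state this when asserting that ``every stalk\dots vanishes as soon as $\psi_B$ is nontrivial,'' as otherwise a reader may worry that the vanishing is only a characteristic-zero phenomenon. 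A minor slip: you identify $\overline{\mathfrak{n}}$ with $\Hom_D(W^\bigtriangledown,W^\bigtriangleup)$ and $\mathfrak{n}$ with $\Hom_D(W^\bigtriangleup,W^\bigtriangledown)$; the paper's normalization is the other way around, but this is immaterial for the nondegeneracy of the trace pairing and hence for the argument.
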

\begin{proof}
 Since $\overline{N}$ is a pro-$p$ group the functor $(\_)_{\overline{N},\psi_{B}^{-1}}$ is exact and hence to show (\ref{keyiso}) it is enough to prove that $(I_w)_{\overline{N},\psi_{B}^{-1}}=0$ for $w\not=1$. By I.4.11 of \cite{vigneras1996representations} this is equivalent to showing that for any $\xi$ in $I_w,w\not=1$ we can find an open compact subgroup $U$ of $\overline{N}$ such that \begin{equation}\label{zerocoin}
\int_U(u\cdot\xi)(g)\psi_B(u)du=0
\end{equation}
for all $g\in Pw\overline{P}$. Since for $u'\in U$ we have that $\int_U(u\cdot\xi)(gu')\psi_B(u)du=\psi_B(u')\int_U(u\cdot\xi)(g)\psi_B(u)du$ it is enough to show Equation (\ref{zerocoin}) for $g\in\operatorname{supp}(\xi)$.\\ 
The condition that $B$ has maximal rank implies that $\psi_B$ is nontrivial on $\overline{N}\cap x^{-1}Px$ for all $x\in G^\Box$ which are not contained in $P\overline{P}$. By Lemmas 2.5 and 2.6 of \cite{karel1979functional} we can find an open compact subgroup $U\subseteq\overline{N}$ such that for each $\xi\in I_w$ and $x\in\operatorname{supp}(\xi)$ we can find an $r_x\in U\cap x^{-1}Px$ such that $\psi_B(r_x)\not=1$.
We compute
$$\int_U\xi(xu)\psi_B(u)du=\int_U\xi(xr_xu)\psi_B(r_xu)du=\psi_B(r_x)\omega_X(\Delta(xr_xx^{-1}))\int_U\xi(xu)\psi_B(u)du$$
for $x\in\operatorname{supp}(\xi)$.
However since $xr_xx^{-1}$ is a unipotent element its projection to $M$ is contained in the derived subgroup of $M$ and thus $\omega_X(\Delta(xr_xx^{-1}))=1$. Since $\psi_B(r_x)$ is a $p$-th power root of unity we see that $1-\psi_B(r_x)$ is invertible in $W(k)$ which shows Equation (\ref{zerocoin}).
\end{proof}

For $f\in I_1$ we define 
$$l_{\psi_B}(f)\coloneqq\int_{\overline{N}}f(u)\psi_B(u)du,$$
and this integral is well-defined since $f\mid_{\overline{N}}$ has compact support (in particular $l_{\psi_B}(f)\in A[X,X^{-1}]$). Note that $l_{\psi_B}$ factors through $(I_1)_{\overline{N},\psi_B^{-1}}$. Hence by composing the canonical projection $I(X,\omega)\to I(X,\omega)_{\overline{N},\psi_{B}^{-1}}$ and the isomorphism of Equation (\ref{keyiso}) with $l_{\psi_B}$ we obtain a map from $I(X,\omega)$ to $A[X,X^{-1}]$ which we also denote by $l_{\psi_B}$.\\
We will now see that the just constructed map satisfies a functional equation.

\begin{proposition}
	There is an element $c(X,\omega,B,\psi)\in S^{-1}(A[X,X^{-1}])$ such that for all $f\in I_1$ we have
	$$l_{\psi_B}(M_X(f))=c(X,\omega,B,\psi)l_{\psi_B}(f).$$
\end{proposition}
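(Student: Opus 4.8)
The plan is to exhibit both $f\mapsto l_{\psi_B}(M_X(f))$ and $f\mapsto l_{\psi_B}(f)$ as $S^{-1}A[X,X^{-1}]$-linear functionals that factor through the twisted coinvariant module $(I_1)_{\overline N,\psi_B^{-1}}$, and then to invoke that this module is free of rank one over $A[X,X^{-1}]$ with $l_{\psi_B}$ generating its $A[X,X^{-1}]$-linear dual; the scalar $c(X,\omega,B,\psi)$ is then forced.

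First I would check that $f\mapsto l_{\psi_B}(M_X(f))$ descends to $I(X,\omega)_{\overline N,\psi_B^{-1}}$. By its very construction, the functional $l_{\psi_B}$ on $I(X^{-1},\overline\omega^{-1})$, and its $S^{-1}A[X,X^{-1}]$-linear extension to $S^{-1}A[X,X^{-1}]\otimes I(X^{-1},\overline\omega^{-1})$ (the target of $M_X$), is built so as to factor through $(\_)_{\overline N,\psi_B^{-1}}$; in particular $l_{\psi_B}(\overline n\cdot g)=\psi_B^{-1}(\overline n)\,l_{\psi_B}(g)$ for $\overline n\in\overline N$. Since $M_X$ is $G^\Box$-equivariant (Section \ref{sectionintertwining}), it intertwines the $\overline N$-actions, so $l_{\psi_B}(M_X(\overline n\cdot f))=\psi_B^{-1}(\overline n)\,l_{\psi_B}(M_X(f))$; hence $l_{\psi_B}\circ M_X$ kills every $\overline n\cdot f-\psi_B^{-1}(\overline n)f$ and therefore factors through $I(X,\omega)_{\overline N,\psi_B^{-1}}$, which by the isomorphism (\ref{keyiso}) is canonically $(I_1)_{\overline N,\psi_B^{-1}}$. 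The functional $l_{\psi_B}$ itself factors through $(I_1)_{\overline N,\psi_B^{-1}}$ by definition.

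Next I would identify $(I_1)_{\overline N,\psi_B^{-1}}$. Restriction to $\overline N$, together with the homeomorphism $P\backslash P\overline P\cong\overline N$ (the big cell), gives an isomorphism of $A[X,X^{-1}][\overline N]$-modules between $I_1$ and $C_c^\infty(\overline N,A[X,X^{-1}])$ with $\overline N$ acting by right translation, under which $l_{\psi_B}$ is carried to $f\mapsto\int_{\overline N}f(u)\psi_B(u)\,du$. The standard computation of twisted coinvariants of a space of compactly supported functions on a group which is an increasing union of its compact open pro-$p$ subgroups — valid here because $p$ is invertible in $A$, hence in $A[X,X^{-1}]$, so that the relevant Haar volumes are units — shows that $\left(C_c^\infty(\overline N,A[X,X^{-1}])\right)_{\overline N,\psi_B^{-1}}$ is free of rank one over $A[X,X^{-1}]$ and that this integration functional induces an isomorphism onto $A[X,X^{-1}]$. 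Thus $l_{\psi_B}$ induces an isomorphism $(I_1)_{\overline N,\psi_B^{-1}}\xrightarrow{\ \sim\ }A[X,X^{-1}]$.

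To conclude, I would choose $f_0\in I_1$ with $l_{\psi_B}(f_0)=1$ (possible since $l_{\psi_B}\colon I_1\to A[X,X^{-1}]$ is onto) and set $c(X,\omega,B,\psi):=l_{\psi_B}(M_X(f_0))\in S^{-1}A[X,X^{-1}]$. For any $f\in I_1$ the element $f-l_{\psi_B}(f)\,f_0$ has vanishing $l_{\psi_B}$, hence maps to $0$ in $(I_1)_{\overline N,\psi_B^{-1}}$ by the injectivity just established; applying the functional $l_{\psi_B}\circ M_X$, which factors through $(I_1)_{\overline N,\psi_B^{-1}}$ and is $A[X,X^{-1}]$-linear, gives $l_{\psi_B}(M_X(f))=c(X,\omega,B,\psi)\,l_{\psi_B}(f)$; running the same argument with another choice of $f_0$ shows $c$ is independent of the choice. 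The one non-formal ingredient is the freeness of the twisted coinvariants $\left(C_c^\infty(\overline N,A[X,X^{-1}])\right)_{\overline N,\psi_B^{-1}}$ over the coefficient ring; I expect this to be the main point requiring care, since the classical statement over $\mathbb C$ must be upgraded to a general Noetherian base — though with $p$ invertible the usual argument via characteristic functions of small subgroups on which $\psi_B$ is trivial goes through. Everything else is bookkeeping once this and the $G^\Box$-equivariance of $M_X$ are in hand.
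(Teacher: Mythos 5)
Your proposal is correct and follows essentially the same argument as the paper: both hinge on the fact that the $(\overline{N},\psi_B^{-1})$-equivariant functionals on $I_1$ form a free rank-one $A[X,X^{-1}]$-module, so that $l_{\psi_B}\circ M_X$ must be a scalar multiple of $l_{\psi_B}$. The paper establishes this via the contragredient formula $\widetilde{\cInd_1^{\overline{N}}}\cong\Ind_1^{\overline{N}}$ and Frobenius reciprocity, while you compute the twisted coinvariants of $C_c^\infty(\overline{N},A[X,X^{-1}])$ directly — dual phrasings of the same computation.
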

\begin{proof}
	Clearly, $l_{\psi_B}$ is a an element of
	\begin{equation}\label{homkarel}
	\Hom_{\overline{N}}(I_1,\psi_B^{-1}\otimes A[X,X^{-1}]).
	\end{equation}
	Note that $I_1$ is isomorphic to the smooth compactly supported $A[X,X^{-1}]$-valued functions on $\overline{N}$, which is the same as $\cInd_{1}^{\overline{N}}(A[X,X^{-1}])$. Since the contragredient of $\psi_B^{-1}\otimes A[X,X^{-1}]$ is $\psi_B\otimes A[X,X^{-1}]$, by Chapter I., Section 5.11 of \cite{vigneras1996representations} and Frobenius reciprocity we obtain that
	\begin{align*}
	\Hom_{\overline{N}}\left(I_1,\psi_B^{-1}\otimes A[X,X^{-1}]\right)&\cong\Hom_{\overline{N}}\left(\psi_B\otimes A[X,X^{-1}],\widetilde{\cInd_{1}^{\overline{N}}(A[X,X^{-1}])}\right)\\
	&\cong\Hom_{\overline{N}}\left(\psi_B\otimes A[X,X^{-1}],\Ind_{1}^{\overline{N}}(A[X,X^{-1}])\right)\\
	&\cong\Hom_{A[X,X^{-1}]}\left(A[X,X^{-1}],A[X,X^{-1}]\right)\\
	&\cong A[X,X^{-1}].
	\end{align*}
	Now $\overline{M}_X\mid_{I_1}\colon I_1\to I(X^{-1},\overline{\omega}^{-1})$ composed with $l_{\psi_B}$ yields another element of (\ref{homkarel}) and since we can find (similarly to Proposition \ref{exone}) an element $f_0$ of $I_1$ such that $l_{\psi_B}(f_0)=1$ there is a unique element $\hat{c}\in A[X,X^{-1}]$ such that 
	$$l_{\psi_B}(\overline{M}_X(f))=\hat{c}l_{\psi_B}(f)$$
	for all $f\in I_1$. We define $c(X,\omega,B,\psi)\coloneqq\widehat{R}^{-1}\hat{c}\in S^{-1}(A[X,X^{-1}])$ and obtain the functional equation
	$$l_{\psi_B}(M_X(f))=c(X,\omega,B,\psi)l_{\psi_B}(f)$$
	for all $f\in I_1$.
\end{proof}
By the following proposition we see that $c(X,\omega,B,\psi)$ behaves nicely under base change.
\begin{lemma}\label{normalizingbase}
Let $A'$ be a noetherian $W(k)$-algebra and suppose we have an $W(k)$-algebra homomorphism $\alpha\colon A\to A'$. Let $\hat{\alpha}\colon A[[X]][X^{-1}]\to A'[[X]][X^{-1}]$ be the canonical map induced by $\alpha$. Then for all $f\in I(X,\omega)$ we have
$\hat{\alpha}(l_{\psi_B}(f))=l_{\psi_B}(\hat{\alpha}(f))$ and in particular
$$\hat{\alpha}(c(X,\omega,B,\psi)=c(X,\omega\otimes A',B,\psi).$$
\end{lemma}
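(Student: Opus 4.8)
The plan is to reduce both assertions to the definitions of the maps involved and to the observation that all of them are built out of integrals whose integrands consist of values of smooth, compactly supported (modulo the relevant subgroup) functions, so that base change commutes with the integration. First I would unwind the claim $\hat{\alpha}(l_{\psi_B}(f)) = l_{\psi_B}(\hat{\alpha}(f))$ for $f\in I_1$: here $l_{\psi_B}(f) = \int_{\overline N} f(u)\psi_B(u)\,du$ is, by smoothness and compact support of $f|_{\overline N}$, a genuine finite sum $\sum_i \mu_{\overline N}(U_i) f(u_i)\psi_B(u_i)$ over a decomposition of $\operatorname{supp}(f|_{\overline N})$ into cosets of a small enough open compact subgroup on which $f$ and $\psi_B$ are constant. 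Since the measure $\mu_{\overline N}$ is $W(k)$-valued and $\psi_B$ is $W(k)^\times$-valued, applying $\hat\alpha$ to this finite sum and using that $\hat\alpha$ is a ring homomorphism compatible with $\alpha$ on coefficients and with the inclusion $A[X,X^{-1}]\hookrightarrow A[[X]][X^{-1}]$ gives exactly $\sum_i \mu_{\overline N}(U_i)\, (\hat\alpha f)(u_i)\psi_B(u_i) = l_{\psi_B}(\hat\alpha f)$, where $\hat\alpha f$ denotes the image of $f$ in $I(X,\omega\otimes A') = i_P^{G^\Box}(\omega_X\otimes A'\circ\Delta)$ under base change. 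One must just note that the decomposition into cosets valid for $f$ is still valid for $\hat\alpha f$ (indeed the support can only shrink), so the same finite sum computes both sides.

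Next I would extend this from $I_1$ to all of $I(X,\omega)$. Recall that $l_{\psi_B}$ on $I(X,\omega)$ was defined by composing the canonical projection $I(X,\omega)\to I(X,\omega)_{\overline N,\psi_B^{-1}}$, the isomorphism of Equation (\ref{keyiso}) identifying this with $(I_1)_{\overline N,\psi_B^{-1}}$, and the map induced by $l_{\psi_B}$ on $(I_1)_{\overline N,\psi_B^{-1}}$. All three of these constructions are functorial in the coefficient ring: the Jacquet-type coinvariants functor $(\_)_{\overline N,\psi_B^{-1}}$ commutes with $\otimes_A A'$ since $A'$ is flat is \emph{not} needed — rather one uses that it is a quotient by an explicitly described submodule, which base changes on the nose; the isomorphism (\ref{keyiso}) is induced by the inclusion $I_1\hookrightarrow I(X,\omega)$, which is evidently compatible with base change; and the $I_1$ case was handled in the previous paragraph. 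Concatenating these compatibilities yields $\hat\alpha(l_{\psi_B}(f)) = l_{\psi_B}(\hat\alpha f)$ for all $f\in I(X,\omega)$.

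Finally, for the normalizing factor: by definition $c(X,\omega,B,\psi) = \widehat R^{-1}\hat c$ where $\hat c\in A[X,X^{-1}]$ is the unique element with $l_{\psi_B}(\overline M_X(f)) = \hat c\, l_{\psi_B}(f)$ for all $f\in I_1$, and $\widehat R\in S$ is the product defined in Section \ref{sectionintertwining}. The operator $\overline M_X$ is likewise assembled from $R$ (a product of elements $a_w - \theta_w(a_w)$ acting on the relevant induced representation) and the integration map $\Psi(f) = \int_{\overline N} f(\overline n)\,d\overline n$, together with the left-translation map $f\mapsto (g\mapsto f(w_0 g))$; each of these commutes with $\hat\alpha$ by the same finite-sum argument used for $l_{\psi_B}$, together with the fact that the scalars $\theta_w(a_w)$, $\widehat R$, etc., are Laurent polynomials with coefficients $\omega$-values and powers of $p$, all of which map correctly under $\alpha$. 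Hence $\hat\alpha(\overline M_X(f)) = \overline M_X(\hat\alpha f)$ in $I(X^{-1},\overline\omega^{-1}\otimes A')$ and $\hat\alpha(\widehat R)$ is the corresponding product over $A'$. Applying $\hat\alpha$ to $l_{\psi_B}(\overline M_X(f)) = \hat c\, l_{\psi_B}(f)$ and using the two compatibilities already established gives $l_{\psi_B}(\overline M_X(\hat\alpha f)) = \hat\alpha(\hat c)\, l_{\psi_B}(\hat\alpha f)$ for all $f\in I_1$; choosing $f$ with $l_{\psi_B}(f)=1$ (which exists by the argument cited in the preceding proposition, and whose image under $\hat\alpha$ still has $l_{\psi_B}=1$) forces $\hat\alpha(\hat c)$ to equal the analogous constant for $A'$, and dividing by $\hat\alpha(\widehat R)$ yields $\hat\alpha(c(X,\omega,B,\psi)) = c(X,\omega\otimes A',B,\psi)$. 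I expect the only mildly delicate point to be the bookkeeping that the support (modulo $\overline N$, resp. modulo $P$) of a function stays small enough after base change so that one genuinely computes the same finite sum on both sides; this is routine since base change only collapses the target and never enlarges support, but it is worth stating explicitly rather than leaving implicit.
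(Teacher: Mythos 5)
Your proof is correct and is the natural argument; the paper states Lemma \ref{normalizingbase} without proof, and your finite-sum/base-change reasoning supplies exactly the expected justification. The key observations — that $l_{\psi_B}$ on $I_1$ is a finite $A[X,X^{-1}]$-linear combination with coefficients that are measure volumes (powers of $p$) and values of $\psi_B$ (both in $W(k)$, hence fixed by $\alpha$), that the isomorphism $(I_1)_{\overline N,\psi_B^{-1}}\cong I(X,\omega)_{\overline N,\psi_B^{-1}}$ and the coinvariant quotients commute with base change on the nose, and that $\hat\alpha(\hat c)$ is pinned down by evaluating the functional equation on an $f_0\in I_1$ with $l_{\psi_B}(f_0)=1$ whose base change still has $l_{\psi_B}=1$ — are exactly the points one has to verify, and you verify them cleanly, including the slightly delicate remark about supports possibly shrinking but never enlarging under base change.
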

To obtain the normalized gamma factor we need to divide by $c(X,\omega,B,\psi)$ and hence we need to show that it is a unit in $S^{-1}(A[X,X^{-1}])$. 
\begin{proposition}\label{normisunit}
	The normalizing factor $c(X,\omega,B,\psi)$ is a unit in $S^{-1}(A[X,X^{-1}])$.
\end{proposition}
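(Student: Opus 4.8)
Since $c(X,\omega,B,\psi)=\widehat R^{-1}\hat c$ with $\widehat R\in S$ already a unit of $S^{-1}(A[X,X^{-1}])$, it suffices to show that $\hat c\in A[X,X^{-1}]$ is a unit there, where $\hat c$ is characterised by $l_{\psi_B}(\overline M_X(f))=\hat c\,l_{\psi_B}(f)$ for all $f\in I_1$. The plan is to compute $c$ (equivalently $\hat c$) explicitly by adapting Karel's computation \cite{karel1979functional}. Since $M\cong\GL_D(W)$ and $I(X,\omega)=i_P^{G^\Box}(\omega_X\circ\Delta)$ is induced from the rank-one datum $\omega_X\circ\Delta$, the functional $l_{\psi_B}$ is a Jacquet-type integral for a $\GL$-group, and the ratio $l_{\psi_B}(\overline M_X(f))/l_{\psi_B}(f)$ is exactly the associated local coefficient. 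Unwinding the definitions — by the remarks in Section \ref{sectionintertwining}, $\overline M_X$ is built from the integral operator $f\mapsto\bigl(g\mapsto\int_N f(w_0 n g)\,dn\bigr)$, the $w_0$-twist, and the scalar $\widehat R$ — and substituting into $l_{\psi_B}(f)=\int_{\overline N}f(\overline n)\psi_B(\overline n)\,d\overline n$, the ratio reduces, after the usual Gindikin--Karpelevich folding of the resulting integral over $\overline N\times N$, to a finite product of abelian Tate $\gamma$-integrals attached to $\omega$.

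Carrying this out, $c(X,\omega,B,\psi)$ equals, up to a unit of $A[X,X^{-1}]$ (a monomial in $X$ with coefficient a power of $q_E$ times a root of unity), a finite product of Tate $\gamma$-factors $\gamma_E(\chi_i,\psi,X)$, where each $\chi_i\colon E^\times\to A^\times$ is a smooth character built from $\omega$ (its restriction to a subfield, its square, an unramified twist, or its product with the quadratic character of $E/F$, according to the case (I1)--(I3) or the linear case). Each such $\gamma$-factor is a unit of $S^{-1}(A[X,X^{-1}])$: it equals $\epsilon_E(\chi_i,\psi,X)\cdot L_E(\chi_i^{-1},X^{-1})\,L_E(\chi_i,X)^{-1}$ up to a power of $q_E$, the $\epsilon$-factor is a monomial in $X$ with coefficient in $W(k)^\times$ (a Gauss sum; here one uses that $\psi$ is $W(k)^\times$-valued and that $1-\zeta\in W(k)^\times$ for $\zeta$ a $p$-power root of unity, or simply that $\epsilon_E(\chi,\psi,X)\epsilon_E(\chi^{-1},\psi^{-1},X^{-1})$ is a unit), and $L_E(\chi_i,X)$ is either $1$ or $(1-\chi_i(\varpi_E)X)^{-1}$ with $\chi_i(\varpi_E)\in A^\times$, so that $1-\chi_i(\varpi_E)X$ and $X-\chi_i^{-1}(\varpi_E)$ both lie in $S$. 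Since $q_E$, a power of $p$, is a unit of $A$, every factor, and therefore $c$, is a unit of $S^{-1}(A[X,X^{-1}])$.

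The substantive point is to make Karel's computation go through over an arbitrary noetherian $\mathbb Z[1/p]$-algebra and over the class of groups considered here. No analytic input is needed: the restriction of any $f\in I_1$ to $\overline N$ is compactly supported (as in the proof of the previous proposition), the folding of the integral over $\overline N\times N$ reduces everything to finite geometric-series identities valid in $S^{-1}(A[X,X^{-1}])$, and the characters that appear are automatically $A^\times$-valued because $\omega$ is. One must also keep track of the cases (I1)--(I3) and the linear case separately, and the odd orthogonal case is excluded precisely because there is no $B\in\mathfrak n$ of maximal rank, so a different normalisation is required (Section 6 of \cite{lapid2005local}). As an alternative that avoids the explicit formula, one may compose $\overline M_X$ for the datum $(\omega,X)$ with the analogous operator for $(\overline\omega^{-1},X^{-1})$; the resulting $A[X,X^{-1}]$-linear $G^\Box$-endomorphism of $I(X,\omega)$ becomes, after localising at $S$, multiplication by a scalar $\mu(X)$, and evaluating on the surjective functional $l_{\psi_B}$ gives $\mu(X)=\hat c(X,\omega)\hat c(X^{-1},\overline\omega^{-1})$; identifying $\mu(X)$ with the abelian Plancherel-type scalar (a unit) then forces $\hat c$, hence $c$, to be a unit — but this route requires generic irreducibility of $I(X,\omega)$, which makes the direct computation preferable.
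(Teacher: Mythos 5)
Your overall strategy---reduce to an explicit formula for $c(X,\omega,B,\psi)$ as a product of Tate $\gamma$-factors and monomials, then check that each factor is a unit of $S^{-1}(A[X,X^{-1}])$---matches the paper's, which records such a formula as Proposition \ref{normcomp} and then derives the present proposition in two lines. The significant difference is your route to the formula: you propose to redo Karel's Gindikin--Karpelevich computation directly over an arbitrary noetherian $\mathbb Z[1/p]$-algebra, asserting that the folding collapses to geometric-series identities in $S^{-1}(A[X,X^{-1}])$. That is plausible but is exactly the step the paper declines to re-derive. Instead the paper takes the formula as known over $\mathbb C$ (Kudla for case (I3), Kakuhama for the other cases), uses base-change compatibility of both $c$ and the candidate formula $d$ (Lemma \ref{normalizingbase} together with base change of Tate $\gamma$-factors), and propagates the identity to the universal character ring $W(k)[E^\times/\mathcal O_E^{\times,e}]$ by observing that this ring is reduced and every minimal-prime residue field embeds into $\mathbb C$; the general case then follows by applying $f_\omega$. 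That specialization argument is substantially easier to make rigorous than reworking the integral manipulations over a general base, and it is the real content of the paper's proof. For the final unit check you go through the $\epsilon$-factor/$L$-factor decomposition of each $\gamma$; the paper instead invokes the functional equation $\gamma(\tfrac{1}{q_FX},\omega^{-1},\psi)\gamma(X,\omega,\psi)=\omega(-1)$, which exhibits an explicit inverse in one line and avoids any discussion of $L$-factors over general rings---both arguments are fine. Your alternative via composing intertwining operators is, as you note yourself, blocked by the need for generic irreducibility, so the explicit-formula route is indeed the right one.
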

To show the above result we will prove an explicit formula for $c(X,\omega,B,\psi)$. Note that, for example by the approach taken in \cite{bushnell2006local}, it is easy to see that Tate gamma factors can be defined for any $A$-valued smooth character. In more detail, for characters $\omega\colon E^\times\to A^\times$ and $\psi\colon E\to W(k)^\times$ there is a $\gamma(X,\omega,\psi)\in S^{-1}A[X,X^{-1}]$ such that if the residue field $\kappa(\mathfrak p)$ at a prime ideal $\mathfrak p\in\operatorname{Spec}(A)$ is contained in $\mathbb C$ we have that $\gamma(X,\omega,\psi)\mod\mathfrak p\in\mathbb C(X)$ equals the gamma factor defined by Tate. Hence all terms in the following Proposition \ref{normcomp} are well-defined.\\
 Let $w_1,\dotsc,w_n$ be a basis of $W$. Then we set $R=(h(w_i,w_j))_{i,j}\in\operatorname{GL}_n(D)$ and let 
$$\Theta(W,h)=(-1)^n\operatorname{Nrd}(R)\in E^\times/E^{\times2}$$
be the discriminant of the pair $(W,h)$. For any element $\delta\in F^\times/F^{\times2}$ let $\chi_\delta\colon F^\times\to \{\pm1\}$ be given by $x\mapsto(x,\delta)_F$ where $(.,.)_F$ is the Hilbert symbol of $F$. We set $\delta(A)\coloneqq (-1)^n\operatorname{Nrd}_{W}(A)$. Moreover, in the case (I3) we denote by $\eta$ the unique nontrivial character of $F^\times$ with kernel $N_{E/F}(E^\times)$ and set $\vartheta(W,h)=\eta\left((-1)^{n(n-1)/2}\det((h(w_i,w_j))_{i,j})\right)$. We let $e(G)$ be the invariant of Kottwitz, where $e(G)=1$ if $D$ is split and if $D$ is not split
$$e(G)=\begin{cases*}
(-1)^{n(n+1)/2}& \text{(I1) \& (I2) and} $\epsilon=1$,\\
(-1)^{n(n-1)/2}& \text{(I1) \& (I2) and} $\epsilon=-1$,\\
(-1)^n &\text{ in the linear case.}
\end{cases*}$$
Let
\begin{multline*}R(X,\omega,B,\psi)=\\\begin{cases*}
\omega_X(\operatorname{Nrd}_W(B))^{-1}\gamma(Xq_F^{-1/2},\omega\chi_{\delta(A)},\psi)\epsilon(q_F^{-1/2},\chi_{\delta(A)},\psi)^{-1}&\text{(I1) \& (I2) and }$\epsilon=1$,\\
\omega_X(\operatorname{Nrd}_W(B))^{-1}\epsilon(q_F^{-1/2},\chi_{\Theta(W,h)},\psi)&\text{(I1) \& (I2) and }$\epsilon=-1$,\\
\omega_X(\operatorname{Nrd}_W(B))^{-1}\eta(\det(R))&\text{(I3),}\\
\omega_X(\operatorname{Nrd_W(B/2)})^{-2}&\text{ in the linear case.}
\end{cases*}\end{multline*}
\begin{proposition}\label{normcomp} Let
	\begin{enumerate}
		\item for the cases (I1) \& (I2) where $\epsilon=1$
		$$d(X,\omega,B,\psi)=\frac{e(G)X^{2n\operatorname{val}_F(2)}\abs{2}_F^{n(n-1/2)}\omega(4)^{-n}}{\omega(\operatorname{Nrd}(R))\abs{\operatorname{Nrd}(R)}_F^{n+1/2}\gamma(Xq_F^{n-1/2},\omega,\psi)\prod_{i=0}^{n-1}\gamma(X^2q_F^{2i},\omega^2,\psi)}R(X,\omega,B,\psi),$$
		\item for the cases (I1) \& (I2) where $\epsilon=-1$
		$$d(X,\omega,B,\psi)=\frac{e(G)X^{2n\operatorname{val}_F(2)}\abs{2}_F^{n(n-1/2)}\omega(4)^{-n}}{\omega(\operatorname{Nrd}(R))\abs{\operatorname{Nrd}(R)}_F^{n-1/2}\prod_{i=0}^{n-1}\gamma(X^2q_F^{2i},\omega^2,\psi)}\omega_X(\operatorname{Nrd}_W(B))^{-1},$$
		\item in the case (I3)
		$$d(X,\omega,B,\psi)=\frac{\gamma_{\text{Weil}}(\psi^{-1}\circ N_{E/F})^{n(n-1)/2}}{\prod_{r=0}^{n-1}\gamma\left(Yq_F^{n-1-r},(\omega|_{F^\times})\cdot\eta^r,\psi\right)}\omega_X(\operatorname{Nrd}_W(B))^{-1},$$ where $Y$ equals $X^2$ if $E/F$ is unramified and $Y=X$ otherwise. Moreover $\gamma_{\text{Weil}}(\psi^{-1}\circ N_{E/F})$ is the Weil index of the character of second degree $\psi^{-1}\circ N_{E/F}$,
		\item in the linear case (II)
		$$d(X,\omega,B,\psi)=\frac{(-1)^nX^{4n\operatorname{val}_F(2)}\omega(4)^{-2n}}{\prod_{i=0}^{2n-1}\gamma(X^2q_F^i,\omega^2,\psi)}R(X,\omega,B,\psi).$$
	\end{enumerate}
Then we can choose an appropriate Haar measure on $\overline{N}$ such that 
$$c(X,\omega,B,\psi)=d(X,\omega,B,\psi).$$
\end{proposition}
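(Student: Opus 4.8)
The plan is to reduce the identity to a purely local computation on the open Bruhat cell of $G^\Box$, where $M_X$ becomes an explicit twisted Fourier transform, and then to reproduce the inductive evaluation of Karel \cite{karel1979functional} with the scalar $q^{-s}$ replaced by the formal variable $X$ and $\mathbb C$ by the $W(k)$-algebra $A$. First I would use that, by the proposition established just before (\ref{keyiso}), the functor $(\_)_{\overline N,\psi_B^{-1}}$ annihilates every graded piece $I_w$ of the Bruhat filtration with $w\neq 1$, so that both $l_{\psi_B}$ and the functional equation $l_{\psi_B}(M_X(f))=c(X,\omega,B,\psi)\,l_{\psi_B}(f)$ are determined by the restriction of $f$ to $I_1$. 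Identifying $I_1$ with $\cInd_1^{\overline N}(A[X,X^{-1}])$ and, via $u\mapsto u-I$, identifying $\overline N$ with the $A$-module of $D$-linear maps $W^\bigtriangledown\to W^\bigtriangleup$ --- the space of hermitian (resp.\ skew-hermitian) forms on $W$ in case (I), and $\Hom_D(W,W)$ in the linear case --- the Levi $M\cong\GL_D(W)$ acts on this space through $\operatorname{Nrd}_W$, which is what lets the character $\omega_X\circ\Delta$ enter the computation. Using $M_X(f)(g)=\int_N f(w_0 ng)\,dn$ from the last remark of Section \ref{sectionintertwining}, the restriction of $M_X$ to functions supported on $P\overline P$ becomes, up to the twist by $\delta_P^{1/2}(\omega_X\circ\Delta)$, a Fourier transform on $\overline N$ paired against $\psi_B$, so the functional equation for $l_{\psi_B}$ unwinds to
\[
\int_{\overline N}\int_N f(w_0 n u)\,\psi_B(u)\,dn\,du\;=\;c(X,\omega,B,\psi)\int_{\overline N}f(u)\,\psi_B(u)\,du,
\]
which is exactly the Bruhat-cell integral evaluated by Karel.

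To evaluate it I would follow Lemmas 2.5--2.6 and the main theorem of \cite{karel1979functional}: a Gauss-type decomposition of a generic form peels off one rank-one piece at a time, reducing the $n$-dimensional identity to an $(n-1)$-dimensional one together with a rank-one local functional equation. Each step contributes a Tate gamma factor --- of the shapes $\gamma(Xq_F^{\bullet},\omega,\psi)$, $\gamma(X^2q_F^{\bullet},\omega^2,\psi)$, or $\gamma(Yq_F^{\bullet},(\omega|_{F^\times})\eta^r,\psi)$ in case (I3) --- while the quadratic Gauss sums on the diagonal entries produce the $\epsilon$-factors, the quadratic characters $\chi_{\delta(A)}$ and $\chi_{\Theta(W,h)}$, the Weil index $\gamma_{\text{Weil}}(\psi^{-1}\circ N_{E/F})$, and the discriminant terms appearing in the statement; matching the basis-adapted normalization against the $\psi_B$-self-dual one produces the Kottwitz sign $e(G)$. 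The new point is that, once convergence has been removed by the reduction to $I_1$, every step of this chain is an algebraic identity in $S^{-1}A[X,X^{-1}]$: the $A$-valued Tate factors $\gamma(X,\omega,\psi)$ are available (cf.\ \cite{bushnell2006local}) and satisfy Tate's local functional equation over $A$, and $\gamma_{\text{Weil}}(\psi^{-1}\circ N_{E/F})\in W(k)^\times$ makes sense because $\psi$ is $W(k)$-valued and only $p$-power roots of unity occur (note $2\in\cO_F^\times$ when $p\neq 2$ and $2\in A^\times$ when $p=2$, so no $2$-adic or $\ell$-adic obstruction arises). Where a direct check over $A$ is awkward, Lemma \ref{normalizingbase} together with the evident base-change compatibility of $d(X,\omega,B,\psi)$ reduces the equality $c=d$ to a universal coefficient ring --- the $W(k)$-group algebra of a suitable finite quotient of $E^\times$ --- and then to its specializations along $W(k)$-algebra homomorphisms to fields, where Karel's formula applies verbatim.

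Finally, the one scalar left undetermined by this procedure is the ratio between the Haar measure $d\overline n$ fixed in Section \ref{sectionintertwining} and the product of the $\psi_B$-self-dual measures on the rank-one factors occurring above; choosing $d\overline n$ so that this ratio equals $1$ is the ``appropriate Haar measure'' of the statement, and then $c(X,\omega,B,\psi)=d(X,\omega,B,\psi)$ as claimed. I expect the second step to be the main obstacle: faithfully transcribing Karel's archimedean/$p$-adic Fourier computation into the formal, $A$-linear setting while keeping the Gauss-sum and Weil-index bookkeeping, the measure normalizations, and the sign $e(G)$ under control, and in particular treating the ramified quaternionic cases (I2), where $\operatorname{Nrd}$ rather than $\det$ governs the whole calculation.
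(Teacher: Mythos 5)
Your primary route --- transcribing Karel's open-cell Fourier computation into the formal $A$-linear setting and re-deriving the explicit formula for $c(X,\omega,B,\psi)$ term by term --- is genuinely different from what the paper does, and it is also the part you leave unexecuted (you yourself flag it as the main obstacle). The paper never redoes this computation over $A$: it simply quotes the known identity $c=d$ with complex coefficients (Section 3 of \cite{kudla1997degenerate} in case (I3), Proposition 4.2 of \cite{kakuhama2019local} otherwise, which also fix the Haar measure) and then runs exactly the reduction you sketch in your fallback sentence. Concretely, $\omega$ factors through the universal character $\omega_e^{\mathrm{univ}}$ valued in $R_e=W(k)[E^\times/\mathcal O_E^{\times,e}]$; this ring is reduced, and an uncountability argument shows the residue field at each minimal prime has characteristic zero and embeds into $\mathbb C$, so the complex identity plus the base-change compatibility of Lemma \ref{normalizingbase} (and of Tate $\gamma$-factors) forces every coefficient of $c-d$ into every minimal prime of $R_e$, whence $c=d$ over $R_e$ and then over $A$ via $f_\omega$. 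So your fallback \emph{is} the entire proof; to make it airtight you need only the reducedness of $R_e$ and the embedding of its minimal residue fields into $\mathbb C$ --- specializing to arbitrary fields is not enough, since the explicit formulas are only available with complex coefficients. What your primary route would buy, if actually carried out, is a self-contained proof independent of the complex literature, but at the cost of re-justifying the Gauss-sum, Weil-index and measure normalizations over $W(k)$-algebras, which is precisely the delicate bookkeeping the paper's specialization argument is designed to bypass.
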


For any smooth character $\omega\colon E^\times\to A^\times$ there is some $e\in\mathbb N$ such that $\omega$ restricted to 
$$\mathcal O_E^{\times,e}=\{u\in\mathcal O_E^\times\mid u\equiv 1\mod\varpi_E^e\}$$
is trivial. We have a universal character
\begin{align*}
\omega_e^{\text{univ}}\colon E^\times/\mathcal O_E^{\times,e}&\to W(k)\left[E^\times/\mathcal O_E^{\times,e}\right]\\
x&\mapsto x,
\end{align*}
together with a ring homomorphism $f_\omega\colon W(k)\left[E^\times/\mathcal O_E^{\times,e}\right]\to A$ such that $\omega=f_\omega\circ \omega_e^{\text{univ}}$.
\begin{proof}[Proof of Proposition \ref{normcomp}]

First note that if $A$ are the complex numbers then $c(X,\omega,B,\psi)=d(X,\omega,B,\psi)$ holds, see Section 3 of \cite{kudla1997degenerate} for the case (I3), and Proposition 4.2 of \cite{kakuhama2019local} for the other cases. See also these references for the definition of the used Haar measure on $\overline{N}$.\\
We proceed by extending the result to $\omega_e^{\text{univ}}$. Note that $W(k)\left[E^\times/\mathcal O_E^{\times,e}\right]$ is $\ell$-torsion free and reduced. Moreover, it is uncountable and hence for any minimal prime $\mathfrak p$ of $W(k)\left[E^\times/\mathcal O_E^{\times,e}\right]$ the residue field at $\mathfrak p$, which we denote by $\kappa(\mathfrak p)$, has characteristic zero and is at most uncountable. This implies that the algebraic closure $\overline{\kappa(\mathfrak p)}$ can be embedded into $\mathbb C$ and we identify $\overline{\kappa(\mathfrak p)}$ with a subfield of $\mathbb C$. For any $\mathfrak p\in\operatorname{Spec}\left(W(k)\left[E^\times/\mathcal O_E^{\times,e}\right]\right)$ let $\alpha_{\mathfrak p}\colon W(k)\left[E^\times/\mathcal O_E^{\times,e}\right]\to\kappa(\mathfrak p)$ be the canonical map. Hence we obtain
	$$c(X,\omega^{\text{univ}}_e\otimes\kappa(\mathfrak p),B,\psi)=d(X,\omega^{\text{univ}}_e\otimes\kappa(\mathfrak p),B,\psi).$$
	By Lemma \ref{normalizingbase} we have
	$$\hat{\alpha}_{\mathfrak p}(c(X,\omega^{\text{univ}}_e,B,\psi))=c(X,\omega^{\text{univ}}_e\otimes\kappa(\mathfrak p),B,\psi)$$
	and since gamma factors behave well under base change that
	$$\hat{\alpha}_{\mathfrak p}(d(X,\omega^{\text{univ}}_e,B,\psi))=d(X,\omega^{\text{univ}}_e\otimes\kappa(\mathfrak p),B,\psi).$$
	Hence 
	$$c(X,\omega^{\text{univ}}_e,B,\psi)-d(X,\omega^{\text{univ}}_e,B,\psi)$$
	lies in the kernel of $\hat{\alpha}_{\mathfrak p}$ and in particular all coefficients of this Laurent series are elements of $\mathfrak p$ for all minimal prime ideals $\mathfrak p$ of $W(k)\left[E^\times/\mathcal O_E^{\times,e}\right]$. Since this ring is reduced the intersection of all minimal prime ideals is trivial which shows the proposition for $\omega_e^\text{univ}$. The general case follows via base change by the map $f_\omega$.
\end{proof}
\begin{proof}[Proof of Proposition \ref{normisunit}]
Any Tate gamma factor satisfies
$$\gamma\left(\frac{1}{q_FX},\omega^{-1},\psi\right)\gamma(X,\omega,\psi)=\omega(-1),$$ 
which implies that it is invertible in $S^{-1}A[X,X^{-1}]$. All the other factors in the expression of $d(X,\omega,B,\psi)$ are clearly invertible in $S^{-1}A[X,X^{-1}]$ which implies the result.
\end{proof}
\subsection{The normalized gamma factor}
We assume that $(\pi,V)$ admits a central character $z_\pi$.
\begin{definition}
For any smooth $A[G]$-module $(\pi,V)$ satisfying the conditions in Theorems \ref{theorat} and \ref{functionaltheorem} we define the normalized gamma factor associated to $\pi\times\omega$ as 
$$\gamma(Xq_F^{-1/2},\pi\times\omega,\psi)=z_{\pi}(-1)\Gamma(Y,\pi,\omega)c(Y,\omega,B,\psi)^{-1}R(Y,\omega,B,\psi)\in S^{-1}A[X,X^{-1}]$$
where $Y=X^2$ in the case (I3) for $E/F$ unramified and $Y=X$ in all other cases.
\end{definition}
\begin{remark}Note that by Proposition \ref{normcomp} this does not depend on the choice of $B$.
\end{remark}

\bibliography{references}
\bibliographystyle{amsalpha}
\end{document}